\newtheorem{theorem}{Theorem}[section]
\newtheorem{lemma}[theorem]{Lemma}
\theoremstyle{definition}
\newtheorem{definition}[theorem]{Definition}
\newtheorem{example}[theorem]{Example}
\theoremstyle{remark}
\newtheorem{remark}[theorem]{Remark}
\newtheorem{corollary}[theorem]{Corollary}
\newtheorem{notation}[theorem]{Notation}
\newtheorem{intuition}[theorem]{Intuition}
\numberwithin{equation}{section}
\newcommand{\C}{\mathbb{C}} 
\newcommand{\R}{\mathbb{R}} 
\newcommand{\K}{\mathbb{K}} 
\newcommand{\N}{\mathbb{N}}
\newcommand{\eg}{\emph{e.g.}\xspace}
\newcommand{\ie}{\emph{i.e.}\xspace}
\newcommand{\RP}{\mathbb{R}\mathbb{P}}
\newcommand{\CP}{\mathbb{C}\mathbb{P}}
\newcommand{\CsP}{\mathbb{C^*}\mathbb{P}}
\newcommand{\Smz}{\setminus \{0\}}
\newcommand{\RsP}{\mathbb{R^*}\mathbb{P}}
\newcommand{\bbar}{\; | \;}
\newcommand{\Ks}{\mathbb{K^*}}
\newcommand{\Rs}{\mathbb{R^*}}
\newcommand{\Cs}{\mathbb{C^*}}
\newcommand{\As}{{A^*}}
\newcommand{\Lim}{\mathbb{L}} 
\newcommand{\linf}{l_\infty}
\DeclareMathOperator{\hal}{\mathbf{hal}}
\DeclareMathOperator{\gal}{\mathbf{gal}}
\DeclareMathOperator{\inter}{\mathbf{int}}
\DeclareMathOperator{\sh}{{\mathbf{sh}}}
\newcommand*\conj[1]{\overline{#1}}
\newcommand{\norm}[1]{\left\lVert#1\right\rVert}
\newcommand{\I}{\mathbb{I}} 
\newcommand{\e}{\mathrm{e}}
\newcommand{\ii}{\mathrm{i}}
\newcommand{\II}{\mathrm{I}}
\newcommand{\JJ}{\mathrm{J}}
\DeclareMathOperator{\magni}{\mathbf{mag}}
\newcommand{\KsP}{\mathbb{K^*}\mathbb{P}}
\newcommand{\KP}{\mathbb{K}\mathbb{P}}
\DeclareMathOperator{\psh}{\mathbf{psh}}
\DeclarePairedDelimiter\abs{\lvert}{\rvert}
\newcommand{\CsZ}{\mathbb{C^*}\setminus \{0\}}
    \def\page{p.\ }
    \def\Dx{{\Delta x}}
\DeclareMathOperator*{\argmax}{arg\,max}
\newcommand{\A}{\mathbb{A}}
\newcommand{\join}{\textbf{join}} 
\newcommand{\meet}{\textbf{meet}} 
\DeclareMathOperator{\phal}{\mathbf{\mathbb P-hal}}
\newcommand{\normsymb}{\norm{\cdot}}
\DeclareMathOperator{\Id}{Id}
\newcommand{\PCs}{\mathcal{P^*}} 
\newcommand{\LCs}{\mathcal{L^*}} 
\newcommand\Tau{\mathrm{T}}
\begin{document}

\title{First Steps in Non-standard Projective Geometry}

\author{Michael Strobel}
\address{Technical University of Munich, Germany}
\curraddr{}
\email{strobel@ma.tum.de}
\thanks{}

\keywords{non-standard analysis, projective geometry}

\date{\today}

\dedicatory{}

\begin{abstract}
  In this article, we will introduce methods of non-standard analysis into projective geometry. Especially, we will analyze the properties of a projective space over a non-Archimedean field. Non-Archimedean fields contain numbers that are smaller than every real number: the so called ``infinitesimal'' numbers. The theory is well known from non-standard analysis. This enables us to define projective objects that deviate only infinitesimally (in an appropriate metric) from each other. And we show that in most cases operations involving such objects that deviate infinitesimally also experience only infinitesimal change. Another focus will be where this property does not hold true and show that this usually involves discontinuity or degeneration.

  Furthermore, we will explore common projective concepts like projective transformations, cross-ratios and conics in a non-standard setting.
\end{abstract}

\maketitle

\section{Some Basics}
\label{chap:nsa}
An important step towards rigorous treatment of infinitesimal quantities was the article ``\textsc{Eine Erweiterung der Infinitesimalrechnung}'': \cite{schmieden1958erweiterung}. Their construction had the drawback that it wasn't actually a field and therefore contains elements with no multiplicative inverse. Nevertheless, this article was a huge step towards the formal treatment of the infinitesimals.

Eventually a thorough treatment of non-standard analysis was given by \cite{robinson1961non} which overcame the drawbacks of the ansatz by Laugwitz and Schmieden and constructed a proper field with infinitesimal and unlimited members.

We will not admit how to construct the hyperreal or hypercomplex numbers and refer to the literature: \cite{goldblatt,robinson1961non}.

\begin{notation}[$\Rs$ and $\Cs$, \cite{goldblatt} \page 25]
  We denote by $\Rs$ and $\Cs$ the \textit{hyperreal} and the \textit{hypercomplex} numbers.
\end{notation}

\begin{notation}[Enlargement, \cite{goldblatt} \page 28]
  For a set $A \subset \R$ or $A \subset \C$ we denote by $\mathit{\As}$ the \textit{enlargement of $\mathit{A}$}. 
\end{notation}
\begin{remark}
  The enlargement operation adds all non-standard members to a set (\eg infinitesimal and unlimited numbers). We will not got into detail here and refer to \cite{goldblatt}.
\end{remark}
  \begin{intuition}[Extended Function, \cite{goldblatt} \page 30]
    Let $f: A \subset \C \rightarrow \C$ be a function. We denote by $f^*: \As \rightarrow \Cs$ the \textit{extended function $\mathit{f}$}. It holds true that $f(z) = f^*(z) \; \forall z \in A$.
  \end{intuition}
  \begin{intuition}[Transfer, \cite{goldblatt} \page 45]
    \label{nsa_basics:transfer}
      \textit{Universal transfer}: if a property holds true for all real (complex) numbers, then it holds true for all hyperreal (hypercomplex) numbers. 
      \textit{Existential transfer}: if there exists a hyperreal (hypercomplex) number satisfying a certain property, then there exists a real (complex) number with this property.
  \end{intuition}
  \begin{remark}
    We will not give a formal definition of the transfer principle and the star transform (transferring a logical statement from the non-standard world forth and back) and refer to \cite{goldblatt}.
  \end{remark}
\begin{definition}[$\Ks$ sets, partially \cite{goldblatt} \page 50]
    \begin{align*}
    \mathbb{I}_\R &:= \{\epsilon \in \Rs: \, |\epsilon| < |r| \quad \forall r \in \R \},\quad \textit{real infinitesimal numbers}\\
    \mathbb{I}_\C &:= \{\epsilon \in \Cs: \, |\epsilon| < |r| \quad \forall r \in \R \},\quad \textit{complex infinitesimal numbers}\\
        \mathbb{A}_\R&:= \{r^* \in \Rs: \exists n \in \N: \frac{1}{n} < |r^*| <
n\},\quad \textit{real appreciable numbers}\\
        \mathbb{A}_\C&:= \{r^* \in \Cs: \exists n \in \N: \frac{1}{n} < |r^*| <
n\},\quad \textit{complex appreciable numbers}\\
        \mathbb{R_\infty^+}&:= \{H \in \Rs: \, H > r \quad \forall r \in \R
\},\quad \textit{positive unlimited numbers}\\
        \mathbb{R_\infty^-}&:= \{H \in \Rs: \, H < r \quad \forall r \in \R
        \},\quad \textit{negative unlimited numbers}\\
        \mathbb{R_\infty}&:= \R_\infty^+ \cup \R_\infty^- ,\quad \textit{real unlimited numbers}\\
	\mathbb{C_\infty}&:= \Cs \setminus \{ \mathbb{I}_\C \cup \mathbb{A}_\C \}  ,\quad \textit{complex unlimited numbers}\\
        \mathbb{L}_\R&:= \Rs \setminus \R_\infty    ,\quad \textit{real limited numbers}\\
        \mathbb{L}_\C&:= \Cs \setminus \C_\infty    ,\quad \textit{complex limited numbers}
\end{align*}
\end{definition}
\begin{remark}
  We will drop the index $\R$ or $\C$ of $\mathbb{I}_\R, \mathbb{I}_\C, \mathbb{A}_\R,  \mathbb{L}_\R$ or $ \mathbb{L}_\C$ if the context admits. 
\end{remark}
\begin{theorem}[Arithmetics, partially {\cite{goldblatt} \page 50-51}]
    \label{nsa_basics:arith}
   Let $n\in \N$, $\epsilon, \delta$ be infinitesimal, $b,c$ appreciable and $H,K$
   unlimited hypercomplex numbers. Then it holds true:
   \begin{itemize}
       \item Sums
           \begin{itemize}
               \item $\epsilon + \delta$ is infinitesimal
               \item $b + \epsilon$ is appreciable
               \item $b + c$ is limited 
               \item $H + \epsilon$ and $H+b$ are unlimited
           \end{itemize}
       \item Additive inverse
           \begin{itemize}
               \item $-\epsilon$ is infinitesimal
               \item $-b$ is appreciable
               \item $-H$ is unlimited
           \end{itemize}
       \item Products 
           \begin{itemize}
               \item $\epsilon \cdot \delta$ and $\epsilon \cdot b$ are infinitesimal
               \item $b \cdot c$ is appreciable 
               \item $b \cdot H$ and $H \cdot K$ are unlimited 
           \end{itemize}
       \item Reciprocals
           \begin{itemize}
               \item  $\frac{1}{\epsilon}$ is unlimited if $\epsilon \neq 0$
               \item  $\frac{1}{b}$ is appreciable
               \item  $\frac{1}{H}$ is infinitesimal 
           \end{itemize}
       \item Quotiens 
           \begin{itemize}
               \item  $\frac{\epsilon}{b}$, $\frac{\epsilon}{H}$ and $\frac{b}{H}$
                   are infinitesimal
               \item  $\frac{b}{c}$ is appreciable
               \item  $\frac{b}{\epsilon}$, $\frac{H}{\epsilon}$ and
                   $\frac{H}{b}$ are unlimited
           \end{itemize}
       \item Real roots, let $\epsilon \in \I_\R, b \in \A_\R, H \in \R_\infty^+$
           \begin{itemize}
               \item If $ \epsilon > 0, \sqrt[n] \epsilon$ is infinitesimal
               \item If $ b > 0, \sqrt[n] b$ is appreciable
               \item If $ H > 0, \sqrt[n] H$ is unlimited
           \end{itemize}
       \item Indetermined forms
           \begin{itemize}
               \item $\frac{\epsilon}{\delta}, \frac{H}{K}, \epsilon \cdot H, H +
                   K$ are undetermined 
           \end{itemize}
   \end{itemize}
\end{theorem}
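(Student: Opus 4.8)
The plan is to reduce every assertion to a statement about the nonnegative hyperreal modulus of the numbers involved, since all four classes in the preceding definition are defined purely through $\abs{\cdot}$; in particular the complex cases collapse onto the real ones once we observe that $\abs{z}$ is a nonnegative hyperreal for every $z \in \Cs$ and that $z$ is infinitesimal, appreciable or unlimited exactly when $\abs{z}$ is. The two tools I would use throughout are the multiplicativity $\abs{xy} = \abs{x}\abs{y}$ and $\abs{1/x} = 1/\abs{x}$, together with the triangle inequality $\abs{x+y} \le \abs{x} + \abs{y}$ and its reverse form $\abs{x+y} \ge \bigl| \abs{x} - \abs{y} \bigr|$. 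Each of these holds in $\Cs$ by universal transfer of the corresponding first-order statement over $\C$.

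Then I would work through the list category by category. For sums: given real $r > 0$, the split into $r/2$ bounds $\abs{\epsilon+\delta} \le \abs\epsilon + \abs\delta < r$, which yields the infinitesimal case; the unlimited cases $H+\epsilon$ and $H+b$ follow from the reverse triangle inequality, since $\abs{H+\epsilon} \ge \abs H - \abs\epsilon > \abs H - 1$ still exceeds every real bound. Products, reciprocals and quotients are obtained by the same sandwiching of $\abs{\cdot}$ between the real witnesses $1/n$ and $n$ supplied by appreciability and by pushing those inequalities through multiplicativity. The real-root statements follow from the monotonicity of $x \mapsto \sqrt[n]{x}$ on the nonnegative reals, which transfers to $\Rs$, so that the order relations defining each class are preserved under taking roots.

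The genuinely fiddly cases are the appreciable ones, where the modulus must simultaneously be kept bounded above and bounded away from zero: for instance $b+\epsilon$ needs a lower estimate $\abs{b+\epsilon} \ge \abs b - \abs\epsilon > 1/n - \abs\epsilon$ from the reverse inequality and an upper estimate $\abs{b+\epsilon} \le \abs b + \abs\epsilon < n + 1$ from the forward one, after which one exhibits a single integer witness for the resulting two-sided bound. The main obstacle, however, is not any single estimate but the bookkeeping of the quantifiers $\forall r \in \R$ and $\exists n \in \N$ and the disciplined application of transfer when turning a statement about all real numbers into one comparing hyperreal moduli. Finally, the indeterminate forms are of a different nature: rather than proving a classification I would refute the existence of one by exhibiting, for each form, witnesses producing different outcomes — \eg taking $\epsilon = \delta$, $\epsilon = \delta^2$ and $\delta = \epsilon^2$ shows $\epsilon/\delta$ can be appreciable, infinitesimal or unlimited, and analogous choices dispose of $H/K$, $\epsilon \cdot H$ and $H + K$.
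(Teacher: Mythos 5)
The paper does not actually prove this theorem: it is imported from Goldblatt (pp.~50--51), with the word ``partially'' in the attribution signalling only that the complex cases are an addition, and no proof environment follows the statement. Your sketch therefore does more than the paper does, and it is the standard argument: every class in the preceding definition is characterized by the nonnegative hyperreal $\abs{z}$, so the complex statements collapse onto the real ones, and the real ones follow from the triangle inequality, its reverse form, multiplicativity of $\abs{\cdot}$ and of reciprocals, and transfer, with explicit witnesses refuting any classification of the indeterminate forms. This reduction via the modulus is exactly the observation one needs to justify extending Goldblatt's real-case theorem to $\Cs$, so your route is the right one, and your counterexamples for $\frac{\epsilon}{\delta}$ (taking $\epsilon=\delta$, $\epsilon=\delta^2$, $\delta=\epsilon^2$ with $\delta\neq 0$) are the standard ones. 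Two small points to tighten: for $b+\epsilon$ the lower bound $\abs{b}-\abs{\epsilon} > 1/n - \abs{\epsilon}$ only becomes a two-sided integer witness once you invoke $\abs{\epsilon} < 1/(2n)$, which holds precisely because $\epsilon$ is infinitesimal; and it is worth remarking that $b+c$ is claimed only to be \emph{limited}, not appreciable, because $c=-b$ is allowed, so only the upper estimate survives there. With those remarks the sketch is complete and correct.
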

\begin{definition}[Infinitely Close and Limited Distance, partially \cite{goldblatt} \page 52]
    Define for $b,c \in \Ks$ the equivalence
    relation \[ b \simeq c \text{ if and only if }, b-c \in \mathbb I \] and we
    call $b$ is \textit{infinitely close} to $c$. Furthermore, we write\[b
        \sim c \text{ if and only if },  b-c \in \mathbb L \]
        and we say $b$ has \textit{limited distance} to $c$.
\end{definition}

\begin{definition}[Halo and Galaxy, partially \cite{goldblatt} \page 52]
     Define the \textit{halo} of an arbitrary $b \in \Ks$ as 
    \[
        \hal(b) := \{c \in \Ks : b \simeq c\}
    \]
    and the \textit{galaxy} of $b \in \Ks$ as 
    \[
        \gal(b) := \{c \in \Ks : b \sim c\}.
    \]
\end{definition}
\begin{theorem}[Shadow]
    \label{nsa_basics:shadow}
    Every limited hyperreal (hypercomplex) $z^*$ is infinitely close to exactly one real (complex) number. We
    call this the \textit{shadow} of $z^*$ denoted by $\sh(z^*)$.
\end{theorem}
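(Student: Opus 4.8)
The plan is to prove the statement for the hyperreals first and then bootstrap to the hypercomplex case by treating real and imaginary parts separately. For the real case, let $z^* \in \mathbb{L}_\R$ be a limited hyperreal. Since $z^*$ is not unlimited, there is some $n \in \N$ with $|z^*| < n$, so $z^*$ is bounded above and below by ordinary real numbers. I would then consider the set $A := \{x \in \R : x < z^*\}$ as a subset of $\R$. This set is nonempty (it contains $-n$) and bounded above in $\R$ (by $n$), so by the \emph{completeness} of the reals it possesses a least upper bound $r := \sup A \in \R$. My claim is that this $r$ is the sought shadow $\sh(z^*)$.

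To verify $z^* \simeq r$, that is $z^* - r \in \mathbb{I}$, I would show that $|z^* - r| < \epsilon$ for every positive real $\epsilon$. Suppose not. If $z^* \geq r + \epsilon$, then the real number $r + \epsilon/2$ lies in $A$, contradicting that $r$ is an upper bound; if instead $z^* \leq r - \epsilon$, then $r - \epsilon/2$ is an upper bound of $A$ strictly below $r$, contradicting minimality. Either branch yields a contradiction, so $z^* - r$ is smaller in absolute value than every positive real and is therefore infinitesimal. Uniqueness is then immediate: if $r_1, r_2 \in \R$ both satisfy $z^* \simeq r_i$, then $r_1 - r_2 = (r_1 - z^*) + (z^* - r_2)$ is a sum of two infinitesimals, hence infinitesimal by Theorem~\ref{nsa_basics:arith}; but $r_1 - r_2$ is an ordinary real, and the only infinitesimal real is $0$, so $r_1 = r_2$.

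For the hypercomplex case I would write $z^* = a + b\ii$ with $a = \re(z^*)$ and $b = \im(z^*)$ in $\Rs$. Since $|a| \leq |z^*|$ and $|b| \leq |z^*|$ and $z^*$ is limited, both $a$ and $b$ are limited reals, so the real case supplies unique $\alpha, \beta \in \R$ with $a \simeq \alpha$ and $b \simeq \beta$. Then $z^* - (\alpha + \beta\ii) = (a-\alpha) + (b-\beta)\ii$ is infinitesimal by Theorem~\ref{nsa_basics:arith}, giving $\sh(z^*) = \alpha + \beta\ii$, and uniqueness descends from the uniqueness of $\alpha$ and $\beta$ together with the same infinitesimal-difference argument as above.

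The main obstacle is the existence half of the real case: it is precisely here that the completeness (least-upper-bound property) of $\R$ is indispensable, and one must check that the order comparisons between the real elements of $A$ and the hyperreal $z^*$ are legitimate — they are, because $\Rs$ is an ordered field extension of $\R$ and limitedness of $z^*$ guarantees that $A$ is bounded. No appeal to transfer can shortcut this step, since completeness is exactly the property that $\Rs$ fails to satisfy, which is what allows infinitesimals to exist in the first place.
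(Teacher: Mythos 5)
Your proposal is correct and follows essentially the same route as the paper: the paper also obtains the shadow by taking suprema of the lower cuts $\{r \in \R : r < a\}$ and $\{r \in \R : r < b\}$ for the real and imaginary parts and then bounding $|z^* - z|$ by $|a-\alpha| + |b-\beta|$, with uniqueness from the fact that two infinitely close standard numbers coincide. The only cosmetic difference is that you prove the real case in full before applying it componentwise, whereas the paper cites the real case from the literature and repeats the supremum construction inside the complex argument.
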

\begin{proof}
    The real case is proven in \cite{goldblatt} \page 53. We will prove the complex case using the constructions of the real case for both real and imaginary part. Then we use an estimate to conclude.

    Write $z^* = a + \ii \cdot b$, then we have $a,b \in \Lim_\R$ (otherwise $z^*$ would not be limited). Define the following sets:
    \[
        A:=\{r \in \R \bbar r < a\}, \quad B:=\{r \in \R \bbar r < b\}.
    \]
    And set $\alpha:= \sup A$ and $\beta:=\sup B$. By the completeness of $\R$ we know that $\alpha, \beta \in \R$. Let $z:= \alpha + \ii \cdot\beta$. We first show that $z^* - z \in \Lim_\C$: take any $\epsilon > 0$. Since $\alpha$ is an upper bound of $A$ we know $\alpha + \epsilon \notin A \Rightarrow a \leq \alpha + \epsilon$. Furthermore, $\alpha - \epsilon < a$, hence otherwise we have $a \leq \alpha - \epsilon$ which would be a lower upper bound of $A$ which is a contradiction to the construction of $\alpha$. So we have 
    \[
        \alpha - \epsilon < a \leq \alpha + \epsilon \Leftrightarrow |a-\alpha| \leq \epsilon.
    \]
We can argue the same way for $b$ and have $|b-\beta| \leq \epsilon'$ for some $\epsilon' >0$.
Finally, we can conclude 
\[
    |z^*-z| = \sqrt{(a-\alpha)^2 + (b-\beta)^2} \leq |a-\alpha| + |b-\beta| \leq \epsilon + \epsilon' =: \hat \epsilon,
\]
where we used that $\norm{x}_1 \geq \norm{x}_2 \forall x \in \R$ and by universal transfer also for all $x \in \Rs$.
Since this holds for all $\hat \epsilon > 0$ we know that $z^*$ and $z$ are infinitesimal close.

We still have to show the uniqueness of $z$. Assume there is another $z' \in \C$ with the same property. Then $z^* \simeq z'$ and therefore $z \simeq z'$. Since both $z$ and $z'$ are complex numbers this means $z=z'$.
\end{proof}

\begin{lemma}[Complex Shadow, \cite{nsa-dyn-geo}]
    \label{nsa_basics:complex_shadow}
   For $z \in \Lim_\C$ and $z = a + \ii \cdot b$ we have 
   \[
       \sh(z) = \sh(a) + \ii \cdot \sh(b)
   \]
\end{lemma}
\begin{proof}
    Direct consequence of the construction in the proof of \autoref{nsa_basics:shadow}.
\end{proof}
\begin{lemma}[Complex Shadow and Conjugation, \cite{nsa-dyn-geo}]
    \label{nsa_basics:conj_shadow}
    Let $z \in \Lim$, then it holds true that $\sh(\conj{z}) = \conj{\sh(z)}$.
\end{lemma}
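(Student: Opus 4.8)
The plan is to reduce everything to the real case via the Complex Shadow lemma (\autoref{nsa_basics:complex_shadow}) and the behaviour of the real shadow under negation. First I would write $z = a + \ii \cdot b$ with $a,b \in \Lim_\R$, which is legitimate since $z \in \Lim$ forces both the real and imaginary parts to be limited. Then $\conj{z} = a + \ii \cdot (-b)$, and $-b$ is again a limited hyperreal by \autoref{nsa_basics:arith} (the additive inverse of a limited number is limited). Applying \autoref{nsa_basics:complex_shadow} to $\conj{z}$ directly gives
\[
    \sh(\conj{z}) = \sh(a) + \ii \cdot \sh(-b).
\]

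The one small fact I would need to isolate is that the real shadow commutes with negation, i.e. $\sh(-b) = -\sh(b)$. This follows from the defining property of the shadow together with \autoref{nsa_basics:arith}: since $b \simeq \sh(b)$, the difference $b - \sh(b)$ is infinitesimal, hence so is its negative $-(b - \sh(b)) = (-b) - (-\sh(b))$, so $-b \simeq -\sh(b)$. As $-\sh(b)$ is a genuine real number, the uniqueness clause of \autoref{nsa_basics:shadow} identifies it as the shadow of $-b$. Substituting, I obtain $\sh(\conj{z}) = \sh(a) - \ii \cdot \sh(b)$.

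Finally I would compute the right-hand side: since $\sh(a),\sh(b) \in \R$, applying \autoref{nsa_basics:complex_shadow} to $z$ itself gives $\sh(z) = \sh(a) + \ii \cdot \sh(b)$, whence
\[
    \conj{\sh(z)} = \conj{\sh(a) + \ii \cdot \sh(b)} = \sh(a) - \ii \cdot \sh(b).
\]
Comparing the two expressions closes the proof. I do not expect any genuine obstacle here: the result is essentially a bookkeeping identity, and the only step requiring a word of justification is $\sh(-b) = -\sh(b)$, which is immediate from the arithmetic of infinitesimals and the uniqueness of the shadow. The main thing to be careful about is simply invoking \autoref{nsa_basics:complex_shadow} with the correct sign on the imaginary part rather than re-deriving the real-part/imaginary-part splitting from scratch.
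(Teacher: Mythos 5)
Your proof is correct and follows essentially the same route as the paper, which simply applies \autoref{nsa_basics:complex_shadow}; you have just filled in the routine detail that $\sh(-b) = -\sh(b)$, which the paper leaves implicit.
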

\begin{proof}
    Apply \autoref{nsa_basics:complex_shadow}. 
\end{proof}
\begin{definition}[Almost Real, \cite{nsa-dyn-geo}]
    We call a number $z \in \Cs$ \textit{almost real}, if $z$ is infinitely close to a real number. That means there exists a $r \in \R : z \simeq r$. 
\end{definition}
\begin{lemma}[Shadow Properties, \cite{nsa-dyn-geo}]
    \label{nsa_basics:sh_prop}
Let $a,b \in \mathbb L$ and $n \in \mathbb \N$ then
\begin{enumerate}
        \item $\sh(a \pm b) = \sh(a) \pm \sh(b)$
        \item $\sh(a \cdot b) = \sh(a) \cdot \sh(b)$
        \item $\sh(\frac{a}{b}) = \frac{\sh(a)}{\sh(b)}, \text{ if }\sh(b) \neq
            0$
        \item $\sh(b^n )= \sh(b)^n$
        \item $\sh(|b|)= |\sh(b)|$
        \item for $a,b \in \Lim_\R:$ $\text{if } a \leq b \text{ then } \sh(a) \leq \sh(b)$
    \end{enumerate}
\end{lemma}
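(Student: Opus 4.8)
The plan is to prove each of the six shadow properties by the same overarching strategy: reduce the claim about the shadow map to an arithmetic statement about infinitesimals, using the defining characterization that $\sh(x)$ is the unique real (or complex) number infinitely close to $x$. Concretely, for each item I would write $a = \sh(a) + \epsilon$ and $b = \sh(b) + \delta$ where $\epsilon, \delta \in \I$ are infinitesimal (this is exactly what \autoref{nsa_basics:shadow} guarantees for limited numbers), and then show that the proposed real value differs from the relevant expression by an infinitesimal. Once I exhibit a real (or complex) number $s$ with, say, $a \cdot b \simeq s$, uniqueness of the shadow forces $\sh(a \cdot b) = s$.

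First I would handle sums and products, which are immediate from the arithmetic of \autoref{nsa_basics:arith}. For (1), $a \pm b = (\sh(a) \pm \sh(b)) + (\epsilon \pm \delta)$, and since $\epsilon \pm \delta$ is infinitesimal, the real number $\sh(a) \pm \sh(b)$ is infinitely close to $a \pm b$; uniqueness gives the claim. For (2), expanding $a \cdot b = \sh(a)\sh(b) + \sh(a)\delta + \sh(b)\epsilon + \epsilon\delta$ and observing that each of the last three terms is infinitesimal (appreciable-or-limited times infinitesimal is infinitesimal) yields $a \cdot b \simeq \sh(a)\sh(b)$. Item (4) then follows by induction on $n$ from (2), and item (5) for the absolute value follows by the same substitution together with the reverse triangle inequality $\bigl| |a| - |\sh(a)| \bigr| \le |a - \sh(a)| = |\epsilon|$, transferred to $\Ks$.

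For the quotient (3), I would first argue that the hypothesis $\sh(b) \neq 0$ makes $b$ appreciable, hence $\tfrac{1}{b}$ is appreciable and in particular limited, so $\sh(\tfrac{1}{b})$ is defined. The cleanest route is to show $\sh(b) \cdot \sh(\tfrac{1}{b}) = \sh(b \cdot \tfrac1b) = \sh(1) = 1$ using (2), which identifies $\sh(\tfrac1b) = \tfrac{1}{\sh(b)}$, and then apply (2) once more to $a \cdot \tfrac1b$. Item (6) is the order-preservation statement for real limited numbers: if $a \le b$ then $\sh(b) - \sh(a) = \sh(b-a)$ by (1), and $b - a \ge 0$; so it suffices to show that the shadow of a nonnegative limited hyperreal is nonnegative, which follows because a negative real number cannot be infinitely close to a nonnegative hyperreal.

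The only genuinely delicate point is item (6): one must be careful that $\leq$ is not preserved strictly (infinitely close numbers can straddle their common shadow), so the argument must go through the sign of the shadow rather than attempting to transfer the strict inequality directly. This is where I expect the main obstacle, though it is resolved by the simple observation that $\sh$ of a nonnegative number is nonnegative. Everything else is a routine unwinding of \autoref{nsa_basics:arith} via the decomposition $x = \sh(x) + (\text{infinitesimal})$ and the uniqueness clause of \autoref{nsa_basics:shadow}.
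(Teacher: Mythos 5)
Your proof is correct, but it follows a genuinely different route from the paper. The paper's proof is a two-line deferral: it cites Goldblatt (p.~53--54) for the real case and then reduces the complex case to the real one componentwise via \autoref{nsa_basics:complex_shadow}. You instead give a self-contained argument from first principles: decompose $a = \sh(a) + \epsilon$, $b = \sh(b) + \delta$ with $\epsilon, \delta \in \I$ (justified by \autoref{nsa_basics:shadow}), push the infinitesimal error through the arithmetic of \autoref{nsa_basics:arith}, and invoke uniqueness of the shadow to identify the result. This has two advantages over the paper's reduction: it works uniformly in $\Rs$ and $\Cs$ without splitting into real and imaginary parts (which for the product and quotient would otherwise require expanding $(a_1 + \ii a_2)(b_1 + \ii b_2)$ and tracking four real shadows), and it makes visible exactly which arithmetic facts each item depends on. Your handling of the two genuinely non-mechanical points is also right: for (3) you correctly observe that $\sh(b) \neq 0$ forces $b$ to be appreciable, so $\tfrac{1}{b}$ is limited and $\sh(\tfrac1b)$ exists before you compute it via $\sh(b)\sh(\tfrac1b) = \sh(1) = 1$; and for (6) you correctly route the argument through ``the shadow of a nonnegative limited hyperreal is nonnegative'' rather than trying to transfer the inequality directly, which is the one place a naive argument would stumble. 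The only cost of your approach is length: the paper buys brevity by outsourcing the real case, while you pay a page to own the whole argument.
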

\begin{proof}
    For the real case see \cite{goldblatt} \page 53-54. For the complex case we can reduce this to the real case by \autoref{nsa_basics:complex_shadow}.
\end{proof}

\begin{theorem}[{Isomorphism{, partially \cite{goldblatt} \page 54}}]
    \label{nsa_basics:iso}
    The quotient ring $\mathbb{L_\R}/\mathbb{I_\R} \; (\mathbb{L_\C}/\mathbb{I_\C})$ is isomorphic to the field of the
    real (complex) numbers by $\hal(b) \mapsto \sh(b)$. Therefore $\mathbb I$ is a
    maximal ideal of the ring $\mathbb L$.
\end{theorem}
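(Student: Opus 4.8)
The plan is to realize the shadow map as a surjective ring homomorphism whose kernel is exactly $\mathbb{I}$, and then invoke the first isomorphism theorem. First I would confirm the algebraic scaffolding: by the arithmetic rules in \autoref{nsa_basics:arith}, sums and products of limited numbers are limited and $0,1 \in \mathbb{L}$, so $\mathbb{L}$ is a commutative ring with unity; likewise sums of infinitesimals are infinitesimal and an infinitesimal times a limited number is infinitesimal, so $\mathbb{I}$ is an ideal of $\mathbb{L}$. This makes the quotient $\mathbb{L}/\mathbb{I}$ well-defined and, crucially, identifies its cosets: since $b \simeq c \iff b - c \in \mathbb{I}$, the coset $b + \mathbb{I}$ is precisely the halo $\hal(b)$.

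Next I would define $\varphi \colon \mathbb{L} \to \R$ (resp.\ $\C$) by $\varphi(b) := \sh(b)$, which is well-defined and single-valued by the Shadow theorem (\autoref{nsa_basics:shadow}). The homomorphism property is then immediate from \autoref{nsa_basics:sh_prop}, namely $\sh(a+b) = \sh(a)+\sh(b)$ and $\sh(a\cdot b) = \sh(a)\cdot\sh(b)$, together with $\sh(1)=1$. Surjectivity is equally direct, since every real (complex) $r$ is limited and satisfies $r \simeq r$, so that $\sh(r)=r$ and hence $r = \varphi(r)$.

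The heart of the argument is the kernel computation: $b \in \ker\varphi$ means $\sh(b)=0$, i.e.\ $b \simeq 0$, i.e.\ $b - 0 \in \mathbb{I}$; thus $\ker\varphi = \mathbb{I}$. Applying the first isomorphism theorem then yields $\mathbb{L}/\mathbb{I} \cong \R$ (resp.\ $\C$), and tracing the induced map shows that it sends the coset $\hal(b) = b+\mathbb{I}$ to $\sh(b)$, exactly as claimed. Finally, since the quotient is isomorphic to a field, $\mathbb{I}$ is automatically a maximal ideal of $\mathbb{L}$.

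I expect no single step to be a genuine obstacle, since the substantive work was already done in establishing the Shadow theorem and its arithmetic properties. The only points demanding care are bookkeeping rather than depth: verifying that $\mathbb{I}$ genuinely absorbs multiplication by \emph{all} of $\mathbb{L}$ (and not merely by appreciable numbers), and making explicit the identification of quotient cosets with halos, so that the stated map $\hal(b) \mapsto \sh(b)$ is literally the isomorphism produced by the first isomorphism theorem.
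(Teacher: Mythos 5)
Your proposal is correct and is essentially the argument the paper delegates to its reference: the shadow map is a surjective ring homomorphism $\mathbb{L}\to\K$ with kernel $\mathbb{I}$, so the first isomorphism theorem gives the claimed isomorphism $\hal(b)\mapsto\sh(b)$ and the maximality of $\mathbb{I}$ follows because the quotient is a field. The paper's own proof merely cites \cite{goldblatt} for the real case and reduces the complex case to it via $\Cs\cong\Rs\times\Rs$, whereas you supply the full argument uniformly for both cases using \autoref{nsa_basics:shadow} and \autoref{nsa_basics:sh_prop}; you also correctly flag the only points needing care (that $\mathbb{I}$ absorbs multiplication by all limited numbers, and the identification of cosets with halos).
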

\begin{proof}
 For the real case \cite{goldblatt} \page 54. For the complex case, note that $\Cs \simeq \Rs \times \Rs$ and apply the same arguments as for the real case.
\end{proof}
\begin{theorem}[{Continuity{, partially \cite{goldblatt} \page 75}}]
    \label{nsa_basics:conti}
The function $f: \C \rightarrow \C$ is continuous at $c\in \C$, if and only if $f(c) \simeq f(x)$ for all $x
    \in \Cs$ such that $x \simeq c$. In other words if and only if 
    \[
        f(\hal(c)) \subset \hal(f(c)).
    \]
\end{theorem}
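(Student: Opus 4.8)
The plan is to reformulate continuity in its $\epsilon$--$\delta$ form and then move between the standard and non-standard worlds using the transfer principle (\autoref{nsa_basics:transfer}). Recall that $f$ is continuous at $c$ precisely when for every real $\epsilon > 0$ there is a real $\delta > 0$ such that $|x - c| < \delta \Rightarrow |f(x) - f(c)| < \epsilon$ for all $x \in \C$. I would prove the two implications of the biconditional separately, using universal transfer for one direction and existential transfer for the other.

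For the forward direction, assume $f$ is continuous at $c$ and take any $x \in \Cs$ with $x \simeq c$. Fix an arbitrary real $\epsilon > 0$ and choose the corresponding real $\delta > 0$ from continuity. Universal transfer upgrades the statement $\forall y \in \C \; (|y-c| < \delta \Rightarrow |f(y) - f(c)| < \epsilon)$ to the same statement for $f^*$ over all $y \in \Cs$. Since $x \simeq c$, the quantity $|x - c|$ is infinitesimal and hence smaller than the positive real $\delta$, so $|f^*(x) - f(c)| < \epsilon$ (using $f^*(c) = f(c)$). As $\epsilon > 0$ was arbitrary, $|f^*(x) - f(c)|$ is below every positive real, i.e. infinitesimal, which is exactly $f^*(x) \simeq f(c)$. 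This establishes $f(\hal(c)) \subset \hal(f(c))$.

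For the converse, assume $f(\hal(c)) \subset \hal(f(c))$ and fix an arbitrary real $\epsilon > 0$; I must produce a real $\delta > 0$ witnessing the $\epsilon$--$\delta$ condition. The key step is to first find such a $\delta$ in the non-standard world and then pull it back. Choose any positive infinitesimal $\delta$. For every $x \in \Cs$ with $|x - c| < \delta$, the distance $|x - c|$ is itself infinitesimal, so $x \in \hal(c)$; the hypothesis then gives $f^*(x) \in \hal(f(c))$, whence $|f^*(x) - f(c)|$ is infinitesimal and in particular $< \epsilon$. Thus the existential statement $\exists \delta > 0 \; \forall x \; (|x - c| < \delta \Rightarrow |f^*(x) - f(c)| < \epsilon)$ holds over $\Cs$, and existential transfer returns a genuine real $\delta > 0$ with the same property over $\C$, completing the proof of continuity.

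I expect the converse to be the delicate step. The naive impulse is to transfer a universally quantified $\epsilon$--$\delta$ statement directly, but the clean argument instead hinges on exhibiting an explicit infinitesimal $\delta$ to verify an \emph{existential} statement and only then invoking existential transfer. Keeping the quantifier structure straight---universal transfer for the forward implication, existential transfer for the backward one---is where the real care is needed, whereas the underlying arithmetic (an infinitesimal lies below every positive real) is routine given \autoref{nsa_basics:arith}.
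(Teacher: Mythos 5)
Your proof is correct and is precisely the standard transfer argument (universal transfer for the forward implication, an infinitesimal witness plus existential transfer for the converse) that the paper's proof simply delegates to Goldblatt, noting only that the complex case is analogous to the real one. You have in effect written out the argument the paper cites rather than reproduces, so there is nothing to correct.
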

\begin{proof}
    The real case is again in \cite{goldblatt} \page 75. The complex case reads analogously since the definition of continuity is essentially the same only with a different definition of the absolute value.
\end{proof}
\begin{remark}
    This property will be crucial later on to resolve singularities in geometric constructions.
\end{remark}
\begin{lemma}[{Real Limits{, \cite{goldblatt} \page 78}}]
        \label{nsa_basics:real_limits}
    For $c, L \in \R$ and $f$ be defined on $A\subset \R$ then it holds true:
    \begin{align*}
        \lim_{x \rightarrow c} f(x) &= L \Leftrightarrow f(x) \simeq L \quad \forall x \in \As: x \simeq c,\; x \neq c\\
        \lim_{x \rightarrow c^+} f(x) &= L \Leftrightarrow f(x) \simeq L \quad \forall x \in \As: x \simeq c,\; x > c\\
        \lim_{x \rightarrow c^-} f(x) &= L \Leftrightarrow f(x) \simeq L \quad \forall x \in \As: x \simeq c,\; x < c\\
    \end{align*}
\end{lemma}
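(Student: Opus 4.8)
The plan is to derive all three equivalences from the standard $\epsilon$--$\delta$ formulation of the limit together with the transfer principle (\autoref{nsa_basics:transfer}). Since the three cases differ only in the side condition imposed on $x$ (namely $x \neq c$, $x > c$, or $x < c$), I would prove the two-sided case in full and then remark that the one-sided cases are verbatim identical after replacing the punctured neighbourhood $0 < |x-c| < \delta$ by $c < x < c + \delta$ or $c - \delta < x < c$ respectively.

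For the forward direction of the two-sided case, I would start from $\lim_{x \to c} f(x) = L$, which unfolds to the first-order statement that for every real $\epsilon > 0$ there is a real $\delta > 0$ such that $|f(x) - L| < \epsilon$ whenever $x \in A$ and $0 < |x - c| < \delta$. Fixing an arbitrary real $\epsilon > 0$ and the corresponding $\delta$, universal transfer upgrades this implication to all $x \in \As$. Now if $x \simeq c$ and $x \neq c$, then $|x - c|$ is a nonzero infinitesimal, hence $0 < |x - c| < \delta$ for this fixed \emph{real} $\delta$; transfer then yields $|f^*(x) - L| < \epsilon$. As $\epsilon$ ranges over all positive reals, $f^*(x) - L$ is smaller than every positive real in absolute value, i.e. infinitesimal, so $f^*(x) \simeq L$.

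For the converse I would use existential transfer. Fix a real $\epsilon > 0$ and take as a witness any positive infinitesimal $\delta$. For every $x \in \As$ with $0 < |x - c| < \delta$ the quantity $|x-c|$ is bounded by an infinitesimal, so $x \simeq c$ and $x \neq c$; the hypothesis then gives $f^*(x) \simeq L$, whence $|f^*(x) - L|$ is infinitesimal and in particular $< \epsilon$. Thus the \emph{existential} statement ``there is a hyperreal $\delta > 0$ such that $|f^*(x) - L| < \epsilon$ for all $x$ in the punctured $\delta$-neighbourhood of $c$'' is true; by existential transfer there is a \emph{real} $\delta > 0$ with the same property, which is precisely the $\epsilon$--$\delta$ clause for $\lim_{x\to c} f(x) = L$. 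Since $\epsilon$ was arbitrary, the limit equals $L$.

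The only genuinely delicate point is the bookkeeping of the quantifier alternation across transfer: in the forward direction $\delta$ must be the fixed real number supplied by the limit, so that the infinitesimal $x - c$ is comparable to it, whereas in the converse $\delta$ is first chosen infinitesimal in the non-standard world and only then pulled back to a real witness by existential transfer. Once these two roles of $\delta$ are kept apart, the rest is routine, and the one-sided statements require no new idea beyond substituting the appropriate half-neighbourhood.
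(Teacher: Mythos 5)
Your argument is correct and is exactly the standard transfer proof that the paper defers to by citing \cite{goldblatt} (the paper gives no proof of its own for this lemma, and its proof sketch of the companion complex version explicitly points to the same transfer argument). In particular, you handle the one delicate point correctly: in the forward direction $\delta$ is the fixed real witness supplied by the limit, while in the converse an infinitesimal $\delta$ witnesses the existential statement before it is pulled back to a real $\delta$ by existential transfer.
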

\begin{lemma}[Squeezing Limits, \cite{nsa-dyn-geo}]
    \label{nsa_basics:sqeezing_limits}
    Let $A\subset \R, c \in \inter(A)$, $L \in \C$ and let $f: A \setminus \{c \} \rightarrow \C$ be continuous. 
    If there exists a $\Dx \in \I_\R \Smz, \Dx > 0$ such that $f(c+\Dx) \simeq f(c-\Dx) \simeq L$, then the function $f$ can be continuously extended on $A$ with $f(c) = L$.
\end{lemma}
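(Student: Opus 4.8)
The plan is to reduce the extension claim to a statement about the punctured halo of $c$, and then to test how far the single displacement $\Dx$ actually carries us. Define the candidate extension $\tilde f$ on $A$ by $\tilde f(x) = f(x)$ for $x \neq c$ and $\tilde f(c) = L$. By \autoref{nsa_basics:conti}, $\tilde f$ is continuous at $c$ exactly when $\tilde f^*(\hal(c)) \subset \hal(L)$; since $\tilde f^*(c) = L$ trivially lands in $\hal(L)$ and $f$ is already continuous away from $c$, this amounts to verifying
\[
  f^*(x) \simeq L \quad \text{for every } x \in \As \text{ with } x \simeq c,\ x \neq c.
\]
By \autoref{nsa_basics:real_limits} this is in turn equivalent to $\lim_{x \to c} f(x) = L$, and I would attack it one side at a time, treating $x > c$ and $x < c$ symmetrically; this two-sided split is presumably what the name ``squeezing'' refers to.

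Next I would extract the genuine content of the hypothesis. Fix any standard $\eta > 0$ and any standard $\delta > 0$; shrinking $\delta$ we may assume $(c-\delta,\, c+\delta) \subset A$ using $c \in \inter(A)$. Since $\Dx$ is a positive infinitesimal it satisfies $0 < \Dx < \delta$, and $f^*(c \pm \Dx) \simeq L$ forces $\abs{f^*(c \pm \Dx) - L} < \eta$. Hence the internal statement ``there is $t$ with $0 < t < \delta$, $\abs{f^*(c+t)-L} < \eta$ and $\abs{f^*(c-t)-L} < \eta$'' holds in $\Rs$, so by existential transfer (\autoref{nsa_basics:transfer}) there is a \emph{real} $t \in (0,\delta)$ with $\abs{f(c \pm t) - L} < \eta$. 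As $\eta$ and $\delta$ were arbitrary, this shows that $L$ is a two-sided cluster value of $f$ at $c$.

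The main obstacle is precisely the gap between ``cluster value'' and ``limit''. A single infinitesimal displacement samples $f$ along just one hyperreal point on each side of $c$, and the continuity of $f$ (\autoref{nsa_basics:conti}) only propagates halos at standard points of $A \setminus \{c\}$, which is useless inside $\hal(c)$, where continuity is exactly what fails. To reach the required halo condition I would need to rule out oscillation of $f^*$ over the whole of $\hal(c) \setminus \{c\}$, and the single displacement does not supply this: for $f(x) = \sin(1/x)$ with $c = 0$ and $\Dx = 1/(2\pi N)$ for an unlimited hyperinteger $N$, one has $f^*(\pm\Dx) = 0 \simeq L = 0$, yet $f$ has no limit at $0$. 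I therefore expect the argument to close only once the displacement condition is known for every positive infinitesimal, equivalently $f^*(\hal(c)\setminus\{c\}) \subset \hal(L)$ outright, or under an additional structural assumption on $f$ near $c$ (such as one-sided monotonicity, which would let the real cluster value found above be upgraded to a genuine one-sided limit). Pinning down and justifying this missing input is the crux, and the rest of the plan above is then routine bookkeeping through \autoref{nsa_basics:real_limits} and \autoref{nsa_basics:conti}.
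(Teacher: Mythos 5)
Your analysis is correct, and the obstruction you identify is fatal: the lemma as stated is false, and your counterexample witnesses this. Take $f(x) = \sin(1/x)$, $A = \R$, $c = 0$, $L = 0$, and $\Dx = 1/(2\pi N)$ for a positive unlimited hyperinteger $N$; then $\Dx$ is a positive nonzero infinitesimal and transfer gives $f^*(\pm\Dx) = \sin(\pm 2\pi N) = 0 \simeq L$, so every hypothesis holds, yet $\sin(1/x)$ admits no continuous extension at $0$. A single infinitesimal displacement is genuinely weaker than the halo condition $f^*(\hal(c)\setminus\{c\}) \subset \hal(L)$ that \autoref{nsa_basics:real_limits} actually requires, exactly as you say.

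For comparison, the paper's own proof founders on precisely the step you predicted would fail. It sets $H^\circ = \hal(c)\setminus\{c\}$ and asserts that, ``since $f$ is continuous,'' \autoref{nsa_basics:real_limits} yields $h^+ \simeq f(c+\Dx)$ for every $h^+ \in f(H^\circ \cap (A^+)^*)$, and symmetrically on the left. But continuity of $f$ on $A\setminus\{c\}$ is a condition at standard points only and gives no control over $f^*$ on the punctured halo of $c$, while invoking \autoref{nsa_basics:real_limits} in that direction presupposes that the one-sided limit $\lim_{x\to c^+} f(x)$ exists --- which is the very thing being proved. The paper's argument is therefore circular, and your example shows the gap cannot be closed without strengthening the hypotheses: one needs either the displacement condition for \emph{all} positive infinitesimals (equivalently $f^*(\hal(c)\setminus\{c\}) \subset \hal(L)$, i.e.\ existence of the one-sided limits, which the ``squeezing'' then identifies as both equal to $L$), or a structural assumption near $c$ such as monotonicity or uniform continuity, under which S-continuity does propagate across the punctured halo. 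With any such repair, your plan --- define $\tilde f(c) = L$, verify the halo condition, conclude by \autoref{nsa_basics:conti} --- is exactly the paper's intended argument and goes through.
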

\begin{proof}
    Define the sets $A^+ = \{a \in A \bbar a > c\}$, $A^- = \{a \in A \bbar a < c\}$ and $H^\circ:= \hal(c) \setminus \{c\}$. Since $f$ is continuous it holds true by \autoref{nsa_basics:real_limits} 
    for all $h^+ \in f(H^\circ \cap (A^+)^*)$ that $h^+ \simeq f(c+\Dx)$ and furthermore for $h^- \in f(H^\circ \cap (A^-)^*)$ that $h^- \simeq f(c-\Dx)$. As by assumption $f(c+\Dx) \simeq f(c-\Dx)$ it holds true that $h \simeq f(c\pm \Dx)$ for all $h \in f(H^\circ)$. 
    
    Now we are almost done: it holds true that $(A\setminus \{c\})^* = \As \setminus \{c\}$ (see \cite{goldblatt}, \page~29). So if we want to extend $f$ to whole $\As$ we only have to define a value for $f(c)$: we define $f(c):= L$.
    Since $f(c\pm \Dx) \simeq L = f(c)$ it holds true that $f(x) \simeq L$ for all $x \in \hal(c)$. That is equivalent with the continuity of $f$ at $c$ by \autoref{nsa_basics:conti}. 
\end{proof}
\begin{remark}
    The last lemma is very useful since it gives us a recipe to continuously extend a function! If we have a discontinuity of $f$ at some point $c$, it is removable if and only if the shadows of $f(c+\Dx)$ and $f(c-\Dx)$ coincide and we can continue the function with the calculated shadow. 
\end{remark}
\begin{lemma}[{Complex Limits{, partially \cite{goldblatt} \page 78}}]
        \label{nsa_basics:limits}
    Let $c, L \in \C$ and $f$ be defined on $A\subset \C$. Then 
    \[
        \lim_{x \rightarrow c} f(x) = L \Leftrightarrow f(x) \simeq L \quad \forall x \in \As: x \simeq c,\; x \neq c.
    \]
\end{lemma}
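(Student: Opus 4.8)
The plan is to mirror the nonstandard proof of the real case (\autoref{nsa_basics:real_limits}), replacing the real absolute value with the complex modulus throughout; exactly as in the proof of \autoref{nsa_basics:conti}, the only structural change is which function plays the role of the distance, and since the modulus $|\cdot|$ is a genuine function $\C \to \R$ it extends to $\Cs$ and is therefore available for transfer. I would begin by recalling the standard $\varepsilon$--$\delta$ definition of the complex limit: $\lim_{x\to c} f(x) = L$ means that for every real $\varepsilon > 0$ there is a real $\delta > 0$ such that $0 < |x - c| < \delta \Rightarrow |f(x) - L| < \varepsilon$ for all $x \in A$, where now $|\cdot|$ denotes the complex modulus. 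The equivalence is then established in two directions.

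For the forward direction ($\Rightarrow$), fix a real $\varepsilon > 0$ and obtain the corresponding real $\delta > 0$ from the definition. By universal transfer (\autoref{nsa_basics:transfer}) the implication $0 < |x - c| < \delta \Rightarrow |f^*(x) - L| < \varepsilon$ holds for all $x \in \As$. Now take any $x \in \As$ with $x \simeq c$ and $x \neq c$: then $x - c \in \I_\C$, so $|x - c|$ is a positive infinitesimal and in particular $0 < |x - c| < \delta$. Hence $|f^*(x) - L| < \varepsilon$. As $\varepsilon$ was an arbitrary positive real, we conclude $f(x) - L \in \I_\C$, that is, $f(x) \simeq L$.

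For the reverse direction ($\Leftarrow$) I would argue by contradiction. If the limit is not $L$, the negation of the $\varepsilon$--$\delta$ statement yields a fixed real $\varepsilon_0 > 0$ such that for every real $\delta > 0$ there is some $x \in A$ with $0 < |x - c| < \delta$ and $|f(x) - L| \geq \varepsilon_0$. Universal transfer turns this into a statement valid for every hyper-real $\delta > 0$; feeding in a positive infinitesimal $\delta$ produces a witness $x \in \As$ with $0 < |x - c| < \delta$ and $|f^*(x) - L| \geq \varepsilon_0$. The first condition forces $x \simeq c$ with $x \neq c$, so the hypothesis gives $f(x) \simeq L$, contradicting $|f^*(x) - L| \geq \varepsilon_0$ for the real $\varepsilon_0 > 0$.

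The only genuine point requiring care — and the reason this is not a verbatim copy of the real argument — is justifying that transfer applies to the modulus-based definition. I would verify that ``$x \simeq c$'' is equivalent to ``$|x - c|$ is infinitesimal'' under the complex modulus, which is immediate from the definition of $\I_\C$, and that the existential witness produced by transfer indeed lands in $\hal(c) \setminus \{c\}$. Everything else is structurally identical to the real case, precisely the situation already exploited for \autoref{nsa_basics:conti}.
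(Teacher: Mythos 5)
Your proposal is correct and follows essentially the same route as the paper: the paper's proof simply points to the standard transfer argument in the cited source (Goldblatt, pp.~75--78) and notes that the complex version reads analogously, and your two-directional transfer argument (universal transfer of the $\varepsilon$--$\delta$ statement for ``$\Rightarrow$'', transfer of its negation fed a positive infinitesimal $\delta$ for ``$\Leftarrow$'') is precisely that argument written out in full. The only difference is that you supply the details the paper leaves implicit.
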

\begin{proof}
    Essentially, the proof is based on the equivalence of the Weierstrass $\epsilon-\delta$ continuity and the limit definition of continuity. The theorem is stated for the real case in \cite{goldblatt} \page 78. The proof by transfer can be found on \page 75--76. The complex version reads completely analogously.
\end{proof}
The next theorem will give important facts about the topology. It turns out that a set is open if and only if it includes the halo of all its points.

Remember that a set $A$ is open if and only if $\inter A = A$ (see \cite{munkres2000topology} \page 95).
\begin{lemma}[Halo and a Metric, \cite{davis1977applied} \page 89]
  \label{halo_and_matric}
 If $d$ is a standard metric on $X$ then it holds true 
    \[
        \hal(x):= \{ y \in X \bbar d(x,y) \simeq 0 \}.
    \]
   And so we can write $q \simeq p$ for $p,q \in X^*$ if and only if $d(p,q) \simeq 0$.
\end{lemma}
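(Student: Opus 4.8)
The strategy is to fall back on the general topological description of the halo and then exploit the open-ball basis of the metric topology. Recall that for a point $x$ in a topological space the halo is the intersection of the enlargements of all its standard open neighborhoods, $\hal(x) = \bigcap_{U \ni x,\, U\text{ open}} U^*$; on a metric space the sets $B_r(x) := \{z \in X : d(x,z) < r\}$, $r \in \R^+$, form a neighborhood basis at $x$. Hence the claim amounts to the set equality $\hal(x) = \{y \in X^* : d^*(x,y) \simeq 0\}$, which I would establish by proving the two inclusions separately. Throughout, the workhorse is the transfer of the defining property $z \in B_r(x) \Leftrightarrow d(x,z) < r$, together with the fact that an infinitesimal is smaller in absolute value than every positive standard real.

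For the inclusion ``$\subseteq$'', I would use the countable family of standard balls $B_{1/n}(x)$, $n \in \N$. If $y \in \hal(x)$ then $y \in B_{1/n}(x)^*$ for every $n$, so transferring the defining property of the ball gives $d^*(x,y) < 1/n$ for all $n \in \N$. As $d^*$ is non-negative, $d^*(x,y)$ lies below every positive standard bound and is therefore infinitesimal, i.e. $d^*(x,y) \simeq 0$.

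For the reverse inclusion ``$\supseteq$'', assume $d^*(x,y) \simeq 0$ and let $U$ be an arbitrary standard open neighborhood of $x$. By openness there is a standard $\epsilon > 0$ with $B_\epsilon(x) \subseteq U$, and transfer yields $B_\epsilon(x)^* \subseteq U^*$. Since $d^*(x,y)$ is infinitesimal while $\epsilon$ is a positive standard real, $d^*(x,y) < \epsilon$, whence $y \in B_\epsilon(x)^* \subseteq U^*$. As $U$ ranged over all standard open neighborhoods, $y \in \hal(x)$. The two inclusions give the stated description of $\hal(x)$, and the concluding equivalence $q \simeq p \Leftrightarrow d(p,q) \simeq 0$ is merely the unpacking of $q \in \hal(p)$.

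The part I expect to demand the most care is the bookkeeping of transfer: one must apply it to the first-order statements ``$\forall z\,(z \in B_r(x) \leftrightarrow d(x,z) < r)$'' and ``$B_\epsilon(x) \subseteq U$'' with $r,\epsilon,n$ kept \emph{standard}, while remembering that it is the extended metric $d^*$, not $d$, that evaluates the non-standard pair $x,y \in X^*$. If, on the other hand, the paper adopts the metric formula as the very \emph{definition} of the halo on a general metric space, then the lemma reduces to a consistency check against the neighborhood-filter definition, and only one of the inclusions above is actually needed.
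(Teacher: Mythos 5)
Your proof is correct; note, however, that the paper itself gives no proof of this lemma at all---it is stated as a cited fact from Davis (and the ``$:=$'' in the statement suggests the author is really using the metric formula as the working definition of $\hal$ on $X^*$). Your two-inclusion argument, reducing the neighborhood-filter monad to the balls $B_{1/n}(x)$ in one direction and using $B_\epsilon(x)\subseteq U$ plus transfer in the other, is exactly the standard derivation in the cited source, and your closing caveats (keeping $r,\epsilon,n$ standard under transfer, and reading $d$ as $d^*$ on $X^*\times X^*$) address the only delicate points.
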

\begin{theorem}[Topological Non-standard Continuity, \cite{davis1977applied} \page 79]
    \label{nsa_basics:nsa_topo_conti}
   Let $f$ map $X$ to $Y$ where $X,Y$ are both topological spaces. Let $p \in X$. Then $f$ is continuous at $p$ if and only if 
   \[
       q \simeq p \Rightarrow f(q) \simeq f(p).
   \]
\end{theorem}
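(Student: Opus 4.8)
The plan is to unwind both implications through the monad description of the halo in a topological space. For $p \in X$ the relevant notion of halo is $\hal(p) = \bigcap \{ U^* : U \subset X \text{ open}, \, p \in U \}$, and $q \simeq p$ for $q \in X^*$ means precisely $q \in \hal(p)$; this is consistent with \autoref{halo_and_matric} whenever the topology is induced by a metric. I will freely use two transfer facts: first, if $U$ is a standard open set with $p \in U$ and $f(U) \subset V$, then universal transfer gives $f^*(U^*) \subset V^*$; second, for any standard set $V$ the identity $(f^{-1}(V))^* = (f^*)^{-1}(V^*)$ holds.

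For the forward implication, assume $f$ is continuous at $p$ and fix $q \simeq p$. To show $f(q) \simeq f(p)$ it suffices to check $f(q) \in V^*$ for every open $V$ containing $f(p)$. Given such $V$, continuity supplies an open $U \ni p$ with $f(U) \subset V$; transferring, $f^*(U^*) \subset V^*$. Since $q \in \hal(p) \subset U^*$, we get $f(q) = f^*(q) \in V^*$. As $V$ was arbitrary, $f(q) \in \hal(f(p))$, that is $f(q) \simeq f(p)$.

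For the converse I would argue by contradiction. Suppose the halo condition holds but $f$ is discontinuous at $p$: there is an open $V \ni f(p)$ such that $f(U) \not\subset V$ for every open $U \ni p$. For each such $U$ set $S_U = \{ x \in U : f(x) \notin V \}$, which is then nonempty. The family $\{ S_U \}$ has the finite intersection property, because the intersection of finitely many open neighborhoods of $p$ is again one and $S_{U_1} \cap \dots \cap S_{U_n} \supset S_{U_1 \cap \dots \cap U_n}$. Invoking the enlargement (concurrence) property, $\bigcap_U S_U^* \neq \emptyset$; pick $q$ in this intersection. Then $q \in U^*$ for every open $U \ni p$, so $q \in \hal(p)$ and $q \simeq p$, while transfer gives $f^*(q) \notin V^*$, hence $f(q) \notin \hal(f(p))$ and $f(q) \not\simeq f(p)$ — contradicting the hypothesis.

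The main obstacle is exactly this converse step: unlike the metric case, where a single infinitesimal perturbation suffices, a general topology forces us to produce one hyperpoint lying in every $U^*$ yet mapped outside $V^*$. This is where the power of the enlargement is essential — the finite intersection property of the witnessing sets $\{ S_U \}$ must be upgraded to a genuinely nonempty intersection of their star-transforms. If one prefers to avoid concurrence, the same conclusion follows from the characterization $p \in \inter(A) \Leftrightarrow \hal(p) \subset A^*$: the hypothesis yields $\hal(p) \subset (f^{-1}(V))^*$, whence $p \in \inter(f^{-1}(V))$ and $f^{-1}(V)$ is a neighborhood of $p$, giving continuity directly.
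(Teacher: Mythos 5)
Your proof is correct. Note that the paper does not actually prove this theorem --- it is stated with a bare citation to Davis (p.~79) and no proof environment follows --- so you are supplying an argument the paper delegates entirely to the literature; what you give is essentially the standard monad argument found there. The forward direction is a clean transfer of $f(U)\subset V$ to $f^*(U^*)\subset V^*$, and you correctly identify the real content of the converse: in a general topological space one cannot get by with a single infinitesimal perturbation, and the finite intersection property of the witness sets $S_U=\{x\in U: f(x)\notin V\}$ must be upgraded via the enlargement (concurrence) property to produce one hyperpoint in $\bigcap_U S_U^*$. Two small points worth making explicit: the hypothesis that the nonstandard model is an enlargement is genuinely needed here and is nowhere stated in the paper (it is implicit in the ``enlargement'' notation $A^*$); and your closing alternative via $p\in\inter(A)\Leftrightarrow \hal(p)\subset A^*$ does not actually avoid concurrence, since the nontrivial direction of that equivalence is proved by the very same argument --- it merely packages it into a lemma. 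Neither point is a gap, but flagging the saturation hypothesis would make the proof self-contained.
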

\begin{remark}
    The characterization of continuity in the previous theorem reads exactly analogously to the definition of continuity in \autoref{nsa_basics:conti} and this is completely canonical since the previous theorem generalizes infinitesimal proximity to arbitrary topological spaces.
This can be used later on to define continuity directly in the projective space.
\end{remark}
\begin{lemma}[Constant on Open Sets, \cite{nsa-dyn-geo}]
    \label{nsa_basics:const_fct}
    Let $A\subset \C$ open, $c\in A$ and $f: A \rightarrow \C$ be a function on $A$. 
    \begin{enumerate}
\item If $f$ is constant on $\hal(c)$ then there is an open set $A' \subset A$ with $c \in A'$ such that $f$ is constant on $A'$.
\item If $f$ is constant on on $A$ then $f$ is also constant on $\As$.
    \end{enumerate}
\end{lemma}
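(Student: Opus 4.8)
The plan is to handle the two parts separately: part (2) is essentially a one-line transfer argument, while part (1) is where the actual work lies.

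For part (2), I would assume $f \equiv k$ on $A$ for some $k \in \C$ and observe that ``$\forall x \in A:\, f(x) = k$'' is a first-order statement with parameters $A$, $f$ and $k$. Universal transfer (\autoref{nsa_basics:transfer}) then immediately yields ``$\forall x \in \As:\, f^*(x) = k$'', that is, $f^*$ is constant on $\As$. Nothing more is needed here.

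For part (1), set $L := f(c)$; the hypothesis says $f^*(x) = L$ for every $x \in \hal(c)$, and this is meaningful because $A$ open implies it contains the halo of each of its points, so $\hal(c) \subset \As$ and $f^*$ is defined there. The key idea is to encode local constancy as an $\epsilon$-$\delta$ statement and then transfer a \emph{witness} rather than a universally quantified formula. Concretely, I would introduce the real predicate $\psi(\delta)$ given by
\[
  \delta > 0 \;\wedge\; \bigl(\forall x \in A:\, |x - c| < \delta \;\Rightarrow\; f(x) = L\bigr),
\]
and check that its star-transform $\psi^*$ holds for any positive infinitesimal $\delta$: indeed, if $x \in \As$ satisfies $|x - c| < \delta$ with $\delta \in \I_\R$, then $x \simeq c$, so $x \in \hal(c)$ and $f^*(x) = L$ by assumption. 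Hence some hyperreal $\delta > 0$ satisfies $\psi^*$.

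Existential transfer (\autoref{nsa_basics:transfer}) then returns a genuine real $\delta_0 > 0$ satisfying $\psi$, which means $f(x) = L$ for all $x \in A$ with $|x-c| < \delta_0$. Taking $A' := A \cap \{x \in \C : |x - c| < \delta_0\}$ produces an open set containing $c$ on which $f$ is constant, which would finish the proof. I expect the only subtle point to be the bookkeeping in setting up $\psi$: the inner quantifier must range over $A$ (so its transform ranges over $\As$, where the infinitesimal witness lives), and $L = f(c)$ must be carried along as a fixed parameter so that $\psi$ is truly a statement about the single unknown $\delta$. Once that is arranged, verifying $\psi^*$ at an infinitesimal $\delta$ and invoking existential transfer are both routine.
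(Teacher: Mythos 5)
Your proposal is correct and follows essentially the same route as the paper: part (2) by universal transfer of $\forall x \in A:\, f(x)=d$, and part (1) by verifying the $\delta$-statement at a positive infinitesimal and then applying existential transfer to obtain a real $\delta_0$. If anything, your version is slightly more careful than the paper's (you quantify the inner variable over $A$/$\As$ rather than all of $\Cs$, and you explicitly justify why the starred statement holds at an infinitesimal $\delta$), but the underlying argument is identical.
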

\begin{proof}
We proof 1: if $f$ is constant on $\hal(c)$ the following statement is true:
   \[
       \exists \delta \in (\R^+)^*: \forall x \in \Cs:\left(|x-c| < \delta \Rightarrow f(x) = f(c)\right).
   \]
   Now we apply existential transfer and find the next statement also true:
   \[
       \exists \delta \in \R^+: \forall x \in \C:\left(|x-c| < \delta \Rightarrow f(x) = f(c))\right.
   \]
   Which says that there is an open set $A':=  \{x \in A \bbar |x-c| < \delta \}$ such that $f$ is constant on $A'$.

   Now we proof 2: if $f$ is constant on $A$, then the following sentence is true: there is $d \in \C$ such that
   \[ \forall x \in A: f(x) = d 
   \]
   Then apply universal transfer:
   \[ \forall x \in \As: f(x) = d
   \]
which is the claim.
\end{proof}
\begin{remark}
    As the converse argument this also implies that if a function $f$ does not vanish on the $\hal(c)$ then there is an $\epsilon >0, \epsilon \in \R$ such that the function $f$ also does not vanish on $B_\epsilon(c)$.
\end{remark}

\section{Non-standard Projective Geometry}
\label{chap:nsa-pg}
We will now introduce the usage of hyperreal and hypercomplex numbers in projective geometry. Surprisingly there is some preliminary work by A.\ Leitner \cite{leitner2015limits}. Leitner introduces projective space over the hyperreal numbers (not considering complex space) and uses this to proof properties of the diagonal Cartan subgroup of $\operatorname{SL}_n(\R)$.

  We also published an article on non-standard methods dynamic projective geometry: \cite{nsa-dyn-geo}. There some of the following concepts were already introduced.
\begin{definition}[$\KsP^d$]
Let $d \in \N$. We define $\KsP^d$ analogously to $\KP^d$:
    \[
        \KsP^d \; := \;  \dfrac{\Ks^d \setminus \{0\}}{\Ks \setminus \{0\}}
    \]
    where $\Ks$ denote the hyper (real or complex) numbers.
\end{definition}
\begin{definition}[Standard and Non-standard Projective Geometry]
    If we talk about projective geometry in $\KP^d$, we will refer to it as \textit{standard projective geometry} and for results in the extended field $\KsP^d$ we will refer to as \textit{non-standard projective geometry}.
\end{definition}
\begin{definition}[Similary, \cite{nsa-dyn-geo}]
    If two vectors $x,y \in \KP^2$ represent the same equivalence class, we will write $x \sim_{\KP^2} y$ and analogously for the enlarged space $\KsP^2$ we will write $x \sim_{\KsP^2} y$.
    \end{definition}
    \begin{remark}
       If the situation allows, we will sometimes also drop the ``$\sim$'' relation symbol and just write ``$=$''. 
    \end{remark}
    \begin{definition}[Representatives, \cite{nsa-dyn-geo}]
            \label{nsa_pg:representative}
            We call a representative $x$ of $[x] \in \KsP^d$ 
        \begin{itemize}
        \item    \textit{limited}, if and only if for all components $x_i$ of $x$ it holds true that $x_i \in \mathbb L$. 
        \item \textit{infinitesimal}, if and only if for all
    components $x_i$ of $x$ it holds true that $x_i \in \mathbb I$. 
\item    \textit{appreciable}, if and only if for all components $x_i$ of $x$ it holds true that $x_i \in \mathbb L$ and there is at least one component $x_{i'}$ which is appreciable. 
    \item \textit{unlimited}, if and only if for at least one
        $x_i$ of $x$ it holds true that $x_i \in \mathbb \K_\infty$. 
    \end{itemize}
\end{definition}
\begin{remark}
        We will denote by $\norm{x}$ the Euclidean norm $\norm x_2$ of a vector in $x \in \K^d$ or $x \in \Ks^d$, if not stated otherwise. 
\end{remark}
\begin{lemma}[Representatives and Norms]
	\label{nsa_pg:rep_norms}
	Let $x \in \CsP^d$, then it holds true:
    \begin{itemize}
	    \item  $x$ is limited, if and only if $\norm{x} \in \mathbb L$. 
	    \item  $x$ is infinitesimal, if and only if $\norm{x} \in\mathbb  I$. 
	    \item  $x$ is appreciable, if and only if $\norm{x} \in\mathbb  A$. 
	    \item  $x$ is unlimited, if and only if $\norm{x} \in\mathbb  \K_\infty$. 
    \end{itemize}
\end{lemma}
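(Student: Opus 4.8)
The plan is to reduce all four equivalences to the comparison of the Euclidean norm $\norm{\cdot}_2 = \norm{\cdot}$ with the maximum norm $\norm{x}_\infty = \max_i \abs{x_i}$, and then to read off each statement from the arithmetic rules of \autoref{nsa_basics:arith}. The two inequalities I rely on are
\[
    \max_i \abs{x_i} \;\leq\; \norm{x} \;\leq\; \sqrt{d}\,\max_i \abs{x_i},
\]
which hold for every vector in $\C^d$ and hence, by universal transfer (\autoref{nsa_basics:transfer}), for every representative $x \in \Cs^d \Smz$. Since $d \in \N$ is fixed, the factor $\sqrt{d}$ is a standard appreciable number, so multiplying by it leaves the arithmetic class (infinitesimal, appreciable, limited, unlimited) of a nonnegative quantity unchanged. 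I also use the componentwise bound $\abs{x_i} \leq \norm{x}$, which follows from the left inequality.

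With these estimates in hand the limited and infinitesimal cases are immediate. For the forward direction of the limited case, if every $x_i \in \mathbb L$ then $\max_i \abs{x_i}$ is limited, and the right inequality gives $\norm{x} \in \mathbb L$; conversely $\abs{x_i} \leq \norm{x}$ forces each component to be limited as soon as $\norm{x}$ is. The infinitesimal case runs identically, using that a finite maximum of infinitesimals is infinitesimal and that an appreciable multiple of an infinitesimal is infinitesimal. The unlimited case is just the contrapositive packaging of the same bounds: a single unlimited component $x_{i'}$ (i.e. $\abs{x_{i'}} \in \K_\infty$) makes $\norm{x} \geq \abs{x_{i'}}$ unlimited, and if $\norm{x}$ is unlimited then by the right inequality not all $\abs{x_i}$ can be limited.

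The one case in which the cases must interact is appreciability, because ``appreciable'' is the conjunction ``all components limited, and at least one appreciable.'' For the forward direction I would combine the limited case (giving $\norm{x} \in \mathbb L$) with the lower bound $\norm{x} \geq \abs{x_{i'}}$ for the appreciable component $x_{i'}$, so that $\norm{x}$ is limited but not infinitesimal, hence appreciable. For the converse I would argue that $\norm{x} \in \mathbb A \subset \mathbb L$ makes all components limited by the limited case, and that if every component were infinitesimal then the infinitesimal case would force $\norm{x} \in \mathbb I$, contradicting $\norm{x} \in \mathbb A$; thus some limited component is non-infinitesimal, i.e.\ appreciable. I expect this converse of the appreciable equivalence, where the all-infinitesimal configuration must be excluded by appealing back to the infinitesimal case, to be the only place requiring genuine care; every other step is a direct application of the two norm inequalities together with \autoref{nsa_basics:arith}.
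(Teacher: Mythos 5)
Your proof is correct, and it is the natural fleshing-out of what the paper dismisses with a one-word proof (``Obvious''): the norm-equivalence bounds $\max_i\abs{x_i}\leq\norm{x}\leq\sqrt{d}\,\max_i\abs{x_i}$ transferred to $\Cs^d$, combined with the arithmetic rules, are exactly what is needed, and you correctly identify the converse of the appreciable case as the only step requiring the interaction of the limited and infinitesimal cases.
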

\begin{proof}
    Obvious.
\end{proof}
\begin{definition}[Points and Lines in $\KsP^2$]
    We define the sets of point $\mathcal{P}_\Ks$ and lines $\mathcal{L}_\Ks$ of $\KsP^2$ by
    \begin{align*}
        \mathcal{P}_\Ks \; := \;  \dfrac{\Ks^3 \setminus \{0\}}{\Ks \setminus
        \{0\}}, \quad 
        \mathcal{L}_\Ks \; := \;  \dfrac{\Ks^3 \setminus \{0\}}{\Ks \setminus \{0\}}.
    \end{align*}
\end{definition}
This definition is is the logical consequence of the introduction of hyperreal and hypercomplex numbers. But this also entails modifications to the standard operations in projective geometry. For example the cross product of two points defines the connecting line, but the definition will not be independent of the representative any more (as we will see later in \autoref{expl:geokalkuele}). This is not as bad as it seems on the first sight: it turns out that if representatives are of reasonable length (meaning appreciable), a lot of properties transfer from standard geometry to the non-standard version. Therefore, we will introduce adapted definitions for the basic operations in projective geometry.

Firstly, we want to measure whether two numbers are ``approximately'' of the same size. We already know the concept of galaxies but this is not that useful in our case. We rather would like to have something which measures if the quotient of two numbers yields an appreciable number, \ie whether the shadow of the quotient is in $\C$ and not $0$. Hence, we define the magnitude of complex numbers:
\begin{definition}[Magnitude]
    \label{nsa_basics:def_magni}
   We define the \textit{mangitude} of a number $r \in \Cs \setminus \{0\}$ by
    \[
    \magni (r) := \{ s \in \CsZ \; | \; \exists A \in \mathbb A: r = A \cdot s 
    \}.
    \]
\end{definition}
\begin{remark}
    A.\ Leitner has a similar concept and calls two numbers of same order if their quotient is appreciable \cite{leitner2015limits}. 
\end{remark}
\begin{example}[Magnitude Examples]
	Let $x \in \C \setminus \{0\}, H \in \C_\infty, \epsilon \in \I\setminus \{0\}$
	\begin{align*}
		&5\cdot H \notin \gal(H) \quad (5\cdot H - H = 4 \cdot H \notin \Lim)\\
        &5\cdot H \in \magni (H) \quad (\frac{5\cdot H}{H} = 5 \in \A)\\
        &\epsilon^2 \notin \magni( \epsilon) \quad
        (\frac{\epsilon^2}{\epsilon} = \epsilon \notin \A) \\
		& \Lim \setminus \{0\} = \magni (x) 
	\end{align*}
\end{example}
\begin{lemma}[Appreciable and Magnitude]
    \label{nsa_pg:apr_magni}
    Let  $r \in \Cs \setminus \{0\}$ then 
    \[
    s \in \magni(r) \Leftrightarrow
    \sh(\frac{r}{s}) \in \C \setminus \{0\}\Leftrightarrow
    \sh(\frac{s}{r}) \in \C \setminus \{0\}.
    \]
\end{lemma}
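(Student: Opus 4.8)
The plan is to reduce the whole triple equivalence to a single characterization of appreciable numbers — namely that a nonzero hypercomplex number $z$ is appreciable if and only if its shadow exists and is nonzero — and then apply that characterization once to $z = r/s$ and once to $z = s/r$. First I would unpack the definition of $\magni$. Since $s \neq 0$, the condition $s \in \magni(r)$, which asks for an appreciable $A$ with $r = A \cdot s$, is equivalent to $r/s \in \mathbb{A}$, because such an $A$ is forced to equal $r/s$. Thus the statement reduces to proving $r/s \in \mathbb{A} \Leftrightarrow \sh(r/s) \in \C \Smz$ together with the analogous claim for $s/r$.

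The crux is the following claim, which I would prove separately: for every $z \in \Cs \Smz$ one has $z \in \mathbb{A} \Leftrightarrow \sh(z) \in \C \Smz$. For the forward direction, appreciability gives $\frac{1}{n} < |z| < n$ for some $n \in \N$; the upper bound shows $z$ is limited, so $\sh(z)$ is defined by \autoref{nsa_basics:shadow}, while the lower bound together with the order-preservation of the shadow and the identity $\sh(|z|) = |\sh(z)|$ from \autoref{nsa_basics:sh_prop} yields $|\sh(z)| \geq \frac{1}{n} > 0$, so $\sh(z) \neq 0$. For the converse, if $z$ is limited with $c := \sh(z) \neq 0$, then $z \simeq c$, so $|z| \simeq |c|$ with $|c|$ a positive appreciable real; choosing $n$ with $\frac{1}{n} < |c| < n$ (with room to spare) forces $\frac{1}{n} < |z| < n$ as well, since $|z|$ and $|c|$ differ only infinitesimally, whence $z \in \mathbb{A}$. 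Applying the claim to $z = r/s$ gives the first equivalence directly.

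Finally, for the second equivalence I would exploit that $s/r$ is the reciprocal of $r/s$. By the reciprocal rules in \autoref{nsa_basics:arith}, a nonzero number is appreciable exactly when its reciprocal is, so $r/s \in \mathbb{A} \Leftrightarrow s/r \in \mathbb{A}$; chaining this through the characterization just established yields $\sh(r/s) \in \C \Smz \Leftrightarrow \sh(s/r) \in \C \Smz$. Alternatively one can argue directly that $\sh(s/r) = \frac{1}{\sh(r/s)}$ whenever $\sh(r/s) \neq 0$, using the quotient rule for shadows in \autoref{nsa_basics:sh_prop}.

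The main obstacle is genuinely just the middle claim characterizing appreciability by a nonzero shadow; the rest is bookkeeping. The only care needed there is to invoke the absolute-value and monotonicity properties of the shadow correctly, both of which are supplied by \autoref{nsa_basics:sh_prop}, so that the passage between the bounds $\frac{1}{n} < |z| < n$ and the statement $\sh(z) \neq 0$ is rigorous in both directions.
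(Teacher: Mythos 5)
Your proof is correct and follows essentially the same route as the paper: identify the forced witness $A = r/s$, characterize appreciability by having a nonzero shadow, and pass to $s/r$ via the reciprocal rule for appreciable numbers. In fact you are more complete than the paper's own proof, which only spells out the forward direction from $s \in \magni(r)$ and leaves the converse implications implicit.
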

\begin{proof}
Let $s \in \magni (r)$ then there is an $A \in \A$ such that $r = A \cdot s$,
so $A = \frac{r}{s}$. By definition there is a $n \in \N$ such that
$\frac{1}{n} < |A| < n$ \ie there is a $x \in \C \setminus \{0\}$ such that
$\sh(A) = x$. Furthermore the inverse $A^{-1} \in \A$ since $\frac{1}{n} <
    |\frac{1}{A}| < n$ just by definition of the reciprocal.
\end{proof}
\begin{lemma}[Magnitude Lemmas]
    \label{sing:magn_lemma}
    Let $r, \lambda \in \Cs \setminus \{0\}$ and $\sigma \in \mathbb A$, then
\begin{align}
 \magni (r) &= \magni(\sigma \cdot r) \label{sing:magn_lemma:1},\\
 \magni (r)^{-1}&:= \{ \frac 1 s \bbar s \in \magni(r) \}  = \magni(r^{-1}) \label{sing:magn_lemma:2},\\
 \magni (\lambda \cdot r) &= \lambda \cdot \magni(r)   \label{sing:magn_lemma:3}.
\end{align}    
\end{lemma}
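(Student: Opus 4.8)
The plan is to reduce all three identities to the clean characterization provided by \autoref{nsa_pg:apr_magni}, namely that for $r,s \in \Cs \setminus \{0\}$ one has $s \in \magni(r)$ if and only if $\sh(r/s) \in \C \setminus \{0\}$. Observing that a limited number with nonzero shadow is exactly an appreciable number, this is the same as saying that the quotient $r/s$ — equivalently, by the symmetric half of \autoref{nsa_pg:apr_magni}, the quotient $s/r$ — is appreciable. Once membership is phrased purely as ``$r/s$ is appreciable'', each claim collapses to a one-line computation with the closure properties of appreciable numbers from \autoref{nsa_basics:arith}.

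For the first identity, I would fix $s \in \Cs \setminus \{0\}$ and note that $s \in \magni(\sigma r)$ iff $\sigma r/s = \sigma \cdot (r/s)$ is appreciable. Since $\sigma$ and $\sigma^{-1}$ are both appreciable, and appreciables are closed under products and quotients, the number $\sigma \cdot (r/s)$ is appreciable precisely when $r/s$ is: the forward direction divides by $\sigma$, the backward direction multiplies by it. Hence $s \in \magni(\sigma r) \Leftrightarrow s \in \magni(r)$, so the two sets coincide.

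For the second identity I would use the version $s \in \magni(r) \Leftrightarrow s/r$ appreciable. Writing $t = 1/s$, membership $t \in \magni(r^{-1})$ unwinds to $r^{-1}/t = s/r$ being appreciable, which is exactly the condition $s \in \magni(r)$. Thus the involution $s \mapsto 1/s$ of $\Cs \setminus \{0\}$ carries $\magni(r)$ onto $\magni(r^{-1})$, giving $\magni(r)^{-1} = \magni(r^{-1})$. For the third identity, the map $s \mapsto \lambda s$ is a bijection of $\Cs \setminus \{0\}$ because $\lambda \neq 0$, so it suffices to check $\lambda s \in \magni(\lambda r) \Leftrightarrow s \in \magni(r)$; this is immediate since $(\lambda r)/(\lambda s) = r/s$ leaves the defining appreciability condition literally unchanged, whence $\magni(\lambda r) = \lambda \cdot \magni(r)$.

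I expect no genuine obstacle: the content is entirely the appreciable arithmetic of \autoref{nsa_basics:arith} together with \autoref{nsa_pg:apr_magni}. The only points demanding care are bookkeeping ones — keeping every element in $\Cs \setminus \{0\}$ so that all quotients are defined, and pinning down the equivalence ``$\sh(x) \in \C \setminus \{0\}$ iff $x$ is appreciable'', which is precisely where the shadow–appreciable correspondence does the work.
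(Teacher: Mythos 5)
Your proposal is correct and follows essentially the same route as the paper: both arguments reduce membership $s \in \magni(r)$ to appreciability of the quotient $r/s$ (you via \autoref{nsa_pg:apr_magni}, the paper directly from \autoref{nsa_basics:def_magni}) and then invoke closure of $\mathbb A$ under products and reciprocals from \autoref{nsa_basics:arith}. If anything, your treatment of \eqref{sing:magn_lemma:1} is slightly more careful, since you explicitly supply the reverse direction by multiplying with $\sigma^{-1}$, which the paper's chain of equivalences leaves implicit.
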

\begin{proof}
    (\ref{sing:magn_lemma:1}): $s \in \magni (r) \Leftrightarrow \exists A \in
    \A: r = A \cdot s \Leftrightarrow  \sigma \cdot r = \sigma \cdot A
    \cdot s $. Since $A, \sigma \in \A$ their product $A':= \sigma \cdot A \in \A$ 
    too, which means $\sigma \cdot r = A' \cdot s$. Therefore $s \in \magni (\sigma \cdot
    r) \Leftrightarrow s \in \magni (r)$.\\
    (\ref{sing:magn_lemma:2}): Let $s \in \magni (r^{-1}) \Leftrightarrow \exists  
    A \in \A: r^{-1} = A \cdot s \Leftrightarrow r = A^{-1}s^{-1} \Leftrightarrow s^{-1} \in \magni (r)$.\\
    (\ref{sing:magn_lemma:3}): \begin{align*}
        \lambda \cdot \magni (r) &= \lambda \cdot \{s \in \Cs \setminus\{0\} \; | \; \exists A \in \A: r = s \cdot A \} \\ 
        &= \{  \underbrace{\lambda \cdot s}_{=:s'} \in \Cs \setminus\{0\} \; | \; \exists A \in \A: r = s \cdot A \} \\ 
&= \{  s' \in \Cs \setminus\{0\} \; | \; \exists A \in \A: r = s' \cdot
    \lambda^{-1} \cdot A \} \\ 
&= \{  s' \in \Cs \setminus\{0\} \; | \; \exists A \in \A: \lambda \cdot r = s' \cdot A \} \\ 
    &= \magni (\lambda \cdot r)
    \end{align*}
\end{proof}
\begin{lemma}[Magnitude and Limited Numbers]
    \label{nsa_pg:mag_lim}
    Let $r, \lambda \in \Cs \setminus \{0\}$ and $s \in \magni (r)$ then
    \[
        s \in \magni (\lambda \cdot r) \Leftrightarrow \lambda \in \A
    \]
\end{lemma}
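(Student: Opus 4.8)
The plan is to translate both magnitude-membership statements into statements about appreciability of quotients by means of \autoref{nsa_pg:apr_magni}, and then to finish with the product and reciprocal rules for appreciable numbers from \autoref{nsa_basics:arith}. Throughout, every quotient is well-defined because $r$, $\lambda$ and $s$ are nonzero.

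First I would rewrite the hypothesis $s \in \magni(r)$. By \autoref{nsa_pg:apr_magni} this is equivalent to $\sh(r/s) \in \C \setminus \{0\}$, that is, the quotient $r/s$ is appreciable. Applying the same lemma to the target membership, $s \in \magni(\lambda \cdot r)$ is equivalent to $(\lambda \cdot r)/s = \lambda \cdot (r/s)$ being appreciable.

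Thus the claim reduces to the following purely arithmetic statement: assuming $r/s \in \A$, the product $\lambda \cdot (r/s)$ lies in $\A$ if and only if $\lambda \in \A$. For the implication ``$\Leftarrow$'', if $\lambda \in \A$ then $\lambda \cdot (r/s)$ is a product of two appreciable numbers and hence appreciable by \autoref{nsa_basics:arith}. For ``$\Rightarrow$'', I would use that $r/s \in \A$ forces its reciprocal $s/r \in \A$ (again \autoref{nsa_basics:arith}); then writing $\lambda = \bigl(\lambda \cdot (r/s)\bigr) \cdot (s/r)$ exhibits $\lambda$ as a product of two appreciable numbers, so $\lambda \in \A$.

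I expect no real obstacle here; the argument is routine bookkeeping. The only subtlety worth flagging is that one must know $r/s$ is genuinely appreciable rather than merely limited, so that its reciprocal is again appreciable—this is precisely what \autoref{nsa_pg:apr_magni} delivers, since it characterizes $s \in \magni(r)$ by $\sh(r/s)$ being a \emph{nonzero} complex number. An alternative route via \eqref{sing:magn_lemma:3}, rewriting $\magni(\lambda \cdot r) = \lambda \cdot \magni(r)$ and reducing to whether $s/\lambda$ lies in $\magni(r)$ given that $s$ does, would work equally well but is slightly more roundabout.
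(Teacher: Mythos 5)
Your proof is correct. The reduction is the same one the paper uses --- both arguments boil down to the fact that $s\in\magni(r)$ makes $r/s$ appreciable and then ask when $\lambda\cdot(r/s)$ is appreciable --- but you finish the forward direction differently. The paper unwinds the definition to get $r/s = A/\lambda$ with $A\in\A$ and then argues by contradiction, running through the trichotomy $\lambda\in\I$ versus $\lambda\in\C_\infty$ and invoking \autoref{nsa_basics:arith} to see that each case would make $A/\lambda$ unlimited or infinitesimal, contradicting $r/s\in\A$. You instead give a direct proof: since $r/s\in\A$ forces $s/r\in\A$, the identity $\lambda = \bigl(\lambda\cdot(r/s)\bigr)\cdot(s/r)$ exhibits $\lambda$ as a product of appreciable numbers. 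Your route via \autoref{nsa_pg:apr_magni} is slightly cleaner in that it avoids the case split and the appeal to the partition of $\Cs\Smz$ into $\I$, $\A$ and $\C_\infty$; the paper's version makes the failure modes (infinitesimal or unlimited $\lambda$) more visible. For the backward direction the paper simply cites \eqref{sing:magn_lemma:1}, which is the same product-of-appreciables observation you make explicitly. No gaps.
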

\begin{proof}
    ``$\Leftarrow$'' we already showed in \autoref{sing:magn_lemma}.\\
    ``$\Rightarrow$'' 
    Let $s
    \in \magni (\lambda \cdot r) \Leftrightarrow \exists A \in \A: \lambda \cdot
    r = A \cdot s \Leftrightarrow \frac{r}{s} = \frac{A}{\lambda}$. If now
    $\lambda \in \I$ the quotient $\frac{A}{\lambda} \in \C_\infty$ and if
    $\lambda \in \C_\infty$ we have $\frac{A}{\lambda} \in \I$ (both by
    \autoref{nsa_basics:arith}). This is a contradiction to the assumption.
    
    The only case left
    is the appreciable one: since the reciprocal of a appreciable number
    is appreciable, also \autoref{nsa_basics:arith}, the claim follows
    immediately.
\end{proof}
\begin{theorem}[Limited and Appreciable Entries]
    \label{nsa_pg:lim_entry}
    Let $x = (x_1,\ldots,x_d)^T \in \CsP^d$ and let $\lambda \in \magni(\norm{x})$ then
    all entries of $\lambda^{-1} x$ are limited and at least one is appreciable. 
\end{theorem}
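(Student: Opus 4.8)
The plan is to reduce the statement to the norm characterisation of appreciable representatives in \autoref{nsa_pg:rep_norms}, since ``all entries limited and at least one appreciable'' is exactly the definition of an appreciable representative (\autoref{nsa_pg:representative}). The whole argument then hinges on showing that the single scalar $\norm{\lambda^{-1}x}$ is appreciable.

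First I would unpack the hypothesis $\lambda \in \magni(\norm{x})$. By the definition of magnitude there is an appreciable $A \in \A$ with $\norm{x} = A \cdot \lambda$, equivalently (via \autoref{nsa_pg:apr_magni}) the quotient $\norm{x}/\lambda = A$ is appreciable. Note that $x$ is a representative, hence $x \neq 0$, so $\norm{x}$ is a positive hyperreal and every division below is well defined.

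Next I would compute the Euclidean norm of the scaled representative. Homogeneity of $\norm{\cdot}_2$ holds on $\C^d$ and transfers to $\Cs^d$ by universal transfer (\autoref{nsa_basics:transfer}), so
\[
    \norm{\lambda^{-1}x} = |\lambda|^{-1}\,\norm{x} = \left| \frac{\norm{x}}{\lambda} \right| = |A|.
\]
Since $A$ is appreciable, so is $|A|$ (directly from the definition of $\A$: if $\frac{1}{n} < |A| < n$ then the same bounds hold for $|A|$). Hence $\norm{\lambda^{-1}x} \in \A$. I would then invoke \autoref{nsa_pg:rep_norms} for the representative $\lambda^{-1}x$: as its norm is appreciable, $\lambda^{-1}x$ is an appreciable representative, which by \autoref{nsa_pg:representative} means precisely that all its entries lie in $\Lim$ and at least one lies in $\A$ — the claim.

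There is no serious obstacle here; the only thing to handle with care is the norm computation, namely that $\norm{x}$ is a nonnegative hyperreal so that $|A\lambda| = |A|\,|\lambda|$ yields $\norm{x}/|\lambda| = |A|$, together with the availability of homogeneity via transfer. Should one prefer a self-contained argument avoiding \autoref{nsa_pg:rep_norms}, the same conclusion follows entrywise from the standard inequalities $\norm{x}_\infty \leq \norm{x}_2 \leq \sqrt{d}\,\norm{x}_\infty$ (again transferred to $\Cs^d$): each entry satisfies $|\lambda^{-1}x_i| \leq \norm{x}/|\lambda| = |A|$, so every entry is limited, while a maximal-modulus entry $x_{i_0}$ satisfies $|\lambda^{-1}x_{i_0}| \geq |A|/\sqrt{d}$, which is appreciable.
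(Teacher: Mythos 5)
Your proposal is correct, and it takes a noticeably more streamlined route than the paper. The paper's proof works entrywise: it bounds the maximal-modulus entry $\hat x$ via $|\hat x| \leq \norm{x} = |A|\,|\lambda|$ to get limitedness of every component, and then establishes appreciability of at least one component by contradiction — assuming all components of $\lambda^{-1}x$ are infinitesimal, normalizing by $\norm{x\lambda^{-1}}$, and deriving $|A| = |\epsilon|$ against $A \in \A$. You instead compute the single scalar $\norm{\lambda^{-1}x} = \norm{x}/|\lambda| = |A| \in \A$ and delegate the conclusion to \autoref{nsa_pg:rep_norms}, which characterizes appreciable representatives by their norm. This is legitimate (that lemma precedes the theorem and the paper itself invokes it inside this very proof), and it buys brevity plus an explicit accounting of where the real content sits — namely in the ``norm appreciable $\Rightarrow$ entries limited with one appreciable'' direction of \autoref{nsa_pg:rep_norms}, whose proof the paper dismisses as obvious. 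Your self-contained fallback, bounding each entry by $|A|$ from above and the maximal entry by $|A|/\sqrt{d}$ from below via the equivalence of the maximum and Euclidean norms, essentially reconstructs the paper's entrywise argument while replacing its proof by contradiction with a direct lower bound; either version is sound.
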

\begin{proof}
   Let $\hat x$ be the absolute maximum of $x$. Then we know that by \autoref{nsa_basics:transfer} that the Cauchy-Schwarz inequality holds true: \[|\hat x| =
   \sqrt{\hat x\cdot\conj{\hat x}} \leq \sqrt{x_1\cdot\conj{x_1} + \ldots + x_d\cdot\conj{x_d}} = \norm{x}.\] Since $\lambda \in
   \magni(\norm{x})$ there is an $A \in \mathbb A$ such that $\norm{x} = |A| \cdot
   |\lambda|.$ So $|\hat x \lambda^{-1}| \leq \norm{x} |\lambda|^{-1} = |A|$ which
   means the $\hat x \lambda^{-1}$ has to be limited. Since $\hat x$ was the absolute maximum all other entries have to be limited, too.

   Now we proof that at least one entry has to be appreciable. By the arguments above we know that $\hat x \lambda^{-1}$ is limited, therefore we
   have to show that $\hat x \lambda^{-1}$ can't be infinitesimal.  Assume the
   converse. Then all entries of the vector $x \lambda^{-1}$ have to be
   infinitesimal by \autoref{nsa_pg:rep_norms}. And so the norm $\norm{x \lambda^{-1}}$ is also infinitesimal
   equal to an $\epsilon \in \mathbb I$.
   Then consider 
   \begin{align*}
       1 &= \norm{\frac{x \lambda^{-1}}{\norm{x \lambda^{-1}}}} = \norm{\frac{x
       \lambda^{-1}}{\norm{x } |\lambda^{-1}|}} = \norm{\frac{x
       \lambda^{-1}}{|A||\lambda||\lambda^{-1}|}} = \norm{\frac{x
       \lambda^{-1}}{|A|}}  \\
   \end{align*}
   Therefore it holds true $|A| = \norm{x \lambda^{-1}} = \abs{\epsilon}$ which is a
 contradiction to $A \in \mathbb A$.
\end{proof}
The last theorem motivates the definition of an appreciable representative:
\begin{definition}[Appreciable Representative, \cite{nsa-dyn-geo}]
    For an element $x = (x_1,\ldots,x_d)^T \in \CsP^d$ a \textit{appreciable
    representative} is defined as
    \[
        x_{\mathbb A} := \frac{1}{\lambda} \begin{pmatrix}
           x_1 \\ \vdots \\ x_d 
   \end{pmatrix} 
    \]
    with $\lambda \in \magni (\norm{x})$. Then all entries of $x_{\mathbb A}$
    are limited and at least one entry is appreciable.
\end{definition}
\begin{remark}
        While the notion of the magnitude might look a bit cumbersome, at first it is actually a fairly easy concept. The magnitude of the norm of a vector is just a way to ensure that we rescale the vector in a way that it is appreciable, meaning that there is a shadow in $\C^{d+1} \Smz$ which yield a viable representative of a equivalence class in $\CP^d$. 
        
        It would be possible to divide by the Euclidean norm of the vector and have a vector of length one, but the magnitude gives you more flexibility. Still it is sometimes preferable to divide, for example, by the absolute maximum of a vector, which is in the magnitude of the Euclidean norm.  
\end{remark}
\begin{lemma}[Equivalence of Euclidean and Maximum Norm]
    \label{nsa_pg:equiv_norm}
    For $z \in \Cs^d$ the Euclidean and the maximum norm are standard equivalent. This means that there are $a, A, b, B \in \R^+ \Smz$ such that
    \[
        a \norm{z}_\infty \leq \norm{z}_2 \leq  A \norm{z}_\infty
    \]
    and 
    \[
        b \norm{z}_2 \leq \norm{z}_\infty \leq  B \norm{z}_2
    \]
\end{lemma}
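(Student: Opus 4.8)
The plan is to prove the two chains of inequalities for ordinary complex vectors $z \in \C^d$ with explicit \emph{real} constants, and then to lift these statements verbatim to all $z \in \Cs^d$ by the universal transfer principle (\autoref{nsa_basics:transfer}). Because the constants we produce are genuine real numbers lying in $\R^+ \Smz$, the transferred inequalities are exactly the four statements the lemma demands.

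First I would record the familiar elementary estimates on $\C^d$. Writing $z = (z_1,\ldots,z_d)$, the componentwise bound $\max_i |z_i|^2 \leq \sum_i |z_i|^2 \leq d \cdot \max_i |z_i|^2$ yields, after taking square roots,
\[
    \norm{z}_\infty \leq \norm{z}_2 \leq \sqrt{d}\,\norm{z}_\infty,
\]
so the first chain holds with $a = 1$ and $A = \sqrt{d}$. Rearranging the same two bounds gives
\[
    \tfrac{1}{\sqrt d}\,\norm{z}_2 \leq \norm{z}_\infty \leq \norm{z}_2,
\]
so the second chain holds with $b = 1/\sqrt d$ and $B = 1$. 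All four constants $1,\ \sqrt d,\ 1/\sqrt d$ lie in $\R^+ \Smz$, as required.

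The only remaining step is to pass from $\C^d$ to $\Cs^d$. The maximum norm $\norm{\cdot}_\infty$ and the Euclidean norm $\norm{\cdot}_2$ are standard functions $\C^d \to \R$, and on $\Cs^d$ we read them as their extended functions, which agree with the originals on standard points. With the real constants fixed above, each chain is a first-order sentence of the form ``for all $z \in \C^d$, the following conjunction of $\leq$-relations between the values of these functions holds''. Applying universal transfer converts each such sentence into the corresponding assertion ranging over all $z \in \Cs^d$, which is precisely the claim.

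The argument is almost entirely bookkeeping; the one point deserving care is making the sentence to be transferred genuinely first-order. This means treating $\norm{\cdot}_2$ and $\norm{\cdot}_\infty$ as extensions of fixed function symbols and \emph{freezing} the constants $\sqrt d$ and $1/\sqrt d$ as specific reals before transfer, rather than quantifying over them. Once this is done the transfer principle applies directly, and no non-standard phenomena (infinitesimal or unlimited entries) can interfere, since the inequalities are simply preserved under the extension of the order relation on $\Rs$.
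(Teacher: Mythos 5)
Your proposal is correct and matches the paper's (very terse) proof in substance: the paper simply says the claim follows "analogously to the direct proofs in $\C^d$" with a citation to Horn--Johnson, which is exactly the standard-constants-plus-transfer argument you spell out. Your version is just a more explicit rendering of the same route, and it correctly secures the one point the paper's accompanying remark cares about, namely that $a, A, b, B$ are genuine reals fixed before transfer rather than hyperreals.
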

\begin{proof}
    Analogously to the direct proofs in $\C^d$, see for example \cite{horn2013matrix}.
\end{proof}
\begin{remark}
   The important thing here is that $a, A, b, B \in \R^+ \Smz$
are real and not hyperreal.
\end{remark}
\begin{lemma}[Maximum Norm and Magnitude]
    For a vector $z \in \Cs^d \Smz$ it holds true that $\norm{z}_\infty \in \magni(\norm{z})$.  
\end{lemma}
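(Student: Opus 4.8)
The plan is to produce directly the appreciable factor demanded by the definition of $\magni$. Since $z \in \Cs^d \Smz$, we have $\norm{z}_\infty > 0$, so I may set $A := \norm{z}_2 / \norm{z}_\infty \in \CsZ$. By construction $\norm{z}_2 = A \cdot \norm{z}_\infty$, so the entire claim $\norm{z}_\infty \in \magni(\norm{z}_2)$ collapses to the single assertion that this particular $A$ lies in $\mathbb A$, that is, that $A$ is appreciable. (Equivalently, one could route this through \autoref{nsa_pg:apr_magni} and instead check that $\sh(\norm{z}_2/\norm{z}_\infty) \in \C \setminus \{0\}$, but exhibiting $A$ explicitly is cleaner.)

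To verify that $A$ is appreciable I would invoke the standard norm equivalence of \autoref{nsa_pg:equiv_norm}, which supplies constants $a, A' \in \R^+ \Smz$ with $a\,\norm{z}_\infty \leq \norm{z}_2 \leq A'\,\norm{z}_\infty$ for every $z \in \Cs^d$. Dividing through by the positive quantity $\norm{z}_\infty$ gives $a \leq A \leq A'$. Because $a$ and $A'$ are genuine positive reals, there exists an $n \in \N$ with $\tfrac{1}{n} < a$ and $A' < n$, hence $\tfrac{1}{n} < |A| < n$, which is exactly the definitional condition for $A \in \mathbb A$. This completes the reduction.

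The one point I would flag as the crux — and the only thing that is not a one-line division — is that the constants $a, A'$ furnished by the norm equivalence must be \emph{real}, not merely hyperreal. This is precisely the content of the remark following \autoref{nsa_pg:equiv_norm}: had the equivalence only yielded hyperreal bounds, those could themselves be infinitesimal or unlimited, and the squeeze $a \leq A \leq A'$ would fail to trap $A$ in the appreciable range. Since equivalence of the Euclidean and maximum norms on $\C^d$ is a first-order statement with fixed real constants, universal transfer (\autoref{nsa_basics:transfer}) carries it to $\Cs^d$ with the \emph{same} real constants, so the argument goes through and no further obstacle remains.
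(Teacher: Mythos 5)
Your proof is correct and follows essentially the same route as the paper's: both exhibit the quotient $\norm{z}_2/\norm{z}_\infty$ as the required appreciable factor and trap it between the real constants supplied by \autoref{nsa_pg:equiv_norm}. Your explicit remark that the constants must be genuinely real (not hyperreal) for the squeeze to force appreciability is exactly the point the paper flags in the remark following that lemma, so nothing is missing.
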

\begin{proof}
    By \autoref{nsa_pg:equiv_norm} there are $a,A \in \R^+ \Smz $ such that 
    \begin{align*}
        &a \norm{z}_\infty \leq \norm{z}_2 \leq  A \norm{z}_\infty\\
        \Leftrightarrow \; &a \leq \frac{\norm{z}_2}{\norm{z}_\infty} \leq  A 
    \end{align*}
    which means that with $c := \frac{\norm{z}_2}{\norm{z}_\infty}$ it holds true 
    \[
        c = \frac{\norm{z}_2}{\norm{z}_\infty}\\
 \Leftrightarrow \; c \cdot {\norm{z}_\infty} = {\norm{z}_2}
 \Rightarrow  \norm{z}_\infty \in \magni( {\norm{z}_2})
    \]
\end{proof}
\begin{remark}
    So instead of normalizing with the Euclidean norm we can also use the absolute maximum value for the projective shadow. Even more conveniently, we can use the $\argmax z_i$ of a hypercomplex vector $z \in \Cs^d \Smz$ since the absolute value function and the $\argmax$ differ only by an appreciable phase value, \ie $\argmax z_i = \e^{\phi} \cdot |z_i|$ for an $\phi \in [0,2 \pi)$.
\end{remark}
\begin{lemma}[Different Appreciable Representatives, \cite{nsa-dyn-geo}]
    \label{nsa_pg:diff_repr}
  Let $x \in \CsP^d$, $x_\A$ and $\hat x_\A$ be appreciable representatives. Then
  there is a unique $c \in \A$ such that $\hat x_\A = c \cdot x_\A$.
\end{lemma}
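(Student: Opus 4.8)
The plan is to split the claim into its two halves: first produce \emph{some} scalar $c \in \Cs \setminus \{0\}$ relating the two representatives, and then show that this scalar is forced to be appreciable. For the first half, note that $x_\A$ and $\hat x_\A$ are by hypothesis representatives of the same class $x \in \CsP^d$, hence the same element of the quotient $(\Cs^d \setminus \{0\})/(\Cs \setminus \{0\})$; by the very definition of $\CsP^d$ two such representatives differ by a scalar $c \in \Cs \setminus \{0\}$, so $\hat x_\A = c \cdot x_\A$. Uniqueness is then immediate: an appreciable representative has an appreciable, hence nonzero, entry, so $x_\A \neq 0$, and the scalar relating two fixed nonzero vectors is pinned down by any single nonzero coordinate.

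The core of the argument is to exclude $c \in \I$ and $c \in \C_\infty$, after which $c \in \A$ follows, since $\Cs \setminus \{0\}$ is the disjoint union of the nonzero infinitesimals, the appreciable numbers, and the unlimited numbers. To rule out $c \in \I$, I would read off $\hat x_{\A,i} = c \cdot x_{\A,i}$ componentwise: each $x_{\A,i}$ is limited, so it is either infinitesimal or appreciable, and in both subcases \autoref{nsa_basics:arith} gives that $c \cdot x_{\A,i}$ is infinitesimal. Then every entry of $\hat x_\A$ would be infinitesimal, contradicting that an appreciable representative must have an appreciable entry. To rule out $c \in \C_\infty$, I would instead pick an appreciable entry $x_{\A,j}$ of $x_\A$ (which exists by definition of an appreciable representative); by \autoref{nsa_basics:arith} the product of an unlimited and an appreciable number is unlimited, so $\hat x_{\A,j} = c \cdot x_{\A,j}$ would be unlimited, contradicting that all entries of $\hat x_\A$ are limited. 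Hence $c$ is neither infinitesimal nor unlimited, and being nonzero it is appreciable.

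I do not expect any genuine obstacle here; the only point requiring care is to invoke the \emph{correct} asymmetric properties of an appreciable representative in each case — the infinitesimal case exploits ``all entries limited'' together with ``infinitesimal times limited is infinitesimal,'' while the unlimited case exploits ``at least one entry appreciable'' together with ``unlimited times appreciable is unlimited.'' As an alternative I could argue purely through the magnitude machinery: writing $x_\A = \lambda^{-1} x$ and $\hat x_\A = \mu^{-1} x$ with $\lambda,\mu \in \magni(\norm{x})$ gives $c = \lambda/\mu$, and since $\magni(\norm{x}) = \magni(\mu)$ by \autoref{sing:magn_lemma} one gets $\lambda \in \magni(\mu)$, so \autoref{nsa_pg:apr_magni} yields $\sh(\lambda/\mu) \in \C \setminus \{0\}$, i.e. $c \in \A$. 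I would present the direct case analysis as the main proof, since it is more transparent and avoids relying on the explicit form of the representatives.
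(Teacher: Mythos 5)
Your proposal is correct and follows essentially the same route as the paper: obtain $c \in \Cs \setminus \{0\}$ from the quotient definition, then exclude $c \in \I$ and $c \in \C_\infty$ by case analysis using that an appreciable representative has all entries limited and at least one appreciable. In fact your handling of the unlimited case is slightly more careful than the paper's (which multiplies $c$ by a merely ``limited'' entry, where one should pick the appreciable entry to avoid the indeterminate form $\epsilon \cdot H$), but this is a refinement of the same argument rather than a different approach.
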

\begin{proof}
    By the definition of equivalence classes in $\CsP^d$, we know that there is
    a unique $c \in \Cs \setminus \{0\}$ such that $\hat x_\A = c \cdot
    x_\A$. We have to prove that $c \in \A$: assume the converse. Then
    $c \in \I$ or $c \in \C_\infty$. We know by 
    \autoref{nsa_pg:lim_entry} that all entries are limited and at least one is
    appreciable. Since all entries are limited we know that $c$ can't be
    unlimited, otherwise the product of a limited entry with $c$ would be
    unlimited. But $c$ cannot be infinitesimal either, since the product of
    limited entries with an infinitesimal is infinitesimal by 
    \autoref{nsa_basics:arith} which is a contradiction to appreciability of at
    least one component.
\end{proof}
\begin{lemma}[Magnitude and Appreciability]
        \label{nsa_pg:mag_appr}
        Let $x \in \CsP^d$ and let $\lambda \in \Cs \setminus \{0 \}$. If all components of ${\lambda}^{-1}x$ are limited and at least one component is appreciable, then $\lambda \in \magni{(\norm x)}$. 
\end{lemma}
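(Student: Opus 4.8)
The plan is to recognize this statement as the converse of \autoref{nsa_pg:lim_entry}, and to prove it by collapsing the componentwise condition on $\lambda^{-1}x$ into a single scalar condition on its norm. The hypothesis says precisely that $\lambda^{-1}x$ is an \emph{appreciable representative} in the sense of \autoref{nsa_pg:representative}, so by \autoref{nsa_pg:rep_norms} its Euclidean norm is appreciable, i.e. $\norm{\lambda^{-1}x} \in \mathbb{A}$. This is the one place where the full force of the earlier norm characterization is used, and it immediately replaces the vector-level assumption by a statement about a single hypercomplex scalar.

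The next step is to exploit the absolute homogeneity of the Euclidean norm, which transfers to the hypercomplex setting by universal transfer from the standard identity on $\C^d$: $\norm{\lambda^{-1}x} = |\lambda|^{-1}\norm{x} = \norm{x}/|\lambda|$. Combining this with the previous observation gives $\norm{x}/|\lambda| \in \mathbb{A}$. I would then set $A := \norm{x}/\lambda$ and note that its modulus is exactly $|A| = \norm{x}/|\lambda|$, which we have just seen is appreciable; since membership in $\mathbb{A}_\C$ depends only on the modulus, it follows that $A \in \mathbb{A}$. From the relation $\norm{x} = A \cdot \lambda$ with $A$ appreciable, the definition of magnitude (\autoref{nsa_basics:def_magni}) yields $\lambda \in \magni(\norm{x})$ directly. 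Alternatively, \autoref{nsa_pg:apr_magni} delivers the same conclusion, since $\sh(\norm{x}/\lambda) \in \C \setminus \{0\}$.

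I expect no serious obstacle here: the argument is essentially a one-line norm computation once the hypothesis is rephrased via \autoref{nsa_pg:rep_norms}. The only point that needs genuine care is the distinction between the real quantity $\norm{x}/|\lambda|$ that controls appreciability and the complex factor $A = \norm{x}/\lambda$ appearing in the definition of $\magni$; one must observe that appreciability of a hypercomplex number is governed solely by its modulus, so nothing is lost when moving between the two. A secondary, routine check is that absolute homogeneity of the Euclidean norm really does hold over $\Cs$, which again follows by universal transfer.
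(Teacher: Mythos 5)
Your proof is correct, and it reaches the conclusion by a genuinely different route than the paper. The paper compares $\lambda^{-1}x$ with the reference representative $x/\norm{x}$: it writes $\lambda^{-1}x = c\cdot x/\norm{x}$ for some $c\in\Cs\Smz$ and argues, in the style of \autoref{nsa_pg:diff_repr}, that $c$ must be appreciable because an infinitesimal or unlimited $c$ would destroy the ``all entries limited, at least one appreciable'' property; then $\lambda/\norm{x} = 1/c \in \A$ gives the claim. You instead collapse the componentwise hypothesis into the single scalar statement $\norm{\lambda^{-1}x}\in\A$ via \autoref{nsa_pg:rep_norms}, use absolute homogeneity (valid over $\Cs$ by transfer) to get $\norm{x}/\abs{\lambda}\in\A$, and conclude that $A := \norm{x}/\lambda\in\A$ with $\norm{x}=A\cdot\lambda$, which is literally the definition of $\lambda\in\magni(\norm{x})$. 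Note that your $A$ is exactly the paper's $c$; the two proofs differ only in how they certify its appreciability. Your version is the more economical one --- it avoids introducing the auxiliary representative and the contradiction argument --- at the cost of leaning on \autoref{nsa_pg:rep_norms}, whose proof the paper dismisses as obvious (it is: limited entries give a limited sum of squares, and one appreciable entry bounds the norm below by an appreciable number). Your explicit separation of the real quantity $\norm{x}/\abs{\lambda}$, which governs appreciability, from the complex factor $\norm{x}/\lambda$ appearing in the definition of $\magni$ is exactly the right point of care and is not spelled out in the paper.
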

\begin{proof}
    We know that all entries of $x_\A:=\frac{x}{\norm{x}}$ are limited and at least one component is appreciable. Furthermore we know there is a $c \in \Cs \Smz$ such that:
        \[
            \lambda^{-1} x  = c \cdot \frac{x}{\norm{x}}        
    \]
and additionally $c \in \A$, since otherwise $\lambda^{-1} x$ would have an appreciable component. It follows that
\begin{align*}
    \lambda &= \frac 1 c \norm{x} \\ \Leftrightarrow \frac {\lambda}{ \norm{x}} &= \frac 1 c.
\end{align*}
Since  $\frac 1 c \in \A$ this means that $\lambda \in \magni(\norm{x})$, which is the claim.
\end{proof}
\begin{remark}
    This argument can be easily generalized to matrix norms which will be useful later on when we discuss conic sections and projective transformations (see \autoref{nsa_pg:appr_matrix}).
\end{remark}
\begin{definition}[Projective Shadow, \cite{nsa-dyn-geo}]
    \label{nsa_pg:psh}
    The \textit{projective shadow} of $[x] = [(x_1,\ldots,x_d)^T] \in \CsP^d$ is defined by
    \[
        \psh ([x]) := [\sh (x_{\mathbb A})]    \]
        where $x_\A$ is an appreciable representative of $x$.
\end{definition}
\begin{remark}
    Writing out the definition above:
    \[
\psh([x]) =  \left[ \sh \left( \frac{1}{\lambda} \begin{pmatrix}
           x_1 \\ \vdots \\ x_d 
   \end{pmatrix} \right) \right]= \left[ \begin{pmatrix}
      \sh \frac{x_1}{\lambda} \\ \vdots  \\
  \sh\frac{x_d}{\lambda}  \end{pmatrix} \right]
  \]
    with $\lambda \in \magni (\norm{x})$ and $\sh$ the standard shadow of $\Cs$.
\end{remark}
\begin{remark}
    A slightly less general definition was already given by A.\ Leitner for the real non-standard projective space \cite{leitner2015limits}.
\end{remark}
\begin{remark}
    One might be tempted to use \autoref{nsa_basics:sh_prop}, especially something like: $\sh\left(\frac{x_i}{\lambda}\right) = \frac{\sh(x_i)}{\sh(\lambda)}$ for the projective shadow. This it is wrong in a general setting! First of all, \autoref{nsa_basics:sh_prop} is only valid for limited numbers and especially the statement of fractions only holds true, if the shadow of $\lambda$ is not zero, \ie $\lambda$ is not infinitesimal.
\end{remark}
\begin{theorem}[$\CP^d$ Isomorphism]
    The space $\CsP^d/\mathbb I$ is isomorphic to $\CP^d$ with the
    correspondence map $\psh$.
\end{theorem}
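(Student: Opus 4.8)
The plan is to realize $\psh$ as a well-defined surjection $\CsP^d \to \CP^d$ whose fibers are exactly the $\I$-equivalence classes, so that it descends to a bijection on the quotient, and then to argue that this bijection preserves the projective (incidence) structure. First I would make the quotient precise: I read $\CsP^d/\I$ as the set of equivalence classes of $\CsP^d$ under the relation $[x] \approx [y]$ that holds precisely when $[x]$ and $[y]$ admit appreciable representatives $x_\A, y_\A$ with $x_\A \simeq y_\A$ componentwise (that is, $x_\A - y_\A$ is an infinitesimal vector). This is the projective analogue of the halo relation, and the notation deliberately mirrors the field-level quotient $\mathbb{L}/\I$ of \autoref{nsa_basics:iso}.

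The first technical point is well-definedness of $\psh$. Every $[x] \in \CsP^d$ has an appreciable representative by \autoref{nsa_pg:lim_entry}, and by \autoref{nsa_pg:diff_repr} any two such representatives differ by a factor $c \in \A$. Since appreciability of $c$ forces $\sh(c) \neq 0$, applying $\sh$ componentwise via \autoref{nsa_basics:sh_prop} yields the same point of $\CP^d$; moreover $\sh(x_\A)$ is nonzero because at least one entry of $x_\A$ is appreciable, so the image genuinely lands in $\CP^d$. Surjectivity is then immediate: for any $[z] \in \CP^d$ the standard vector $z$ is itself an appreciable representative of its own image in $\CsP^d$, and $\sh$ fixes standard numbers, so $\psh([z]) = [z]$.

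The heart of the argument is matching the fibers of $\psh$ with the relation $\approx$. If $\psh([x]) = \psh([y])$, then $[\sh(x_\A)] = [\sh(y_\A)]$, so there is a nonzero $\mu \in \C$ with $\sh(x_\A) = \mu\,\sh(y_\A) = \sh(\mu y_\A)$; since $\mu y_\A$ is again appreciable, $x_\A - \mu y_\A$ has vanishing shadow and is therefore infinitesimal, giving $[x] \approx [y]$. The converse runs the same identity backwards. Hence $\psh$ factors through $\CsP^d/\I$ as a bijection onto $\CP^d$. To upgrade this bijection to an isomorphism of projective spaces, I would finally verify that $\psh$ carries incidence to incidence: computing a collinearity determinant on appreciable representatives and invoking the shadow-compatible arithmetic of \autoref{nsa_basics:sh_prop}, a vanishing (respectively non-vanishing) standard determinant is the shadow of an infinitesimal (respectively appreciable) non-standard determinant, so collinearity is preserved in both directions.

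The main obstacle I anticipate is precisely this last step, together with the bookkeeping hidden in the definition of $\CsP^d/\I$: the projective operations realized by cross products are \emph{not} representative-independent over $\Cs$, as the paper stresses, so one must consistently normalize to appreciable representatives before taking shadows. Once that normalization is fixed, the shadow properties make the incidence computation routine, and the delicate part reduces to checking that no degeneracy is introduced — that appreciable inputs produce an appreciable (not merely limited) determinant exactly when the shadow points are in general position.
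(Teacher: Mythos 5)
Your proposal is correct, but it takes a genuinely more explicit route than the paper. The paper's proof is a two-sentence reduction: it observes via \autoref{nsa_pg:lim_entry} that every entry of an appreciable representative $x_\A$ is limited, applies the field-level isomorphism $\Lim/\I \cong \C$ of \autoref{nsa_basics:iso} componentwise, and uses the existence of an appreciable entry to conclude that $\sh(x_\A)$ is a nonzero vector and hence a legitimate element of $\CP^d$. That is essentially only the check that $\psh$ lands where it should; the paper never makes the quotient $\CsP^d/\I$ precise, nor does it verify independence of the choice of appreciable representative, surjectivity, or that the fibers of $\psh$ are exactly the $\I$-classes. You supply all of these: well-definedness via \autoref{nsa_pg:diff_repr} (the scaling factor $c\in\A$ has nonzero shadow), surjectivity from the fact that standard vectors are their own appreciable representatives, and the fiber computation $\sh(x_\A)=\mu\,\sh(y_\A)\Rightarrow x_\A-\mu y_\A\in\I^{\,d+1}$, which is exactly the paper's later notion of almost equivalence. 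What the paper's approach buys is brevity by leaning on the componentwise ring isomorphism; what yours buys is an actual proof of bijectivity on the quotient, which is what ``isomorphic'' requires. Your final paragraph on transporting incidence is only a sketch, but the paper does not attempt that step at all (it is effectively deferred to \autoref{nsa_pg:scalar_sh} and the almost-collinearity lemmas), so no gap relative to the paper's own standard of proof.
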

\begin{proof}
    Let $x \in \CsP^d$. We know by \autoref{nsa_pg:lim_entry} that all entries of $x_\A$ are
    limited. Therefore we already know that for every component we can apply
    \autoref{nsa_basics:iso} and use that $\mathbb L / \mathbb I$ is
    isomorphic to $\C$. Furthermore we know that at least one component is
    appreciable and not
    infinitesimal again by \autoref{nsa_pg:lim_entry}. This ensures that at
    least one component not in the halo of 0. Therefore we know the shadow of an appreciable representative $x_\A$ is element of $\CP^d$.
\end{proof}
Incidences play a very important role in projective geometry. For points and lines these can  
conveniently be determined by the standard scalar product in $\CP^2$. We will define a
generalization of the scalar product here in order to embrace infinitesimal and unlimited
representatives.
\begin{definition}[$\CsP^2$ Appreciable Scalar Product]
    \label{nsa_pg:scal_prod}
    We define the \textit{appreciable scalar product} $\langle \cdot, \cdot \rangle_*$ for objects $x,y \in \CsP^2$ as
    \[
    \langle x, y \rangle_*:= \langle x_\A, y_\A\rangle,
    \]
    with arbitrary appreciable representatives $x_\A, y_\A$, where $\langle \cdot, \cdot \rangle$ denotes the standard complex scalar product.
\end{definition}
\begin{remark}
       For the case of $\RsP^2$ one would of course use the real scalar product. 
\end{remark}
    The value of the scalar product is not independent of the
    representatives, nevertheless if the value of the ``membership'' of the scalar
    product is infinitesimal is well defined \ie $\langle x,y \rangle_* \in
    \I$ is independent of the choice of an appreciable representative. This is
    our next task:

\begin{theorem}[Well-defined $\I$ Relation]
    \label{nsa_pg:i_well_def}
    For $x,y \in \CsP^2$ the relation \[\langle x, y \rangle_* \in \I\] is well defined.
\end{theorem}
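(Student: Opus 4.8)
The plan is to show that the truth value of the statement $\langle x, y \rangle_* \in \I$ is unaffected by the choice of appreciable representatives, since the value $\langle x_\A, y_\A \rangle$ itself is not invariant. The essential tool is \autoref{nsa_pg:diff_repr}: any two appreciable representatives of the same projective point differ only by an appreciable scalar. So if $x_\A, \hat x_\A$ are appreciable representatives of $x$ and $y_\A, \hat y_\A$ are appreciable representatives of $y$, then there are $c, c' \in \A$ with $\hat x_\A = c \cdot x_\A$ and $\hat y_\A = c' \cdot y_\A$. The whole argument then reduces to comparing $\langle x_\A, y_\A \rangle$ with $\langle \hat x_\A, \hat y_\A \rangle$ and checking that they are simultaneously infinitesimal.

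First I would pull the two scalars out of the (sesqui)linear form. By the relevant (anti)linearity of the standard complex scalar product, $\langle \hat x_\A, \hat y_\A \rangle = \mu \cdot \langle x_\A, y_\A \rangle$, where $\mu$ is a product of $c$ and $c'$, each possibly conjugated depending on the convention (\eg $\mu = \conj{c}\,c'$ or $\mu = c\,\conj{c'}$). In every case $\mu \in \A$: appreciability depends only on the absolute value, so $\conj{c} \in \A$ and $\conj{c'} \in \A$, and the product of two appreciable numbers is appreciable by \autoref{nsa_basics:arith}. Thus the two scalar-product values differ exactly by multiplication with a fixed appreciable factor $\mu$.

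It then remains to observe that multiplication by an appreciable number preserves membership in $\I$ in both directions. If $\langle x_\A, y_\A \rangle \in \I$, then $\langle \hat x_\A, \hat y_\A \rangle = \mu \cdot \langle x_\A, y_\A \rangle \in \I$, because the product of an appreciable number and an infinitesimal is infinitesimal (\autoref{nsa_basics:arith}). Conversely, since $\mu^{-1} \in \A$ as the reciprocal of an appreciable number is appreciable, the same argument applied to $\langle x_\A, y_\A \rangle = \mu^{-1}\cdot\langle \hat x_\A, \hat y_\A \rangle$ gives the reverse implication. Hence $\langle x_\A, y_\A \rangle \in \I \Leftrightarrow \langle \hat x_\A, \hat y_\A \rangle \in \I$, and the relation is well defined.

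I do not expect any serious obstacle here; the proof is essentially an application of \autoref{nsa_pg:diff_repr} together with the closure properties of $\A$ and $\I$ under the operations in \autoref{nsa_basics:arith}. The only point requiring a little care is the conjugation convention in the complex scalar product, but this is harmless: conjugation preserves the absolute value and hence leaves the class $\A$ invariant, so the extracted factor $\mu$ is appreciable regardless of which sesquilinear convention is adopted.
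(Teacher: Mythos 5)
Your proposal is correct and follows essentially the same route as the paper: invoke \autoref{nsa_pg:diff_repr} to relate two pairs of appreciable representatives by appreciable scalars, pull them out of the sesquilinear form as a single appreciable factor, and use the closure properties in \autoref{nsa_basics:arith} to see that multiplication by that factor preserves membership in $\I$ in both directions. If anything, you are slightly more careful than the paper, which only states the forward computation and asserts the equivalence (and contains a typo, writing $\in \A$ where $\in \I$ is meant); your explicit use of $\mu^{-1} \in \A$ for the converse direction is the right way to close that gap.
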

\begin{proof}
Let $x_\A, \hat x_\A$ be limited representatives of $x$ and $y_\A, \hat y_\A$ be limited representatives of $y$. By \autoref{nsa_pg:diff_repr} we can choose $\iota, \kappa \in \A$  such that $x = \iota \cdot \hat x$, $y = \kappa \cdot \hat y$.
Then it holds true:
\[
    \langle x, y \rangle_* = \langle x_\A, y_\A \rangle = \langle \iota \cdot \hat x_\A, \kappa \cdot \hat y_\A \rangle = \underbrace{\bar \iota \cdot \kappa}_{=:c} \; \langle  \hat x_\A, \hat y_\A \rangle 
\]
Since $\kappa, \iota \in \A$ it holds true that also $c \in \A$. And so 
\[
     \langle x_\A, y_\A \rangle  \in \I \Leftrightarrow \langle  \hat x_\A, \hat y_\A \rangle \in \A,
\]
which is the claim.
\end{proof}
\begin{theorem}[Appreciable Scalar Product and Shadows]
    \label{nsa_pg:scalar_sh}
For $x,y \in \CsP^2$ it holds true
\[
    \sh\left(\langle x, y \rangle_*\right) = 0 \Leftrightarrow \langle \psh(x),
    \psh(y) \rangle = 0 
\]
\end{theorem}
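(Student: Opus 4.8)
The plan is to establish the stronger pointwise identity $\sh(\langle x, y\rangle_*) = \langle \sh(x_\A), \sh(y_\A)\rangle$ for one fixed pair of appreciable representatives, and then simply read off the claimed biconditional for vanishing. The elegance of this route is that it handles both directions of ``$\Leftrightarrow$'' simultaneously, so no separate case analysis is needed.

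First I would fix appreciable representatives $x_\A$ of $x$ and $y_\A$ of $y$. By \autoref{nsa_pg:lim_entry} all their entries are limited and at least one entry of each is appreciable. Consequently, with the convention $\langle u, v\rangle = \sum_i \overline{u_i}\,v_i$ read off from the proof of \autoref{nsa_pg:i_well_def}, each summand $\overline{(x_\A)_i}\,(y_\A)_i$ is a product of limited numbers, hence limited by \autoref{nsa_basics:arith}; therefore the full sum $\langle x_\A, y_\A\rangle$ is limited and its shadow is defined.

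The computational core is to push the shadow through the scalar product. Using additivity and multiplicativity of the shadow from \autoref{nsa_basics:sh_prop}, together with the conjugation rule $\sh(\conj{z}) = \conj{\sh(z)}$ from \autoref{nsa_basics:conj_shadow}, I would compute $\sh(\langle x_\A, y_\A\rangle) = \sum_i \overline{\sh((x_\A)_i)}\,\sh((y_\A)_i) = \langle \sh(x_\A), \sh(y_\A)\rangle$. Here $\sh(x_\A)$ and $\sh(y_\A)$ are exactly the representatives of $\psh(x)$ and $\psh(y)$ fixed by \autoref{nsa_pg:psh}, and they are genuine nonzero representatives precisely because the appreciable entry of each has nonzero shadow.

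Finally I would conclude by invoking representative-independence on both sides. On the left, $\sh(\langle x,y\rangle_*) = 0$ is equivalent to $\langle x, y\rangle_* \in \I$, which is well defined by \autoref{nsa_pg:i_well_def}; on the right, vanishing of the standard scalar product of two representatives is the incidence relation in $\CP^2$ and hence also independent of the chosen representatives. Chaining these through the identity above yields $\sh(\langle x,y\rangle_*) = 0 \Leftrightarrow \langle \psh(x), \psh(y)\rangle = 0$. I expect no genuine obstacle here; the only point requiring care is the bookkeeping around well-definedness, since the numerical value $\langle x, y\rangle_*$ does depend on the appreciable representatives chosen, whereas both sides of the biconditional do not, so fixing one convenient choice is legitimate.
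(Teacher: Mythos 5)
Your proposal is correct and takes essentially the same route as the paper's proof: the core of both is pushing the shadow through the sum of products via \autoref{nsa_basics:sh_prop} and \autoref{nsa_basics:conj_shadow} to get $\sh(\langle x_\A, y_\A\rangle) = \langle \sh(x_\A), \sh(y_\A)\rangle$, with $\sh(x_\A)$ recognized as a valid nonzero representative of $\psh(x)$. The only cosmetic difference is that the paper tracks the change of representatives explicitly through appreciable factors $\iota,\kappa$ from \autoref{nsa_pg:diff_repr}, whereas you delegate that bookkeeping to the already-established well-definedness of both sides; either is fine.
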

\begin{proof}
    Let $x_\A, \hat x_\A$ be appreciable representatives of $x$ and $y_\A, \hat y_\A$ be appreciable representatives of $y$. Then by \autoref{nsa_pg:diff_repr} we know there are $\iota, \kappa \in \A$ such that $x_\A = \iota \cdot \hat x_\A$ and $y_\A = \kappa \cdot \hat y_\A$. Then we find
    \begin{align*}
    \langle \psh( x), \psh( y) \rangle &= \langle \sh(\hat x_\A), \sh(\hat y_\A) \rangle = \overline{\sh(x_\A)}^T \cdot \sh(y_\A) \\
                                       &= \sum_{i=1}^3 \sh(\overline{x_{\A_i}}) \cdot \sh(y_{\A_i}) = \sum_{i=1}^3 \sh\left(\overline{x_{\A_i}} \cdot y_{\A_i}\right) \\
                                       &= \sh \left( \sum_{i=1}^3 \overline{x_{\A_i}} \cdot y_{\A_i}\right) = \sh(\langle x_\A, y_\A \rangle ) \\
                                       &=  \underbrace{\sh(\bar \iota \cdot \kappa)}_{:=c} \; \sh(\langle \hat x_\A, \hat y_\A \rangle ) = c \; \langle \hat x_\A, \hat y_\A \rangle = c \; \langle x,y \rangle_*.
    \end{align*}
    Since $c \in \A $ it is easy to see that
\[
    \sh\left(\langle x, y \rangle_*\right) = 0 \Leftrightarrow \langle \psh(x)
    \psh(y) \rangle = 0.
\]
\end{proof}

\begin{lemma}
    Let $x,y \in \CsP^2$, if we choose the same appreciable representative in
    $\sh\left(\langle x, y \rangle_*\right)$ and $\langle \psh(x), \psh(y)
    \rangle$ respectively it even holds true:
\[
    \sh\left(\langle x, y \rangle_*\right)  = \langle \psh(x),
    \psh(y) \rangle  
\]
\end{lemma}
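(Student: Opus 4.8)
The plan is to revisit the central computation inside the proof of \autoref{nsa_pg:scalar_sh} and to observe that committing to a \emph{single} appreciable representative on each side removes the proportionality factor $c$ that appeared there. Concretely, I would fix one appreciable representative $x_\A$ of $x$ and one appreciable representative $y_\A$ of $y$, and insist that these very representatives are the ones used both to evaluate $\langle x, y \rangle_*$ (via \autoref{nsa_pg:scal_prod}) and to form $\psh(x) = [\sh(x_\A)]$ and $\psh(y) = [\sh(y_\A)]$ (via \autoref{nsa_pg:psh}). This synchronization is the entire content of the hypothesis ``same appreciable representative''.

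Next I would carry out the chain of shadow manipulations. By \autoref{nsa_pg:lim_entry} all entries $x_{\A_i}, y_{\A_i}$ are limited, hence so are the products $\conj{x_{\A_i}} \cdot y_{\A_i}$, so that \autoref{nsa_basics:sh_prop} applies and the shadow commutes with the finite sum and with each product:
\[
    \sh\left(\langle x, y \rangle_*\right) = \sh\left(\sum_{i=1}^3 \conj{x_{\A_i}} y_{\A_i}\right) = \sum_{i=1}^3 \sh(\conj{x_{\A_i}}) \, \sh(y_{\A_i}).
\]
Applying \autoref{nsa_basics:conj_shadow} to each term to pull the conjugation outside the shadow, the right-hand side becomes $\sum_{i=1}^3 \conj{\sh(x_{\A_i})} \, \sh(y_{\A_i}) = \langle \sh(x_\A), \sh(y_\A) \rangle = \langle \psh(x), \psh(y) \rangle$, which is exactly the claim.

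I do not expect a genuine obstacle here, since the statement is just a sharpening of \autoref{nsa_pg:scalar_sh}: it is precisely the special case $\hat x_\A = x_\A$ and $\hat y_\A = y_\A$ of that theorem, for which the relating scalars satisfy $\iota = \kappa = 1$ and hence the factor $c = \sh(\conj{\iota}\kappa) = 1$ disappears. The only point that requires care is making sure each shadow-distributivity step is licensed, i.e. that every number to which \autoref{nsa_basics:sh_prop} is applied is limited; this is guaranteed by the appreciability of the chosen representatives via \autoref{nsa_pg:lim_entry}.
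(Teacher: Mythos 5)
Your proposal is correct and matches the paper's proof, which likewise just observes that choosing identical representatives forces the factor $c$ in the proof of \autoref{nsa_pg:scalar_sh} to equal $1$. Your explicit verification that the shadow distributes over the sum, the products, and the conjugation (justified by \autoref{nsa_pg:lim_entry}, \autoref{nsa_basics:sh_prop}, and \autoref{nsa_basics:conj_shadow}) is exactly the computation already carried out inside that theorem's proof, so nothing new is needed.
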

\begin{proof}
    We only have to realize that in the proof of \autoref{nsa_pg:scalar_sh}
    $c = 1$ if we choose the same representatives. Then the claim
    immediately follows.
\end{proof}
\begin{remark}
    Some of the following ``almost relations'' in the following sections were already investigated by J.~Fleuriot in an Euclidean setting (see for example \cite{fleuriot2012, fleuriot2000nonstandard}). We have to do a little more work in projective space than in the Euclidean setting since the representation of a vector plays an important role.
\end{remark}
\begin{definition}[Almost Orthogonal, \cite{nsa-dyn-geo}]
    We two vectors $l,p \in \Cs^3$ \textit{almost orthogonal} and write $[p] \perp_\I [l] $  if and only if
    \[
 \langle p, l \rangle_*       = \epsilon \in
        \mathbb{I}.
    \]
\end{definition}
\begin{remark}
    Note that if two vectors in $\C^3$ are orthogonal, they are also almost
    orthogonal since $0$ is the only infinitesimal in $\C$.
\end{remark}
\section{Incidence of Points and Lines}
Now we will start out with basic projective geometry: we define incidence relations of points and lines and analyze well known properties from the standard world.
\begin{definition}[Almost Incident]
    Define relation $\mathcal{I}_\Cs \subset \mathcal{P}_\Cs \times \mathcal{L}_\Cs$ for a point $p \in  \mathcal{P}_\Cs$ and a line $l \in  \mathcal{L}_\Cs$ by
    \[
        [p] \mathcal{I}_\Cs [l] \Leftrightarrow  [p] \perp_\I [l].
    \]
    Then we call $p$ \textit{almost incident} to $l$.
\end{definition}
\begin{remark}
    The incidence relation of $\CP^2$ is a special case of the almost incidence
    relation above \ie if $p$ and $l$ are incident in $\CP^2$ then they are
almost incident in $\CsP^2$.
\end{remark}
\begin{lemma}[Well-defined Incidence]
    \label{nsa_pg:well_def}
    The incidence relation is well defined.
\end{lemma}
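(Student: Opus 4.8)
The plan is to reduce the claim to the well-definedness of the $\I$-relation that was already established. First I would unfold the definitions: by construction, $[p] \mathcal{I}_\Cs [l]$ holds if and only if $[p] \perp_\I [l]$, which in turn is defined to mean $\langle p, l \rangle_* \in \I$. Thus the entire content of the lemma is the assertion that the truth value of the statement $\langle p, l \rangle_* \in \I$ depends only on the equivalence classes $[p] \in \mathcal{P}_\Cs$ and $[l] \in \mathcal{L}_\Cs$, and not on any particular choice of representatives used to evaluate the appreciable scalar product.

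The key observation is that this is exactly the statement of \autoref{nsa_pg:i_well_def}. The appreciable scalar product $\langle \cdot, \cdot \rangle_*$ is defined via arbitrary appreciable representatives $x_\A, y_\A$, and \autoref{nsa_pg:i_well_def} shows that $\langle x, y \rangle_* \in \I$ is invariant under replacing one appreciable representative by another: passing from $(x_\A, y_\A)$ to $(\hat x_\A, \hat y_\A)$ only multiplies the scalar product by an appreciable factor $c = \bar\iota \cdot \kappa \in \A$ (using \autoref{nsa_pg:diff_repr}), and multiplication by an appreciable number preserves membership in $\I$ by \autoref{nsa_basics:arith}. Hence the almost incidence relation inherits its well-definedness directly.

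The only point requiring care --- and the nearest thing to an obstacle --- is the bookkeeping about what ``representative'' means at each level. There are two nested choices: the choice of a representative vector for each projective class, and, within that, the choice of an appreciable rescaling. Since the appreciable scalar product is defined so as to only ever compare appreciable representatives, and since all appreciable representatives of a fixed class differ by an appreciable factor (\autoref{nsa_pg:diff_repr}), both layers collapse into the single invariance statement already proven. I would therefore conclude the proof simply by invoking \autoref{nsa_pg:i_well_def}, with the remark above indicating why no further computation is needed.
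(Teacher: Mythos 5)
Your proof is correct and matches the paper's approach exactly: the paper also disposes of this lemma by observing it is a special case of \autoref{nsa_pg:i_well_def}. Your additional unpacking of the two layers of representative choice is accurate but not needed beyond the citation.
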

\begin{proof}
Special case of \autoref{nsa_pg:i_well_def}.
\end{proof}
\begin{lemma}[Incident and Almost Incident]
    \label{nsa_pg:inci_and_almost}
    A point $p$ and a line $l$ in $\CsP^2$ are almost incident if and only of
    their projective shadow $\psh(p)$ and $\psh(l)$ are incident in $\CP^2$.
\end{lemma}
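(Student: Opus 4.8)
The plan is to reduce the statement directly to \autoref{nsa_pg:scalar_sh}, which already relates the shadow of the appreciable scalar product to the standard scalar product of the projective shadows. First I would unfold both sides of the claimed equivalence. By the definitions of \emph{almost incident} and \emph{almost orthogonal}, the point $p$ and the line $l$ are almost incident precisely when $\langle p, l \rangle_* \in \I$. On the other side, $\psh(p)$ and $\psh(l)$ are incident in $\CP^2$ exactly when the standard scalar product $\langle \psh(p), \psh(l) \rangle$ vanishes.

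The bridge between these two conditions is the observation that $\langle p, l \rangle_* \in \I$ if and only if $\sh(\langle p, l \rangle_*) = 0$. This is the one point deserving a sentence of care: the appreciable scalar product is evaluated on appreciable representatives, whose entries are limited, so $\langle p, l \rangle_*$ is a finite sum of products of limited numbers and is therefore itself limited by \autoref{nsa_basics:arith}. Hence its shadow is defined, and for a limited number $z$ one has the elementary equivalence $z \in \I \Leftrightarrow z \simeq 0 \Leftrightarrow \sh(z) = 0$. With this in place the argument is just the chain
\[
    [p] \mathcal{I}_\Cs [l] \;\Leftrightarrow\; \langle p, l \rangle_* \in \I \;\Leftrightarrow\; \sh(\langle p, l \rangle_*) = 0 \;\Leftrightarrow\; \langle \psh(p), \psh(l) \rangle = 0,
\]
where the last equivalence is precisely \autoref{nsa_pg:scalar_sh} and the final expression is the incidence of $\psh(p)$ and $\psh(l)$ in $\CP^2$.

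I do not expect a genuine obstacle, since the substantive work — transporting the scalar product through the shadow and matching it with the projective shadows — was already carried out in \autoref{nsa_pg:scalar_sh}. The only thing to keep honest is the equivalence $z \in \I \Leftrightarrow \sh(z) = 0$, which requires the scalar product to be limited so that the shadow exists; this is guaranteed by working with appreciable representatives. Well-definedness of the relation is likewise not a concern here, having been settled in \autoref{nsa_pg:well_def}.
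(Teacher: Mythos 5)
Your proof is correct and follows essentially the same route as the paper's, which also observes that the shadow of an infinitesimal is $0$ and then invokes \autoref{nsa_pg:scalar_sh}; you merely spell out the intermediate equivalence $\langle p,l\rangle_* \in \I \Leftrightarrow \sh(\langle p,l\rangle_*)=0$ and the limitedness needed for it, which the paper leaves implicit.
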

\begin{proof}
    Note that the shadow of an infinitesimal number is $0$. Then the claim it a direct application of \autoref{nsa_pg:scalar_sh}. 
\end{proof}
\begin{example}[Geometriekalküle]
    \label{expl:geokalkuele}
    In ``Geometriekalküle'' by J. Richter-Gebert and T. Orendt
    \cite{richter2009geometriekalkule} there is a nice example on the
    introduction of farpoints (see \page 4). Let $x,y \in \C$, and define a point $P(t)$ by: 
   \[
   [P(t)]= \left[\begin{pmatrix} x \cdot t \\ y \cdot t \\ 1\end{pmatrix}
   \right] = \left[\begin{pmatrix} x \\ y \\ 1/t\end{pmatrix} \right].
   \]
   If one takes the limit of $t$ to infinity, one finds:
   \[
   \lim_{t \rightarrow \infty} [P(t)]= \lim_{t \rightarrow \infty}
   \left[\begin{pmatrix} x \\ y \\ 1/t\end{pmatrix} \right] =
   \left[\begin{pmatrix} x \\ y \\ 0\end{pmatrix} \right].
   \]
   One can easily check that this point is incident with the line at infinity $\linf$ of the standard embedding. 

   We can give an analogous example using hyperreal numbers. Let H be an arbitrary real unlimited number and define $P'$:
   \[
   [P']:= \left[\begin{pmatrix} x \cdot H \\ y \cdot H \\ 1\end{pmatrix} \right].
   \]
   $P'$ is a point which should be incident to $l_\infty$. However, then the usual check of incidence, the standard scalar product, fails: 
   \[ 
       \langle  \begin{pmatrix} x \cdot H \\ y \cdot H \\ 1\end{pmatrix},
       \begin{pmatrix} 0 \\ 0 \\ 1\end{pmatrix}\rangle = 0 \cdot x \cdot H + 0 \cdot y \cdot H + 1 =
       1 \not \in \I.
   \]
   Now if we note that $H$ is in $\magni (\norm{P})$, then we find:
   \[ 
       \langle  \begin{pmatrix} x \cdot H \\ y \cdot H \\ 1\end{pmatrix},
       \begin{pmatrix} 0 \\ 0 \\ 1\end{pmatrix}\rangle_* = 
       \langle  \begin{pmatrix} x \\ y \\ \frac{1}{H}\end{pmatrix},
       \begin{pmatrix} 0 \\ 0 \\ 1\end{pmatrix}\rangle = \frac{1}{H}  
   \]

   Since H is unlimited, its inverse is infinitesimal and therefore the point is
   almost incident to the line at infinity. This means that the projection of
   $P'$ to $\CP^2$ is incident to $\linf$ and hence a farpoint as expected.
\end{example}
\begin{remark}
As for the appreciable scalar product in \autoref{nsa_pg:scal_prod}, we also have to say how we find the connecting line of two points or the intersection of two lines, respectively. The standard cross-product is fine, if we want to work in $\CsP^2$, but if we want to draw conclusions about $\CP^2$ (\ie taking the show) we have to redefine the operator and normalize beforehand.
\end{remark}
\begin{definition}[Appreciable Cross-Product, \cite{nsa-dyn-geo}]
    \label{nsa_pg:cross-prod}
    Let $x,y$ be in two vectors in $\CsP^2$. We define the \textit{appreciable cross-product} $\mathit{\times_*}:$ by:
   \[
       x \times_* y :=  x_\A \times y_\A 
   \]
   For appreciable representatives $x_\A, y_\A$ of $x$ and $y$.
\end{definition}
\begin{lemma}[Well-defined Cross-Product]
    \label{nsa_pg:cross_well_defined}
    The appreciable cross-product defined in \autoref{nsa_pg:cross-prod} is well defined. 
\end{lemma}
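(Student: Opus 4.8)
The plan is to show that the equivalence class $[x_\A \times y_\A] \in \CsP^2$ does not depend on the choice of the appreciable representatives $x_\A$ and $y_\A$. Concretely, I would fix two pairs of appreciable representatives, $(x_\A, y_\A)$ and $(\hat x_\A, \hat y_\A)$, of the same projective objects $x$ and $y$, and prove that $x_\A \times y_\A$ and $\hat x_\A \times \hat y_\A$ differ only by a nonzero hypercomplex scalar factor, so that they represent the same element of $\CsP^2$.

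First I would invoke \autoref{nsa_pg:diff_repr} to obtain unique appreciable scalars $c, d \in \A$ with $\hat x_\A = c \cdot x_\A$ and $\hat y_\A = d \cdot y_\A$. Then, using the bilinearity of the standard cross-product over $\Cs$ (which carries over from the complex case by universal transfer, \autoref{nsa_basics:transfer}), I would compute
\[
\hat x_\A \times \hat y_\A = (c \cdot x_\A) \times (d \cdot y_\A) = (c \cdot d)\,(x_\A \times y_\A).
\]

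Finally, by \autoref{nsa_basics:arith} the product $c \cdot d$ of two appreciable numbers is again appreciable, hence in particular $c \cdot d \in \Cs \setminus \{0\}$. Thus the two cross-products differ only by a nonzero scalar and therefore represent the same element of $\CsP^2$, which is exactly the assertion of well-definedness. I do not expect any serious obstacle here; the proof is essentially bilinearity combined with the previous lemma. The only point requiring a word of care is the degenerate case $x_\A \times y_\A = 0$, where the representatives are linearly dependent: the scaling identity above then forces $\hat x_\A \times \hat y_\A = 0$ as well, so both choices still agree and the claim holds in that case too.
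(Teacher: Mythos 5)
Your proposal is correct and is essentially the argument the paper intends: the paper's proof simply says the claim follows from bilinearity, and the detailed computation you give (appreciable scalars $c,d$ from \autoref{nsa_pg:diff_repr}, pulled out of the cross-product to yield an appreciable factor $c\cdot d$) is exactly the one the paper writes out explicitly for the analogous shadow cross-product in \autoref{nsa_pg:sh_cross_well_defined}. Your extra remark on the degenerate case $x_\A \times y_\A = 0$ is a sensible additional observation but does not change the approach.
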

\begin{proof}
 By the bilinearity of the cross-product the claim is obvious.
\end{proof}
\begin{remark}
    We will see in \autoref{nsa_pg:appr_cross_expl} that we have to be careful with the application of shadow function if it is applied to the cross-product.
\end{remark}
\begin{definition}[Almost Far Point, \cite{nsa-dyn-geo}]
    We call $p \in \mathcal{P}_\Cs$ an \textit{almost far point}, if it holds true for an appreciable representative $p_\A$ of $p$ with 
    \[
        p_\A = \begin{pmatrix} x \\ y \\ z\end{pmatrix} \quad \text{ and } x, y, z  \in \Lim
    \]
    that $z \in \I$, \ie that the $z$-component of the appreciable representative is infinitesimal.
\end{definition}
\begin{definition}[Appreciable Join and Meet, \cite{nsa-dyn-geo}]
    Analogously to the standard definition of the join in $\CP^2$ we define the \textit{appreciable $\mathit{join}$} for two points $p,q \in \mathcal{P}_\Cs$ by
        \[
            \join_*(p,q):= p \times_* q = p_\A \times q_\A.
        \]
        And for two lines $l,m \in  \mathcal{L}_\Cs$ we define the \textit{appreciable $\mathit{meet}$} by 
        \[
            \meet_*(l,m):= l \times_* m = l_\A \times m_\A.
        \]
\end{definition}
\begin{remark}
    The result of an appreciable $\join$ or $\meet$ is not necessarily appreciable! Take for example the two points $p = (0,0,1)$ and $q = (\epsilon, 0,1)$, with $\epsilon \in \I$, \ie $\epsilon$ is infinitesimal. Then their appreciable $\join$, which is equivalent to the standard join since $p$ and $q$ are appreciable, is 
    \[
    \join_*(p, q) = \join(p,q) = \begin{pmatrix} 0 \\ \epsilon \\ 0 \end{pmatrix}
    \]
    which is not an appreciable vector!
\end{remark}
\begin{definition}[Almost Parallel]
    \label{nsa_pg:almost_par_def}
    Let $l,m \in \mathcal{L}_\Cs$ be two lines. 
    We call $l$ and $m$ \textit{almost parallel} if an appreciable representative of $\meet_*(l,m)$ is an almost far point.
\end{definition}
\begin{lemma}[Almost Parallel is well-defined]
    The definition of almost parallel in \autoref{nsa_pg:almost_par_def} is well-defined.
\end{lemma}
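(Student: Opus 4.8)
The plan is to reduce well-definedness of "almost parallel" to well-definedness properties already established for the appreciable cross-product and for the "almost far point" notion. The statement to prove is that whether two lines $l, m \in \mathcal{L}_\Cs$ are almost parallel does not depend on the choices made in the definition — namely the choice of appreciable representatives $l_\A, m_\A$ entering $\meet_*(l,m)$, and the choice of appreciable representative of the resulting meet point whose $z$-component we inspect.

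**First I would** unwind the definition into its two layers of choice. By \autoref{nsa_pg:cross_well_defined}, the appreciable cross-product $\meet_*(l,m) = l_\A \times m_\A$ is already well-defined as an element of $\CsP^2$: if $\hat l_\A = \iota \cdot l_\A$ and $\hat m_\A = \kappa \cdot m_\A$ with $\iota, \kappa \in \A$ (using \autoref{nsa_pg:diff_repr}), then by bilinearity $\hat l_\A \times \hat m_\A = (\iota \kappa)\, (l_\A \times m_\A)$, and since $\iota \kappa \in \A$ by \autoref{nsa_basics:arith}, the two meet vectors represent the same projective point and differ by an appreciable scalar. So the first layer of choice only rescales the meet by an appreciable factor.

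**The key step** is then to show that the "almost far point" property — that the $z$-component of an appreciable representative is infinitesimal — is itself invariant under the remaining freedom, which is multiplication by an appreciable scalar. Let $p := \meet_*(l,m)$ and suppose $p_\A = (x, y, z)^T$ and $\hat p_\A = (\hat x, \hat y, \hat z)^T$ are two appreciable representatives. By \autoref{nsa_pg:diff_repr} there is a unique $c \in \A$ with $\hat p_\A = c \cdot p_\A$, so in particular $\hat z = c \cdot z$. Now I invoke \autoref{nsa_basics:arith}: the product of an appreciable number $c$ with an infinitesimal is infinitesimal, and conversely, since $c^{-1} \in \A$, if $\hat z = c z$ were infinitesimal then $z = c^{-1} \hat z$ would be infinitesimal as well. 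Hence $z \in \I \Leftrightarrow \hat z \in \I$, and the "almost far point" property is independent of the appreciable representative. Combined with the previous paragraph (where the outer appreciable scalar $\iota\kappa$ is absorbed into exactly this kind of rescaling), the property is invariant under all admissible choices.

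**The main obstacle** — which is minor here — is bookkeeping the two independent sources of rescaling and confirming they compose into a single appreciable factor rather than fighting each other: the scalar $\iota\kappa$ coming from changing $l_\A, m_\A$ and the scalar $c$ coming from changing the representative of the meet both lie in $\A$, and $\A$ is closed under products by \autoref{nsa_basics:arith}, so their combined effect is still an appreciable rescaling, to which the argument of the previous paragraph applies verbatim. I would close by remarking that the limitedness of all three components of the appreciable representative (guaranteed by \autoref{nsa_pg:lim_entry}) ensures the almost far point definition applies in the first place, so no representative is excluded. This completes the verification that \autoref{nsa_pg:almost_par_def} is well-defined.
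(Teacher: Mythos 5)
Your proof is correct, but it follows a different route from the paper's. The paper disposes of this lemma in one line by declaring it a special case of \autoref{nsa_pg:i_well_def}: the intended reduction is that being an almost far point is, by \autoref{nsa_pg:almost_para_linf}, the same as satisfying $\langle p, l_\infty\rangle_* \in \I$, and the well-definedness of the relation ``appreciable scalar product lies in $\I$'' is exactly what \autoref{nsa_pg:i_well_def} establishes. You instead argue directly on components: you track the two independent sources of freedom (the representatives $l_\A, m_\A$ entering the meet, and the representative of the resulting point), show via \autoref{nsa_pg:cross_well_defined}, \autoref{nsa_pg:diff_repr} and bilinearity that their combined effect is multiplication by a single appreciable scalar, and then observe that $z \in \I \Leftrightarrow cz \in \I$ for $c \in \A$ by \autoref{nsa_basics:arith}. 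Both arguments are sound. Your version is more self-contained and makes explicit the bookkeeping that the paper leaves implicit --- in particular that the meet vector itself need not be appreciable, so the second layer of rescaling genuinely matters; the paper's version is shorter and reuses already-proven machinery, at the cost of requiring the reader to supply the bridge through \autoref{nsa_pg:almost_para_linf} themselves.
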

\begin{proof}
    Special case of \autoref{nsa_pg:i_well_def}.
\end{proof}
\begin{lemma}[Almost Far Points on $l_\infty$]
    \label{nsa_pg:almost_para_linf}
    A point $p$ is an almost far point if and only if $p$ is almost incident to $l_\infty = (0,0,1)^T$. 
\end{lemma}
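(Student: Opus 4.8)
The plan is to reduce everything to a single scalar computation, since both sides of the claimed equivalence are governed by the third component of an appreciable representative of $p$.

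First I would note that $\linf = (0,0,1)^T$ is already an appreciable representative of itself: its entries are limited and its third entry $1$ is appreciable. Hence by \autoref{nsa_pg:scal_prod} I may compute the appreciable scalar product using this representative together with any appreciable representative $p_\A = (x,y,z)^T$ of $p$, where $x,y,z \in \Lim$ by \autoref{nsa_pg:lim_entry}. Expanding the standard complex scalar product (conjugate-linear in the first slot) gives $\langle p, \linf \rangle_* = \langle p_\A, \linf \rangle = \overline{z}$, so the membership $\langle p, \linf \rangle_* \in \I$ is controlled entirely by $z$.

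Next I would observe that $\overline{z} \in \I \Leftrightarrow z \in \I$, since $\abs{\overline z} = \abs z$ and membership in $\I$ depends only on the absolute value. Putting these together yields the chain
\[
    p \text{ almost incident to } \linf \Leftrightarrow \langle p, \linf \rangle_* \in \I \Leftrightarrow \overline{z} \in \I \Leftrightarrow z \in \I,
\]
and the last condition is exactly the defining property of an almost far point, namely that the $z$-component of an appreciable representative is infinitesimal.

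The only point requiring care (and the nearest thing to an obstacle) is well-definedness: both the almost-far-point condition and the almost-incidence condition are stated in terms of \emph{an} appreciable representative, so I must ensure neither depends on the choice. For the incidence side this is precisely \autoref{nsa_pg:i_well_def}. For the almost-far-point side, two appreciable representatives differ by a factor $c \in \A$ (\autoref{nsa_pg:diff_repr}), and since $z \in \I \Leftrightarrow c z \in \I$ for appreciable $c$ by \autoref{nsa_basics:arith}, the condition $z \in \I$ is invariant. Once this is in place, the equivalence above is immediate and the proof is complete.
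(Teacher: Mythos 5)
Your proposal is correct and follows essentially the same route as the paper's own proof: expand the appreciable scalar product against the representative $(0,0,1)^T$, observe that it reduces to the third component of $p_\A$, and identify membership in $\I$ on both sides. The only differences are cosmetic refinements — you track the complex conjugation (harmless, since $\abs{\overline z} = \abs{z}$) and you make the well-definedness under change of appreciable representative explicit via \autoref{nsa_pg:i_well_def} and \autoref{nsa_pg:diff_repr}, which the paper leaves implicit.
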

\begin{proof}
    If $p$ is an almost far point then we can write an appreciable representative $p_\A$ with $p_\A = (x,y,\epsilon)$ where $x,y \in \Lim$ and $\epsilon \in \I$. Then the appreciable scalar product $\langle p_\A, l_\infty \rangle = 0 + 0 + \epsilon = \epsilon \in \I$. This means that $p$ is almost incident to $l_\infty$.

    If $p$ is almost incident to $l_\infty$ we know that for an appreciable representative $p_\A = (x,y,z)^T$ it holds true $\langle p_\A, \linf \rangle = z \in \I$. Therefore $p$ is an almost far point.
\end{proof}
\begin{lemma}[Almost Parallel Lemma]
    Two lines $l,m \in \mathcal{L}_\Cs$ are almost parallel if and only if it holds true:
    \[
        \meet_*(l,m)  \perp_\I l_\infty
    \]
    where $l_\infty = (0,0,1)^T$ denotes the line at infinity. In other words the intersection of both lines is almost incident to the line at infinity.
\end{lemma}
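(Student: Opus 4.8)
The plan is to prove this as a straightforward chain of equivalences, stringing together the definition of almost parallel with the already-established characterization of almost far points. The lemma is essentially a reformulation, so the work is bookkeeping of definitions rather than genuine calculation.

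First I would unfold \autoref{nsa_pg:almost_par_def}: by definition, the lines $l$ and $m$ are almost parallel if and only if an appreciable representative of the point $\meet_*(l,m)$ is an almost far point. This is the left-hand side of the claimed equivalence, restated in terms of the meet.

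Next I would apply \autoref{nsa_pg:almost_para_linf}, which states that a point $p$ is an almost far point if and only if $p$ is almost incident to $l_\infty = (0,0,1)^T$. Setting $p = \meet_*(l,m)$ immediately yields that $l$ and $m$ are almost parallel if and only if $\meet_*(l,m)$ is almost incident to $l_\infty$. Finally, I would unwind the definition of almost incident, namely $[p]\, \mathcal{I}_\Cs\, [l_\infty] \Leftrightarrow [p] \perp_\I [l_\infty]$, so that almost incidence of $\meet_*(l,m)$ to $l_\infty$ is exactly the relation $\meet_*(l,m) \perp_\I l_\infty$, which is the desired right-hand side.

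The only point requiring a word of care is well-definedness: both ``almost far point'' and $\perp_\I$ are phrased via appreciable representatives, so one must check the statement does not depend on the representative chosen for $\meet_*(l,m)$. I expect this to be the mildest of obstacles, as it follows directly from \autoref{nsa_pg:i_well_def} (the $\I$-relation for the appreciable scalar product is well defined) together with \autoref{nsa_pg:cross_well_defined} (the appreciable cross-product, hence the meet, is well defined up to an appreciable factor). Since an appreciable rescaling preserves both limitedness of the entries and membership of the last coordinate in $\I$, the chain of equivalences is representative-independent, and no further computation is needed.
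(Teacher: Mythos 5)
Your proposal is correct and matches the paper's proof, which simply declares the lemma a special case of \autoref{nsa_pg:almost_para_linf} applied to the point $\meet_*(l,m)$; your extra remark on representative-independence via \autoref{nsa_pg:i_well_def} and \autoref{nsa_pg:cross_well_defined} is sound and slightly more explicit than the paper's one-line argument.
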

\begin{proof}
    Special case of \autoref{nsa_pg:almost_para_linf}.
\end{proof}

\begin{definition}[Shadow Cross-Product]
    \label{nsa_pg:sh-cross-prod}
    Let $x,y$ be two objects in $\KsP^2$. We define the \textit{shadow cross-product} $\times_{\sh}$ by:
   \[
       x \times_{\sh} y :=  \sh( x \times_* y )= \sh( x_\A \times y_\A ).
   \]
\end{definition}
\begin{lemma}[Well-defined Shadow Cross-Product]
    \label{nsa_pg:sh_cross_well_defined}
    The shadow cross-product defined in \autoref{nsa_pg:sh-cross-prod} is well-defined. 
\end{lemma}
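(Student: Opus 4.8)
The plan is to show that the vector $\sh(x_\A \times y_\A)$ is, up to a nonzero complex scalar, independent of the chosen appreciable representatives, so that it determines a well-defined class in the projective shadow. First I would fix two appreciable representatives $x_\A, \hat x_\A$ of $x$ and two appreciable representatives $y_\A, \hat y_\A$ of $y$. By \autoref{nsa_pg:diff_repr} there exist unique $c, d \in \A$ with $\hat x_\A = c \cdot x_\A$ and $\hat y_\A = d \cdot y_\A$. Using the bilinearity of the cross-product (as already exploited in \autoref{nsa_pg:cross_well_defined}) I then obtain
\[
\hat x_\A \times \hat y_\A = (c\,x_\A) \times (d\,y_\A) = (cd)\,(x_\A \times y_\A).
\]

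The next step is to verify that the shadow is actually defined. Each entry of $x_\A \times y_\A$ is a difference of products of entries of $x_\A$ and $y_\A$, all of which are limited by the definition of an appreciable representative; by \autoref{nsa_basics:arith} sums and products of limited numbers remain limited, so $x_\A \times y_\A$ is a limited vector and its shadow exists componentwise by \autoref{nsa_basics:shadow}. Applying the multiplicativity of the shadow from \autoref{nsa_basics:sh_prop} gives
\[
\sh(\hat x_\A \times \hat y_\A) = \sh(cd)\,\sh(x_\A \times y_\A).
\]
Since $c, d \in \A$, their product $cd$ is appreciable by \autoref{nsa_basics:arith}, whence $\sh(cd) = \sh(c)\,\sh(d) \in \C \setminus \{0\}$. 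Therefore the two candidate outputs differ only by a nonzero complex scalar and represent the same class in $\CsP^2$, which is exactly the required well-definedness.

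The point needing the most care is the degenerate case flagged by the remark following the appreciable join and meet: the cross-product $x_\A \times y_\A$ can be infinitesimal, so that $\sh(x_\A \times y_\A)$ is the zero vector and no longer represents a valid projective element. I would resolve this by observing that multiplication by the appreciable scalar $cd$ cannot change this status, since by \autoref{nsa_basics:arith} the product of an appreciable number with an infinitesimal is infinitesimal; hence $x_\A \times y_\A$ is infinitesimal if and only if $\hat x_\A \times \hat y_\A$ is. Consequently either both representative choices yield the zero vector, or neither does and the relation $\sh(\hat x_\A \times \hat y_\A) = \sh(cd)\,\sh(x_\A \times y_\A)$ shows they determine the same nonzero projective class. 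In both situations the operation returns a consistent value, establishing that $\times_{\sh}$ is well-defined.
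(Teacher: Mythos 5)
Your proof follows essentially the same route as the paper's: decompose the two pairs of appreciable representatives via \autoref{nsa_pg:diff_repr}, pull the appreciable factor $cd$ out of the cross-product by bilinearity, and apply the shadow's multiplicativity to conclude the results differ by the nonzero scalar $\sh(cd)$. Your additional treatment of the degenerate case where $x_\A \times y_\A$ is infinitesimal (so the shadow is the zero vector and the $\sim_{\KP^2}$ relation is not literally applicable) is a point the paper's proof glosses over, and your observation that multiplication by an appreciable scalar preserves infinitesimality, making the degeneracy representative-independent, correctly closes that small gap.
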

\begin{proof}
    Let $x,y \in \KsP^2$, pick now two appreciable representatives $x_\A, \hat x_\A$ and $y_\A, \hat y_\A$. Then we know by \autoref{nsa_pg:diff_repr} that there are appreciable numbers $c,d$ such that
    \[
        x_\A = c \cdot \hat x_\A, \quad y_\A = d \cdot \hat y_\A.
    \]
    \[
        x \times_* y = x_\A \times y_\A = c \cdot \hat x_\A \times d \cdot \hat y_\A = (c \cdot d)  \cdot (\hat x_\A \times  \cdot \hat y_\A )     \]
    Where we used the bilinearity of the cross-product. Then, if we apply the shadow function and use the linearity of the shadow for limited numbers we know:
    \[
        \sh (x\times_* y) = \underbrace{\sh(c \cdot d)}_{\in \K \Smz} \cdot \sh(\hat x_\A \times   \hat y_\A)   \sim_{\KP^2} \sh(\hat x_\A \times   \hat y_\A) = \hat x \times_{\sh} \hat y
    \]
    where we used that $c \cdot d$ is appreciable.
\end{proof}
\begin{definition}[Almost Linearly Dependent]
    A set of objects in $\{v_1, \ldots, v_m \} \subset \Ks^n$ is called \textit{almost linearly dependent}, if there are $\lambda_1, \ldots, \lambda_m \in \Ks \setminus \I$ such that 
    \[
        \sum_{i=1}^m \lambda_i \cdot v_i \simeq 0.
    \]
    and we call them \textit{linearly dependent} if it even holds true that $\sum_{i=1}^m \lambda_i \cdot v_i = 0$. Otherwise, we call them \textit{linearly independent}.
\end{definition}
\begin{definition}[Almost Equivalent]
    We call two objects $x,y \in \CsP^n$ \textit{almost equivalent}, if
    $\exists \lambda \in \Cs$ such that
    \[
        x_\A \simeq \lambda \cdot y_\A
    \]
which means that the appreciable representatives of $x$ and $y$ are almost linearly dependent. We write again $x \simeq y$.
\end{definition}
\begin{definition}[Projective Halo]
    For a point $x \in \CsP^n$ we define the \textit{projective halo $\phal$ of $\mathit{x}$} by 
    \[
        \phal(x):= \{ y \in \CsP^n \bbar x \simeq y\}.
    \]
\end{definition}
\begin{lemma}[Appreciable Scaling Factor]
   Let $x,y \in \CsP^n$ be almost equivalent, \ie $\exists \lambda \in \Cs \setminus \I$ such that $x_\A \simeq \lambda \cdot y_\A$. Then it holds true that $\lambda$ is appreciable, \ie $\lambda \in \A$.
\end{lemma}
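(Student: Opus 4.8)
The plan is to use the trichotomy for nonzero hypercomplex numbers and to rule out the only bad case, namely that $\lambda$ is unlimited. By hypothesis $\lambda \in \Cs \setminus \I$, so $\lambda$ is nonzero and non-infinitesimal; by the classification underlying \autoref{nsa_basics:arith} (and the $\Ks$-set definitions, where $\C_\infty := \Cs \setminus \{\I_\C \cup \A_\C\}$) it is therefore either appreciable or unlimited. Hence it suffices to exclude $\lambda \in \C_\infty$.

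First I would reduce the vector relation to a single informative scalar equation. Since $y_\A$ is an appreciable representative, \autoref{nsa_pg:representative} guarantees an index $j$ with $(y_\A)_j \in \A$. The relation $x_\A \simeq \lambda \cdot y_\A$ means, via \autoref{nsa_pg:rep_norms} and \autoref{halo_and_matric}, that every component of $x_\A - \lambda y_\A$ is infinitesimal, so in particular
\[
    (x_\A)_j - \lambda (y_\A)_j \in \I, \qquad \text{so} \qquad \lambda (y_\A)_j \simeq (x_\A)_j .
\]
Because $x_\A$ is itself an appreciable representative, $(x_\A)_j$ is limited.

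Now I would argue by contradiction: suppose $\lambda \in \C_\infty$. Then $\lambda (y_\A)_j$ is the product of an unlimited number with an appreciable one, which is unlimited by \autoref{nsa_basics:arith}. But an unlimited number cannot be infinitely close to the limited number $(x_\A)_j$, since their difference is again unlimited (limited minus unlimited is unlimited) and hence not infinitesimal, contradicting the displayed relation. Therefore $\lambda \notin \C_\infty$, and together with $\lambda \notin \I$ this forces $\lambda \in \A$, as claimed.

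The hard part is not any computation but choosing the right component: one must single out an \emph{appreciable} entry of $y_\A$ rather than an arbitrary one. If $(y_\A)_j$ happened to be infinitesimal, then $\lambda (y_\A)_j$ could well be limited even for an unlimited $\lambda$ (an indeterminate $\epsilon \cdot H$ form, cf.\ \autoref{nsa_basics:arith}), and no contradiction would arise. The existence of at least one appreciable entry, which is precisely what the definition of an appreciable representative provides, is exactly the hypothesis that makes the argument close.
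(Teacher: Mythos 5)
Your proof is correct, but it follows a different route from the paper's. The paper's proof is a one-line norm computation: from $x_\A \simeq \lambda\, y_\A$ it deduces $\norm{x_\A} \simeq \abs{\lambda}\,\norm{y_\A}$ (implicitly via the reverse triangle inequality), hence $\abs{\lambda} \simeq \norm{x_\A}/\norm{y_\A}$, which is a quotient of appreciable numbers and therefore appreciable. You instead work component-wise: you isolate an appreciable entry $(y_\A)_j$, observe $\lambda (y_\A)_j \simeq (x_\A)_j$ with the right-hand side limited, and rule out $\lambda \in \C_\infty$ by the arithmetic of \autoref{nsa_basics:arith}, then invoke the trichotomy together with the hypothesis $\lambda \notin \I$. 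Both arguments are sound. The paper's version is shorter, yields the extra information $\abs{\lambda} \simeq \norm{x_\A}/\norm{y_\A}$, and in fact never needs the assumption $\lambda \notin \I$ (appreciability of the quotient already excludes the infinitesimal case, so that hypothesis is redundant); your version is more elementary in that it avoids the norm-continuity step and makes explicit exactly where each hypothesis is used --- in particular your closing remark correctly identifies that the appreciable entry of $y_\A$ is what prevents the indeterminate $\epsilon \cdot H$ form, which is the genuine content of the lemma.
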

\begin{proof}
    \[
        x_\A \simeq \lambda \cdot y_\A \Rightarrow \norm{ x_\A } \simeq \abs{\lambda} \norm{y_\A} \\
        \Rightarrow \abs{\lambda} \simeq \frac{\norm{ x_\A }}{\norm{ y_\A }} \in \A. 
    \]
\end{proof}
\begin{lemma}[Almost Equivalent and Angles]
    \label{nsa_pg:aeq_angles}
Two elements $p, p' \in \CsP^2$ are almost equivalent, if and only if the angle $\alpha$ between $p$ and $p'$, interpreted as vectors in $\Cs^3$, is infinitesimal or infinitely close to $\pi$.
\end{lemma}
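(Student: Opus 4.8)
The plan is to route everything through the projective shadow and the equality case of the Cauchy--Schwarz inequality. First I would fix appreciable representatives $p_\A, p'_\A$ and define the cosine of the angle by the usual formula
\[
\cos\alpha := \frac{\langle p_\A, p'_\A\rangle}{\norm{p_\A}\,\norm{p'_\A}}.
\]
Because the representatives are appreciable, both norms lie in $\A$ and, by the Cauchy--Schwarz inequality transferred to $\Cs^3$ (as already used in \autoref{nsa_pg:lim_entry}), the numerator is limited; hence $\cos\alpha$ is a limited number and $\sh(\cos\alpha)$ is defined. Pushing the shadow through the sum, product, conjugation and square root that make up this expression via \autoref{nsa_basics:sh_prop} shows that $\sh(\cos\alpha)$ is exactly the (complex) cosine of the angle between the standard vectors $\sh(p_\A)$ and $\sh(p'_\A)$, which are representatives of $\psh(p)$ and $\psh(p')$ in $\CP^2$. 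Since $\cos$ is continuous, ``$\alpha$ is infinitesimal or infinitely close to $\pi$'' is the non-standard rendering of $|\sh(\cos\alpha)| = 1$, i.e. of equality in Cauchy--Schwarz for the two shadow vectors.

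The core of the argument is then the standard equality case: in $\C^3$ one has $|\langle u,v\rangle| = \norm{u}\,\norm{v}$ if and only if $u$ and $v$ are linearly dependent. Applying this to $u = \sh(p_\A)$ and $v = \sh(p'_\A)$ collapses the angle condition into the single statement $\psh(p) = \psh(p')$ in $\CP^2$. It therefore suffices to prove the equivalence
\[
p \simeq p' \iff \psh(p) = \psh(p').
\]
For the direction ``$\Rightarrow$'': if $p_\A \simeq \lambda\, p'_\A$, then $\lambda$ is appreciable by the preceding Appreciable Scaling Factor lemma, so $\sh(p_\A) = \sh(\lambda)\,\sh(p'_\A)$ with $\sh(\lambda)\neq 0$ (\autoref{nsa_basics:sh_prop}), and the two shadows define the same point of $\CP^2$. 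For ``$\Leftarrow$'': if $\psh(p) = \psh(p')$, there is $c \in \C\setminus\{0\}$ with $\sh(p_\A) = c\,\sh(p'_\A)$; since both representatives are limited, $\sh(p_\A - c\,p'_\A) = 0$, so $p_\A \simeq c\, p'_\A$ with $c$ appreciable, which is precisely almost equivalence.

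Two points need care. First, the quantity $\cos\alpha$ genuinely depends on the chosen appreciable representatives --- replacing $p_\A$ by $\iota\, p_\A$ with $\iota\in\A$ multiplies it by a unit phase --- so I would either argue, exactly as in \autoref{nsa_pg:i_well_def} using \autoref{nsa_pg:diff_repr}, that its \emph{modulus} (hence the truth of ``$\alpha\simeq 0$ or $\alpha\simeq\pi$'') is representative-independent, or simply note that the reformulation in terms of $\psh(p)$ and $\psh(p')$ is manifestly well defined. Second, and this is the main obstacle, is the complex subtlety of the angle: over $\C$ the Cauchy--Schwarz ratio has modulus at most $1$, and a genuine scalar multiple $p'=\lambda p$ yields $\sh(\cos\alpha)=\overline{\lambda}/|\lambda|$, a unit complex number rather than $\pm 1$. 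The honest reading of the statement is thus that $\sh(\cos\alpha)$ has modulus one, which is what ``infinitesimal angle, or angle infinitely close to $\pi$'' encodes once the angle is interpreted through the Hermitian cosine. Making this phrasing line up cleanly with the equality case of Cauchy--Schwarz, and checking that the degenerate subcase (a vanishing shadow component) is still captured by the linear-dependence criterion, is where the real work sits; everything else is shadow bookkeeping via \autoref{nsa_basics:sh_prop}.
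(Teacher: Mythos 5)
Your proof is correct in substance but takes a genuinely different route from the paper. The paper never leaves $\Cs^3$: it picks appreciable representatives, invokes the transferred identity $\norm{p_\A \times p'_\A} = \abs{d}\,\norm{p_\A/\norm{p_\A}}\,\norm{p'_\A/\norm{p'_\A}}\sin(\alpha)$ with an appreciable factor $d$ coming from \autoref{nsa_pg:diff_repr}, and concludes from the behaviour of $\sin$ that the cross-product is infinitesimal exactly when $\alpha \simeq 0$ or $\alpha \simeq \pi$; the shadow map is never used. You instead go through the Hermitian cosine, push the shadow through it via \autoref{nsa_basics:sh_prop}, and reduce the whole statement to the equality case of Cauchy--Schwarz for the standard vectors $\sh(p_\A), \sh(p'_\A)$, together with the equivalence $p \simeq p' \Leftrightarrow \psh(p) = \psh(p')$, which you prove cleanly in both directions. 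The two routes are linked by the complex Lagrange identity $\norm{u \times v}^2 = \norm{u}^2\norm{v}^2 - \abs{\langle u,v\rangle}^2$: vanishing of the (shadow of the) cross-product is exactly equality in Cauchy--Schwarz for the shadows, so your ``linear dependence of the shadows'' and the paper's ``infinitesimal cross-product'' are the same condition. What your version buys is an explicit, reusable intermediate fact ($p \simeq p'$ iff the projective shadows agree) and a transparent well-definedness argument; what it costs is the extra shadow bookkeeping and a head-on collision with the ambiguity of ``the angle'' over $\C$, which you flag honestly. Note that this ambiguity is present in the paper's proof too: the sine identity above forces the Hermitian angle $\alpha \in [0,\pi/2]$, for which the case $\alpha \simeq \pi$ never actually occurs, so your cosine-modulus reading is if anything the more defensible interpretation. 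The one place you stop short --- making ``$\alpha\simeq 0$ or $\alpha\simeq\pi$'' rigorously equivalent to $\abs{\sh(\cos\alpha)}=1$ --- is a two-line continuity argument (take the shadow of $\alpha$ in $[0,\pi]$ and use that $\abs{\cos}$ equals $1$ only at the endpoints), so it is not a genuine gap.
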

\begin{proof}
Pick appreciable representatives $p_\A$ and $p'_\A$ of $p$ and $p'$. Then by definition there are $c, c' \in \A$ such that 
    \[
        p_\A = \frac{p}{c \norm{p}} \quad \text{and} \quad p'_\A = \frac{p}{c' \norm{p'}}.
    \]
   Since $p,p'$ are almost equivalent there is a $\lambda \in \A$ such that $p' \simeq \lambda p$. Then it holds true 
    \[
        p_\A \times p'_\A =   \frac{p}{c \norm{p}} \times  \frac{p'}{c' \norm{p'}} = \underbrace{\frac{1}{c \cdot c'}}_{=:d} \left( \frac{p}{\norm{p}} \times  \frac{p'}{\norm{p'}} \right). 
    \]
   Note that $d \in \A$ since the product of appreciable numbers is appreciable and also its inverse.

By the properties of the scalar product and transfer we know:
\[
   \norm{p_\A \times p'_\A} = \abs{d} \norm{ \frac{p}{\norm{p}}} \norm{ \frac{p'}{\norm{p'}} } \sin(\alpha) = \abs{d} \sin(\alpha).
\]
The sine function is infinitesimal if and only of $\alpha \simeq 0$ or $\alpha \simeq \pi$, which can be easily seen by the series expansion. This is the claim.
\end{proof}
\begin{lemma}[Cross-Product and Almost Equivalent]
For $x,y \in \CsP^2$ the appreciable cross-product $x \times_* y$ is infinitesimal if and only if $x\simeq y$.
\end{lemma}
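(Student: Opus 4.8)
The plan is to reduce everything to the angle characterization already established in \autoref{nsa_pg:aeq_angles}. First I would fix appreciable representatives $x_\A, y_\A$ of $x$ and $y$; by \autoref{nsa_pg:cross_well_defined} the value $x \times_* y = x_\A \times y_\A$ does not depend on this choice up to an appreciable factor, so it suffices to decide infinitesimality for one fixed choice. By \autoref{nsa_pg:rep_norms}, the vector $x_\A \times y_\A$ is infinitesimal if and only if its Euclidean norm $\norm{x_\A \times y_\A}$ is infinitesimal, so the entire statement becomes a statement about this single scalar.

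Next I would invoke the cross-product norm identity that already appears in the proof of \autoref{nsa_pg:aeq_angles}: writing $\alpha$ for the angle between $x_\A$ and $y_\A$ viewed as vectors in $\Cs^3$, one has $\norm{x_\A \times y_\A} = \norm{x_\A}\,\norm{y_\A}\,\sin(\alpha)$. Since $x_\A, y_\A$ are appreciable representatives, their norms are appreciable, and by the product rules of \autoref{nsa_basics:arith} the factor $\norm{x_\A}\norm{y_\A}$ is appreciable as well. Hence, again by \autoref{nsa_basics:arith} (an appreciable number times $t$ is infinitesimal exactly when $t$ is), we get $\norm{x_\A \times y_\A} \in \I$ if and only if $\sin(\alpha) \in \I$.

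Finally I would close the chain with the series-expansion observation used in \autoref{nsa_pg:aeq_angles}: $\sin(\alpha)$ is infinitesimal precisely when $\alpha \simeq 0$ or $\alpha \simeq \pi$, and by \autoref{nsa_pg:aeq_angles} this last condition is equivalent to $x \simeq y$. Concatenating the equivalences
\[
x \times_* y \in \I \;\Longleftrightarrow\; \norm{x_\A \times y_\A} \in \I \;\Longleftrightarrow\; \sin(\alpha) \in \I \;\Longleftrightarrow\; \alpha \simeq 0 \text{ or } \alpha \simeq \pi \;\Longleftrightarrow\; x \simeq y
\]
yields the claim in both directions simultaneously. The one point that needs care—and which I expect to be the main obstacle—is the cross-product norm identity over $\Cs$: the notions of norm, Hermitian angle, and the formal cross product for complex vectors do not interact as transparently as in $\R^3$, so I would want to ensure the identity $\norm{x_\A \times y_\A} = \norm{x_\A}\norm{y_\A}\sin(\alpha)$ is exactly the one obtained by universal transfer of the corresponding standard identity (as already used in \autoref{nsa_pg:aeq_angles}) rather than a naive real analogue. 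Everything else is a routine recombination of \autoref{nsa_basics:arith} with the two preceding lemmas.
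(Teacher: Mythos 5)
Your argument is correct, and for the converse direction (infinitesimal cross-product $\Rightarrow$ $x \simeq y$) it coincides with what the paper does: appeal to \autoref{nsa_pg:aeq_angles} via the identity $\norm{x_\A \times y_\A} = \norm{x_\A}\,\norm{y_\A}\,\sin(\alpha)$. Where you genuinely diverge is the forward direction. The paper does not go through angles there at all: it takes the definition of almost equivalent, writes $x_\A \simeq \lambda \cdot y_\A$ with $\lambda$ appreciable, and substitutes into the bilinear cross-product to get $x_\A \times y_\A \simeq \lambda \cdot y_\A \times y_\A = 0$. That substitution argument needs only \autoref{nsa_basics:arith} and bilinearity, so it is more elementary; your single chain of equivalences is more uniform and dispatches both directions at once. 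The cost of that uniformity is that your entire proof now rests on the complex cross-product norm identity and on \autoref{nsa_pg:aeq_angles} — exactly the delicate ingredient you flag yourself, and one whose justification in the paper (``by the properties of the scalar product and transfer'') is terse on precisely the point you raise about Hermitian angles in $\Cs^3$. The paper's split keeps that dependence confined to one implication. Neither route is wrong; yours simply concentrates all the weight on the angle lemma, so its validity stands or falls with that lemma's, while the paper's forward direction would survive even if the sine identity needed repair.
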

\begin{proof}
   Let $x \simeq y$ then there exists $\lambda \in \Cs \setminus \I$ such that $x_\A \simeq \lambda \cdot y_\A$. Then it follows:
\begin{align*}
    x \times_* y &= x_\A \times y_\A \simeq \lambda \cdot y_\A \times y_\A = 0 \\  &\Rightarrow  x \times_* y \in \I
\end{align*}
   The other direction of the proof follows from \autoref{nsa_pg:aeq_angles}: two vectors are almost linearly dependent, if the angle between the vectors is either infinitesimal or infinitely close to $\pi$, thus if their appreciable cross-product is an infinitesimal vector.
\end{proof}
The next example shows that we cannot neglect the representative of an equivalence class for the cross-product.
\begin{example}[Appreciable Cross-Product]
    \label{nsa_pg:appr_cross_expl}
   Let $H \in \Rs_\infty$ and define  
   \[
       x = \begin{pmatrix} H\\0\\1\end{pmatrix}, \quad y = \begin{pmatrix} 1\\0\\0\end{pmatrix}
   \]
   Both represent the same equivalence class in $\RP^2$ which we see by their projective shadows 
   \[
   \psh( x) = \sh \begin{pmatrix} 1\\0\\\frac{1}{H}\end{pmatrix} = \begin{pmatrix} 1\\0\\0\end{pmatrix}, \quad \psh(y) = y = \begin{pmatrix} 1\\0\\0\end{pmatrix}
   \]
   So the unnormalized cross-product should resemble the zero vector, but as we can see
   \[
   x \times y = \begin{pmatrix} 0\\1\\0\end{pmatrix}, \quad  x \times_{\sh} y = \sh \begin{pmatrix} 0\\ \frac{1}{H} \\ 0 \end{pmatrix} = \begin{pmatrix} 0\\ 0 \\ 0 \end{pmatrix}.
   \]
the cross-product yields a wrong result, while the shadow cross-product is correct.
\end{example}
Now we will come back to relations of points and lines. One important concept is collinearity of three points. We will generalize this to a further ``almost'' relation.
\begin{definition}[Almost Collinear]
    \label{nsa_pg:almost_collinear}
    We call three points $x,y,z \in \CsP^2$ \textit{almost collinear}, 
    if the point $x$ is almost incident to the line $l:=\join_*(y,z)$, \ie 
    \[
        \langle x, l \rangle_* = \langle x_\A, l_\A \rangle \in \I.
    \]
\end{definition}
\begin{lemma}[Almost Collinear is Well-defined] 
    The definition of being almost collinear in \autoref{nsa_pg:almost_collinear} is well defined.
\end{lemma}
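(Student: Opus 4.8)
The plan is to reduce the well-definedness of almost collinearity to two facts already proved in the excerpt: that the appreciable join is a well-defined element of $\CsP^2$, and that the relation $\langle\cdot,\cdot\rangle_* \in \I$ does not depend on the chosen appreciable representatives. The only choices entering \autoref{nsa_pg:almost_collinear} are the appreciable representatives $x_\A$, $y_\A$, $z_\A$: the point $x$ is used only in the appreciable scalar product $\langle x, l\rangle_*$, while $y$ and $z$ are used first to form the line $l := \join_*(y,z) = y_\A \times z_\A$ and then again, through $l$, in that same scalar product. So I would show separately that the passage $y,z \mapsto l$ and the passage $x,l \mapsto \big(\langle x,l\rangle_* \in \I\big)$ are each representative-independent, and then chain them.

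First I would argue that $l := \join_*(y,z)$ is a well-defined element of $\CsP^2$. If $\hat y_\A, \hat z_\A$ are other appreciable representatives of $y$ and $z$, then by \autoref{nsa_pg:diff_repr} there are unique $c,d \in \A$ with $\hat y_\A = c\,y_\A$ and $\hat z_\A = d\,z_\A$, and by bilinearity of the cross-product (as in \autoref{nsa_pg:cross_well_defined}) we get $\hat y_\A \times \hat z_\A = (c d)\,(y_\A \times z_\A)$ with $cd \in \A$. Hence the equivalence class $[l] \in \CsP^2$ is unchanged, even though the particular vector representing it may rescale by the appreciable factor $cd$. Whenever $y$ and $z$ are distinct points their appreciable representatives are linearly independent, so $l \neq 0$ and $l$ is a genuine element of $\CsP^2$.

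Finally, since $l$ is a well-defined element of $\CsP^2$ and $x \in \CsP^2$, the membership $\langle x, l\rangle_* \in \I$ is well-defined by \autoref{nsa_pg:i_well_def}; combining this with the previous paragraph gives that almost collinearity is well-defined, exactly as the earlier well-definedness lemmas (for almost incidence and almost parallelism) are obtained as special cases of \autoref{nsa_pg:i_well_def}. The only point that requires care — rather than a genuine obstacle — is the bookkeeping that $\join_*(y,z)$ feeds a legitimate $\CsP^2$ element into the appreciable scalar product even when the join itself fails to be appreciable (recall a join need not be appreciable); this causes no trouble, because the $\I$-relation is evaluated through an appreciable representative $l_\A$ of $[l]$, and such a representative exists for every nonzero $l$. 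Consequently no new estimates are needed, and the statement follows purely by invoking \autoref{nsa_pg:cross_well_defined}, \autoref{nsa_pg:diff_repr}, and \autoref{nsa_pg:i_well_def}.
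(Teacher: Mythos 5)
Your proof is correct and follows the same route as the paper, which simply declares the statement a special case of \autoref{nsa_pg:i_well_def}; you merely make explicit the intermediate bookkeeping (via \autoref{nsa_pg:diff_repr} and the bilinearity of the cross-product) that the join $l=\join_*(y,z)$ is itself a well-defined element of $\CsP^2$ before that theorem is applied. This added detail is a faithful elaboration of the paper's one-line argument, not a different approach.
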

\begin{proof}
    Special case of \autoref{nsa_pg:i_well_def}.
\end{proof}
The canonical way to check for collinearity is usually the determinant, which we will now define in an normalized version.
\begin{definition}[Normalized Determinant]
    \label{nsa_pg:norm_det}
    Let $x,y,z$ be in three distinct objects in $\CsP^2$. We define the \textit{normalized determinant of $\mathit{x,y,z}$} by
    \[
        \det_{\normsymb}[x,y,z] = \frac{\det[x_\A, y_\A, z_\A]}{\lambda}, \quad \lambda \in \magni{(\norm{y_\A \times z_\A})}
        \]
        where $x_\A, y_\A, z_\A$ are appreciable representatives of $x,y,z$ and $\det$ the standard definition of the determinant.
\end{definition}
\begin{remark}
   The formula might look a bit asymmetric but term we are dividing by assures that the cross-product of $y_\A$ and $z_\A$ is again appreciable.
\end{remark}
\begin{lemma}[Determinats, Scalar-Product and Cross-Product]
    \label{nsa_pg:det_scal_cross}
   For  $x,y,z \in \CsP^2$ and fixed appreciable representatives of $x_\A, y_\A, z_\A$ of $x,y,z$. 
   Then it holds true:
   \[
       \det_{\normsymb}[x,y,z] = \langle x, y \times_* z \rangle_*
   \]
   where $\langle \cdot, \cdot \rangle_*$ denotes the appreciable scalar product and $(\cdot \times_* \cdot)$ the appreciable cross product.
\end{lemma}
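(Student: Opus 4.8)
The plan is to prove the identity by a direct unfolding of both sides against the fixed appreciable representatives $x_\A, y_\A, z_\A$, reducing everything to the classical scalar-triple-product formula transferred to the hypercomplex setting. Since the representatives are fixed in the hypothesis, well-definedness is not at issue here (it is already handled by \autoref{nsa_pg:i_well_def} and \autoref{nsa_pg:cross_well_defined}); what remains is a bookkeeping identity, with the only genuine care needed in tracking the normalization factor $\lambda$.

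First I would unfold the right-hand side. By \autoref{nsa_pg:cross-prod} the object $y \times_* z$ is the concrete vector $y_\A \times z_\A$, which need not be appreciable. To form the appreciable scalar product $\langle x, y \times_* z \rangle_*$ of \autoref{nsa_pg:scal_prod} I must supply an appreciable representative of $y \times_* z$; choosing $\lambda \in \magni(\norm{y_\A \times z_\A})$ exactly as in \autoref{nsa_pg:norm_det}, the vector $\tfrac{1}{\lambda}(y_\A \times z_\A)$ is appreciable by \autoref{nsa_pg:lim_entry}. Using the same $\lambda$ on both sides, linearity of the scalar product in the relevant argument gives
\[
\langle x, y \times_* z \rangle_* = \left\langle x_\A, \tfrac{1}{\lambda}(y_\A \times z_\A) \right\rangle = \tfrac{1}{\lambda}\,\langle x_\A, y_\A \times z_\A \rangle.
\]

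Next I would invoke the standard scalar-triple-product identity $\langle a, b \times c \rangle = \det[a,b,c]$, which holds in $\C^3$ and hence, by universal transfer (\autoref{nsa_basics:transfer}), for the appreciable representatives in $\Cs^3$; applying it yields $\langle x_\A, y_\A \times z_\A \rangle = \det[x_\A, y_\A, z_\A]$. Substituting back gives $\tfrac{1}{\lambda}\det[x_\A, y_\A, z_\A]$, which is precisely $\det_{\normsymb}[x,y,z]$ by \autoref{nsa_pg:norm_det}, completing the proof.

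The main obstacle is not computational but one of consistency: I must ensure that the appreciable representative chosen for $y \times_* z$ on the right is normalized by the same $\lambda$ appearing in the determinant definition on the left, so that the two $\tfrac{1}{\lambda}$ factors coincide rather than differing by some appreciable constant. A secondary point to watch is the placement of $\lambda$ in the (conjugate-)linear slot of the scalar product, so that it factors out as $\tfrac{1}{\lambda}$ and not $\tfrac{1}{\bar\lambda}$; once the identity is phrased to put the cross-product in the linear argument, this is immediate and the equality is exact.
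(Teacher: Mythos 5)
Your proof is correct and follows essentially the same route as the paper's: both reduce the identity to the classical scalar-triple-product formula $\det[a,b,c]=\langle a, b\times c\rangle$ for the fixed appreciable representatives and then pull the normalization factor $\tfrac{1}{\lambda}$, $\lambda \in \magni(\norm{y_\A \times z_\A})$, through the scalar product; you merely traverse the chain of equalities from right to left where the paper goes left to right. Your explicit remarks on using the \emph{same} $\lambda$ on both sides and on which slot of the complex scalar product is conjugate-linear are sensible precautions that the paper leaves implicit.
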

\begin{proof}
    Since \[
        \det_{\normsymb}[x,y,z] = \frac{\det[x_\A, y_\A, z_\A]}{\lambda}, \quad \lambda \in \magni{(\norm{y_\A \times z_\A})}
    \]
        we can rewrite the latter, using the rewrite rule for the ``normal'' determinant, to
    \[
        \det[x_\A, y_\A, z_\A] = \langle x_\A,  y_\A \times z_\A \rangle     
    \]
    Then it holds true for $\lambda \in \magni{(\norm{y_\A \times z_\A})}$:
    \begin{align*}
        \det_*[x,y,z] &= \frac{\det[x_\A, y_\A, z_\A]}{\lambda} = \frac{\langle x_\A,  y_\A \times z_\A \rangle }{\lambda}\\ &= \langle x_\A, \frac{ y_\A \times z_\A}{{\lambda}} \rangle = \langle x, y \times_* z \rangle_*.
    \end{align*}
   \end{proof}
   \begin{corollary}[Collinearity and the Normalized Determinat]
      The distinct points $x,y,z$ in $\CsP^2$ are almost are almost collinear if and only if 
      \[
     \det_{\normsymb}[x,y,z] \in \I.
    \]
   \end{corollary}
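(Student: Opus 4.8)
The plan is to reduce the corollary directly to \autoref{nsa_pg:det_scal_cross} together with the definition of almost collinearity, so that essentially no new computation is required. First I would unfold \autoref{nsa_pg:almost_collinear}: by definition the distinct points $x,y,z$ are almost collinear if and only if $x$ is almost incident to the line $l := \join_*(y,z)$, which means precisely $\langle x, l \rangle_* \in \I$. Since the appreciable join is defined by $\join_*(y,z) = y \times_* z$, this condition reads literally as $\langle x, y \times_* z \rangle_* \in \I$.

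Next I would invoke \autoref{nsa_pg:det_scal_cross}, which supplies the identity $\det_{\normsymb}[x,y,z] = \langle x, y \times_* z \rangle_*$ for fixed appreciable representatives $x_\A, y_\A, z_\A$. Substituting this identity into the criterion above immediately yields that $x,y,z$ are almost collinear if and only if $\det_{\normsymb}[x,y,z] \in \I$, which is exactly the claim. The entire argument is thus a two-step chaining: the definition of almost collinearity expresses the property as an infinitesimal appreciable scalar product, and the preceding lemma identifies that scalar product with the normalized determinant.

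The one point that deserves care — and which I expect to be the only mild obstacle — is well-definedness. The value $\det_{\normsymb}[x,y,z]$ depends on the choice of appreciable representatives and on the scaling factor $\lambda \in \magni(\norm{y_\A \times z_\A})$, so it is not canonical as a number. However, the statement asserts only membership in $\I$, not a specific value, and by \autoref{nsa_pg:i_well_def} the relation $\langle \cdot, \cdot \rangle_* \in \I$ is independent of the chosen appreciable representatives. Combining this with the identity from \autoref{nsa_pg:det_scal_cross} shows that $\det_{\normsymb}[x,y,z] \in \I$ is itself a well-defined relation, so the equivalence holds regardless of which representatives are used to evaluate the determinant. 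No genuine estimation is needed; the content is carried entirely by the earlier lemma and the well-definedness theorem.
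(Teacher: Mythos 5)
Your proof is correct and follows exactly the paper's route: the paper also proves this corollary by combining \autoref{nsa_pg:det_scal_cross} with the definition of almost collinearity. Your additional remark on well-definedness via \autoref{nsa_pg:i_well_def} is a sensible precaution that the paper leaves implicit.
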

   \begin{proof}
By \autoref{nsa_pg:det_scal_cross} and the definition of being almost collinear.
   \end{proof}
\begin{definition}[Appreciable Determinant]
    \label{nsa_pg:appr_det}
    Let $x,y,z$ be in three distinct vectors in $\CsP^2$. We define the \textit{appreciable determinant of $\mathit{x,y,z}$} by
    \[
        \det_{*}[x,y,z] := \det[x_\A, y_\A, z_\A]
        \]
        where $x_\A, y_\A, z_\A$ are appreciable representatives of $x,y,z$ and $\det$ the standard definition of the determinant.
\end{definition}
\begin{lemma}[Appreciable Determinant and Almost Collinear]
    \label{nsa_pg:appr_det_ac}
    Let $x,y,z$ be three points in $\CsP^2$ which are pairwise not almost equivalent. Then $x,y,z$ are almost collinear if and only if $\displaystyle{\det_{*}[x,y,z] \in \I}$.
\end{lemma}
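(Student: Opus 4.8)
The plan is to reduce the claim to the already-proved corollary characterizing almost collinearity via the normalized determinant, by showing that under the non-degeneracy hypothesis the appreciable determinant and the normalized determinant differ only by an appreciable factor.

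First I would record the relation between the two quantities. Fixing one choice of appreciable representatives $x_\A, y_\A, z_\A$ and comparing \autoref{nsa_pg:appr_det} with \autoref{nsa_pg:norm_det}, I obtain
\[
\det_{*}[x,y,z] = \lambda \cdot \det_{\normsymb}[x,y,z], \qquad \lambda \in \magni(\norm{y_\A \times z_\A}).
\]
The whole argument then hinges on establishing $\lambda \in \A$.

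The key step is to exploit that $x,y,z$ are pairwise not almost equivalent, in particular $y \not\simeq z$. By the lemma ``Cross-Product and Almost Equivalent'' this forces $y \times_* z = y_\A \times z_\A \notin \I$. Since $y_\A, z_\A$ are appreciable, hence limited, the vector $y_\A \times z_\A$ is limited; being limited and not infinitesimal it is appreciable, so $\norm{y_\A \times z_\A} \in \A$ by \autoref{nsa_pg:rep_norms}. I would then note that the magnitude of an appreciable number is exactly $\A$: if $r \in \A$ and $s \in \magni(r)$, then $r = A s$ with $A \in \A$, so $s = r/A \in \A$; conversely any $s \in \A$ lies in $\magni(r)$ with witness $A := r/s \in \A$, using that quotients of appreciables are appreciable (\autoref{nsa_basics:arith}). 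Hence $\lambda \in \A$.

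Finally, with $\lambda$ appreciable the equivalence is immediate from the arithmetic of infinitesimals. If $\det_{\normsymb}[x,y,z] \in \I$ then $\det_{*}[x,y,z] = \lambda \cdot \det_{\normsymb}[x,y,z]$ is an appreciable times an infinitesimal and hence infinitesimal, while conversely $\det_{\normsymb}[x,y,z] = \det_{*}[x,y,z]/\lambda$ is an infinitesimal over an appreciable and hence infinitesimal (both by \autoref{nsa_basics:arith}); this also shows the property is independent of the chosen representatives. Therefore $\det_{*}[x,y,z] \in \I \Leftrightarrow \det_{\normsymb}[x,y,z] \in \I$, and the right-hand relation is equivalent to almost collinearity by the corollary ``Collinearity and the Normalized Determinat''. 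The main obstacle is precisely the appreciability of $\lambda$: this is where the hypothesis $y \not\simeq z$ is indispensable, for if $y \simeq z$ the normalizing factor could be infinitesimal and the two notions of determinant would cease to share the property of being infinitesimal.
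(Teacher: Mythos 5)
Your proposal is correct and follows essentially the same route as the paper: the paper's (much terser) proof likewise reduces to the identity $\det[x_\A,y_\A,z_\A]=\langle x_\A, y_\A\times z_\A\rangle$ and observes that the missing normalization factor $\lambda\in\magni(\norm{y_\A\times z_\A})$ is harmless because $y\not\simeq z$ forces $y_\A\times z_\A$ to be appreciable. Your write-up simply makes explicit the steps the paper leaves implicit (limited $+$ not infinitesimal $\Rightarrow$ appreciable, and $\magni(r)=\A$ for appreciable $r$), which is a faithful elaboration rather than a different argument.
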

\begin{proof}
    Special case of the proof of \autoref{nsa_pg:det_scal_cross}. The lack of the normalization using $\lambda \in \magni{(\norm{y_\A \times z_\A})}$ is negligible since if $y$ and $z$ are not almost equivalent then their cross product yields an appreciable vector.
\end{proof}
       One might wonder why we defined two different types of determinants: the appreciable and the normalized one. Both have their purpose and are can be used to determine if points are almost collinear, but the appreciable determinant might give false positives if two (or more) points are almost equivalent. We will illustrate this with an example:
   \begin{example}[Almost Collinear and Normalized Determinants]
      Pick an arbitrary $\epsilon \in \I$ and define the points
      \[
          x = \begin{pmatrix}
             1 \\ 0 \\1 
          \end{pmatrix}\quad 
          y = \begin{pmatrix}
             \epsilon \\ 0 \\1 
          \end{pmatrix} \quad
          z = \begin{pmatrix}
             0 \\ \epsilon  \\1 
          \end{pmatrix}
      \]
      These points are \textit{not} almost collinear, the join of $y$ and $z$ is the line $(\epsilon, \epsilon, -\epsilon^2)^T \sim (1,1, -\epsilon)^T$, which is almost equivalent to the angle bisector of $x$- and $y$-axis. In particular, observe that the appreciable join of $y$ and $z$ is infinitesimal and so $y$ and $z$ are almost equivalent (see \autoref{nsa_pg:aeq_angles}).

      The appreciable determinant gives a false positive incidence relation here: first note that $x,y,z$ are already appreciable and the appreciable determinant yields $\det_*[x,y,z] = -\epsilon(1-\epsilon)$, which is infinitesimal. This is not a contradiction to \autoref{nsa_pg:appr_det_ac} since $y$ and $z$ are almost equivalent. This resembles the fact that $y$ and $z$ have the same shadow and therefore the join of their shadows is not an element of the (standard) projective space.

      The normalized determinant yields the correct result:  $\det_{\normsymb}[x,y,z] =\frac{\det_*[x,y,z]}{\epsilon} = 1-\epsilon \not \in \I$ where we used that $\epsilon \in \magni(\norm{y \times z})$. This also corresponds to the definition of being almost incident:
      \[
          \langle x, l \rangle_* = \langle x_\A, l_\A \rangle = 1 - \epsilon \not \in \I.
      \]
   \end{example}
\section{Non-standard Projective Transformations}
This section will start out with basic properties of linear algebra over a hyper field. Since we are interested in a projective setting, we will directly identify a matrix $M$ with its appreciable representative, \ie its normalized version $\frac {M}{\norm{M}}$.
\begin{definition}[Appreciable Matrix]
    \label{nsa_pg:appr_matrix}
    Let $M \in \Cs^{m \times n}$ and not all entries of $M$ equal zero. Then we define an \textit{appreciable matrix representation $\mathit{M_\A}$ of M} by 
    \[
        M_\A:= \dfrac 1 \lambda M, \quad \lambda \in \magni{(\norm{A})} 
    \]
    where $\norm{\cdot}$ is defined as an arbitrary submultiplicative and self-adjoint matrix norm. If not stated otherwise we will use, for convenience, the Frobenius norm:
    \[
        \| M \|_F := \sqrt{\sum_{i=1}^m \sum_{j=1}^n |M_{i,j} |^2}.
    \]
\end{definition}
\begin{remark}
    It is important to have a submultiplicative matrix norm in order to use estimations like $\norm{Ax} \leq \norm{A}\norm{x}$. All important matrix norms are submultiplicative, \eg the spectral norm, natural norm, Frobenius norm or the (scaled) maximum norm. Note that only taking the absolute value of the absolute maximum of a matrix it \textit{not} submultiplicative! The self-adjoint property is also important: then transposition (and complex conjugation) do not change the matrix norm.
\end{remark}
\begin{lemma}[Appreciable Matrix Properties]
    Let $M_\A$ be an appreciable matrix. Then all entries of $M_\A$ are limited and at least one is appreciable.
\end{lemma}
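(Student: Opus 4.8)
The plan is to transport the vector statement \autoref{nsa_pg:lim_entry} to matrices, treating the Frobenius norm first and then reducing a general norm to it. Fix $M \in \Cs^{m\times n}$ and write $M_\A = \tfrac{1}{\lambda}M$ with $\lambda \in \magni(\norm{M})$; by the definition of magnitude there is an $A \in \A$ with $\norm{M} = |A|\,|\lambda|$. For the \emph{limited} claim I would let $\hat m$ be an entry of $M$ of maximal modulus. Since $\norm{M}_F^2 = \sum_{i,j}|M_{i,j}|^2 \ge |\hat m|^2$, we get $|\hat m| \le \norm{M}_F$, hence $|\hat m\,\lambda^{-1}| \le \norm{M}\,|\lambda|^{-1} = |A| \in \A$. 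Thus the entry of $M_\A$ of largest modulus is limited, and therefore so are all entries.

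For the \emph{appreciable} claim the key observation is that $\norm{M_\A}$ is itself appreciable: by absolute homogeneity of the norm, $\norm{M_\A} = \norm{\tfrac{1}{\lambda} M} = |\lambda|^{-1}\norm{M} = |A| \in \A$. Now suppose, for contradiction, that every entry of $M_\A$ were infinitesimal. Then $\norm{M_\A}_F = \sqrt{\sum_{i,j}|(M_\A)_{i,j}|^2}$ is a finite sum of squares of infinitesimals, hence infinitesimal by \autoref{nsa_basics:arith}, contradicting $\norm{M_\A} = |A| \in \A$. Hence at least one entry of $M_\A$ must be appreciable. This parallels the second half of \autoref{nsa_pg:lim_entry}, but is in fact slightly cleaner, since the appreciability of $\norm{M_\A}$ is obtained directly.

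The one point that is not immediate is that the lemma is stated for an arbitrary submultiplicative, self-adjoint matrix norm, whereas the two estimates above were written for the Frobenius norm. Here I would invoke a matrix analogue of \autoref{nsa_pg:equiv_norm}: on the finite-dimensional space $\C^{m\times n}$ all norms are equivalent with \emph{standard} real positive constants, so there are $a,A' \in \R^+\Smz$ with $a\,\norm{M}_F \le \norm{M} \le A'\,\norm{M}_F$. Exactly as in the proof that $\norm{z}_\infty \in \magni(\norm{z}_2)$, these standard constants force $\magni(\norm{M}) = \magni(\norm{M}_F)$, so the $\lambda$ in the hypothesis serves equally well for the Frobenius norm, and both the bound $|\hat m| \le \norm{M}_F$ and the implication ``all entries infinitesimal $\Rightarrow$ norm infinitesimal'' transfer verbatim. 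The main (and essentially only) obstacle is thus establishing this standard-constant equivalence; it is routine finite-dimensional linear algebra, parallel to \autoref{nsa_pg:equiv_norm}, and it is crucial that the constants be real rather than hyperreal, so that multiplication by them preserves the classes $\I$ and $\A$.
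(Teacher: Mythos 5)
Your proposal is correct and follows essentially the same route as the paper, whose entire proof is the single sentence ``Analogously to the proof of \autoref{nsa_pg:lim_entry}'' — you have simply carried out that analogy in full, with the same maximal-entry bound for limitedness and the same contradiction (all entries infinitesimal forces the norm to be infinitesimal) for appreciability. Your write-up is in fact somewhat more complete than the paper's, since you streamline the appreciability step via $\norm{M_\A}=|A|\in\A$ and you explicitly supply the reduction from an arbitrary submultiplicative self-adjoint norm to the Frobenius norm through standard-constant norm equivalence (the matrix analogue of \autoref{nsa_pg:equiv_norm} combined with \autoref{sing:magn_lemma}), a point the paper leaves implicit.
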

\begin{proof}
    Analogously to the proof of \autoref{nsa_pg:lim_entry}.
\end{proof}
Next we will define a conic section, or shortly conic, in the same way as J.\ Richter-Gebert's perpectives on projective geometry \cite{richter2011perspectives} \page 145 ff. Our conics are enlarged versions of the original definitions.
\begin{lemma}[Appreciable Matrix-Vector Product]
    \label{nsa_pg:appr_mv}
    Let $M_\A \in \Cs^{m \times n}$ be an appreciable matrix and $x_\A \in\CP^{n-1}$ an appreciable representative of $x \in \CP^{n-1}$. Then their the matrix--vector product 
    \[
        M_\A \cdot x_\A
    \]
    is limited.
\end{lemma}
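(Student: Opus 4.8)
The plan is to reduce this vectorial statement to a one-dimensional statement about norms and then exploit submultiplicativity of the chosen matrix norm together with the arithmetic of appreciable numbers from \autoref{nsa_basics:arith}. By \autoref{nsa_pg:rep_norms} a vector in $\Cs^m$ is limited precisely when its Euclidean norm lies in $\Lim$, so it suffices to show that $\norm{M_\A \cdot x_\A} \in \Lim$. First I would record that both factors have appreciable norm: since $x_\A$ is an appreciable representative, \autoref{nsa_pg:rep_norms} gives $\norm{x_\A} \in \A$, and since $M_\A$ is an appreciable matrix in the sense of \autoref{nsa_pg:appr_matrix}, the matrix analogue of the same statement (all entries limited with at least one appreciable, equivalently $\norm{M_\A} = \norm{M}/\abs{\lambda} \in \A$ for $\lambda \in \magni(\norm{M})$) gives $\norm{M_\A} \in \A$.

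The key inequality is the compatibility bound $\norm{M_\A \cdot x_\A} \le \norm{M_\A}\,\norm{x_\A}$, which holds for the Frobenius norm paired with the Euclidean vector norm. This is a statement about all standard matrices and vectors, hence by universal transfer (\autoref{nsa_basics:transfer}) it remains valid over $\Cs$. Applying it and then \autoref{nsa_basics:arith}, the right-hand side is a product of two appreciable numbers and is therefore appreciable, in particular limited. Since $0 \le \norm{M_\A \cdot x_\A} \le \norm{M_\A}\,\norm{x_\A}$ with the upper bound limited, the nonnegative hyperreal $\norm{M_\A \cdot x_\A}$ is itself limited. One more application of \autoref{nsa_pg:rep_norms} then yields that $M_\A \cdot x_\A$ is a limited vector, which is the claim.

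I expect the only genuine subtlety---rather than an obstacle---to be bookkeeping about which norms are being compared: one must make sure the matrix norm is submultiplicative and compatible with the vector norm used to measure $x_\A$ and the output, so that $\norm{M_\A \cdot x_\A} \le \norm{M_\A}\,\norm{x_\A}$ is legitimate. The Frobenius/Euclidean pairing fixed in \autoref{nsa_pg:appr_matrix} makes this automatic, but for an arbitrary self-adjoint submultiplicative norm one would invoke the equivalence of norms (\autoref{nsa_pg:equiv_norm}) to pass between them at the cost of appreciable constants, which does not affect limitedness. The remaining elementary step---that a nonnegative hyperreal dominated by a limited number is itself limited---follows directly from the definition of $\Lim$.
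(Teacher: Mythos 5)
Your proof is correct, but it takes a genuinely different route from the paper's. The paper argues entry-wise: each component of $M_\A \cdot x_\A$ is a sum of products of limited entries of $M_\A$ and $x_\A$, and sums and products of limited numbers are limited by \autoref{nsa_basics:arith}, so every component of the result is limited --- a one-line argument needing no norms, no compatibility inequality, and no transfer. You instead pass to norms, invoke the compatibility bound $\norm{M_\A \cdot x_\A} \le \norm{M_\A}\,\norm{x_\A}$ via universal transfer, and use that both factors are appreciable. Both arguments are sound; yours requires slightly more machinery (the transferred Frobenius--Euclidean compatibility inequality and \autoref{nsa_pg:rep_norms} at both ends) but in exchange delivers a quantitative conclusion the paper's proof does not state, namely that $\norm{M_\A \cdot x_\A}$ is bounded above by an appreciable number --- which is exactly the estimate the paper later needs in \autoref{nsa_pg:appr_image}, where it rules out an unlimited image by essentially your argument. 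Your closing remark about norm bookkeeping is apt but, as you note, harmless: the equivalence of norms (\autoref{nsa_pg:equiv_norm}) with real constants means the choice of self-adjoint submultiplicative norm cannot affect limitedness.
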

\begin{proof}
    All entries of $M_\A$ and $x_\A$ are appreciable, products and sums of appreciable numbers are limited according to \autoref{nsa_basics:arith}.
\end{proof}
\begin{definition}[Almost Singular]
    We call a matrix $M \in \Ks^{3\times 3} \Smz$ \textit{almost singular} it the determinant of its appreciable representative $M_\A = \frac{1}{\lambda} M$, $\lambda \in \magni{(\norm{M})}$ is infinitesimal but not zero, \ie
    \[
        \det(M_\A) \in \I \Smz.
    \]
    If the $\det(M_\A) = 0$, we call $M$ \textit{singular}, and \textit{non-singular} if $\det(M_\A)$ is appreciable.
\end{definition}
\begin{remark}
    If $M \in \mathrm{GL}_3(\Ks)$ then $M$ non-singular or almost singular. One can easily see that if one applies the Gaussian algorithm to decompose $M$ into an upper and lower triangular matrix and uses the equivalence of full rank and $\det(\cdot) \neq 0$.
\end{remark}
\begin{remark}
    A.\ Leitner chose a different approach for (in our terms) non-singular projective transformations: she called a hyperreal projective transformations $A \in  \operatorname{PGL}_n(\Rs)$ (where $\operatorname{PGL}$ denotes the projective linear group) \textit{finite} if there is a standard real projective transformation $B \in  \operatorname{PGL}_n(\R)$ and a $\lambda \in \Rs$ such that $B - \lambda A$ is infinitesimal \cite{leitner2015limits}. This is equivalent to our definition of non-singular matrices: if a projective transformation is finite, it holds true that:
    \[
        B \simeq \lambda A \Rightarrow \det(B) \simeq \det(\lambda A)
    \]
    by the continuity of the determinant. We know that $\det(B) \in \R \Smz$, so it holds true that $\det(\lambda A) \in \A$. And by the same arguments as in \autoref{nsa_pg:mag_appr} we know that $\lambda^{-1} \in \magni(\norm{A})$ and therefore $\lambda A$ is non-singular 
    
    On the other hand, for a non-singular projective transformation $M$ we know that the determinant of an appreciable representative $M_\A$ is appreciable. Then it holds true that the shadow of $M_\A$ is a standard projective transformation. So the objects $B:=\sh(M_\A), \lambda = \frac{1}{\norm{M}}$, the matrix $B - \lambda A$ is infinitesimal, which is Leitners definition. 
\end{remark}
\begin{theorem}[Appreciable Image]
    \label{nsa_pg:appr_image}
    Let $M \in \mathrm{GL}_3(\Ks)$, let $M$ be regular and $p \in \Ks^3 \Smz$. Then the matrix vector multiplication of $M_\A = \frac{1}{\norm{M}} M$ and $p_\A = \frac{1}{\norm{p}} p$ is an appreciable vector, \ie $\norm{M_\A p_\A} \in \A$.
\end{theorem}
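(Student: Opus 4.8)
The plan is to establish appreciability of $\norm{M_\A p_\A}$ by proving its two defining halves separately: that it is limited (an upper bound) and that it is not infinitesimal (a lower bound). Together these place $\norm{M_\A p_\A}$ in $\A$. Throughout I work with the default Frobenius norm and freely transfer the standard matrix identities from $\C$ to $\Cs$ via universal transfer (\autoref{nsa_basics:transfer}): submultiplicativity, compatibility of the Frobenius norm with the Euclidean vector norm, and Cramer's rule for the inverse.

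The upper bound is immediate. By the definitions in the statement, $M_\A = \frac{1}{\norm{M}}M$ and $p_\A = \frac{1}{\norm{p}}p$ are normalised, so $\norm{M_\A} = 1$ and $\norm{p_\A} = 1$. Compatibility of the matrix norm with the vector norm then gives $\norm{M_\A p_\A} \leq \norm{M_\A}\,\norm{p_\A} = 1$, so the product is limited; this is the content underlying \autoref{nsa_pg:appr_mv}.

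The lower bound is where the regularity hypothesis enters and is the \textbf{main obstacle}. Since $M \in \mathrm{GL}_3(\Ks)$, the scalar multiple $M_\A$ is invertible, and writing $p_\A = M_\A^{-1}(M_\A p_\A)$ and applying compatibility again yields $1 = \norm{p_\A} \leq \norm{M_\A^{-1}}\,\norm{M_\A p_\A}$, hence $\norm{M_\A p_\A} \geq \norm{M_\A^{-1}}^{-1}$. It therefore suffices to show that $\norm{M_\A^{-1}}$ is limited. Here I use Cramer's rule $M_\A^{-1} = \frac{1}{\det(M_\A)}\operatorname{adj}(M_\A)$: the entries of $\operatorname{adj}(M_\A)$ are signed $2\times 2$ minors, i.e.\ sums of products of entries of $M_\A$, and since $M_\A$ is an appreciable matrix (\autoref{nsa_pg:appr_matrix}) all of its entries are limited, so by \autoref{nsa_basics:arith} every entry of $\operatorname{adj}(M_\A)$, and hence $\norm{\operatorname{adj}(M_\A)}$, is limited. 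Regularity of $M$ means precisely that $\det(M_\A)$ is appreciable, so $\frac{1}{\det(M_\A)}$ is appreciable and the product $\norm{M_\A^{-1}} = \abs{\det(M_\A)}^{-1}\norm{\operatorname{adj}(M_\A)}$ is limited.

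Finally I combine the bounds. From $I = M_\A^{-1}M_\A$ and submultiplicativity one gets $\norm{M_\A^{-1}} \geq \norm{I} > 0$, so $\norm{M_\A^{-1}}$ is limited and non-infinitesimal, hence appreciable; its reciprocal is therefore appreciable as well. Thus $\norm{M_\A p_\A} \geq \norm{M_\A^{-1}}^{-1}$ is bounded below by a non-infinitesimal quantity, and together with $\norm{M_\A p_\A} \leq 1$ this shows the product is limited and not infinitesimal, i.e.\ appreciable. The single place that genuinely needs the hypothesis is the limitedness of $\norm{M_\A^{-1}}$: were $M$ only almost singular, $\det(M_\A)$ would be infinitesimal, $\norm{M_\A^{-1}}$ unlimited, and the image could collapse to an infinitesimal vector.
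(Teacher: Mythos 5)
Your proof is correct, but your treatment of the crucial lower bound takes a genuinely different route from the paper. The paper argues by contradiction through the shadow map: if $M_\A p_\A$ were infinitesimal, then by \autoref{nsa_basics:sh_prop} one would have $0 = \sh(M_\A p_\A) = \sh(M_\A)\sh(p_\A)$ with $\sh(p_\A) \neq 0$, so the standard matrix $\sh(M_\A)$ would be singular, forcing $\det(M_\A)$ to be infinitesimal or zero and contradicting regularity. You instead derive the quantitative bound $\norm{M_\A p_\A} \geq \norm{M_\A^{-1}}^{-1}$ and control $\norm{M_\A^{-1}}$ via the adjugate and Cramer's rule --- which is essentially an inlined, self-contained re-derivation of the paper's later Lemma~\ref{nsa_pa:norm_inverse} (Norm of the Inverse), itself resting on Lemma~\ref{nsa_pg:lem:adju_appr}; there is no circularity, since that lemma does not depend on the present theorem. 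Your approach buys an explicit lower bound and a reusable intermediate fact (appreciability of $\norm{M_\A^{-1}}$), at the cost of invoking the inverse and the adjugate; the paper's approach is shorter and leans on the quotient structure $\Lim/\I \cong \C$ to reduce everything to the standard fact that a singular complex matrix has nontrivial kernel. Your upper bound via $\norm{M_\A p_\A} \leq \norm{M_\A}\norm{p_\A} = 1$ also differs cosmetically from the paper's entrywise argument (limited entries, sums and products of limited numbers are limited), but both are immediate. One stylistic remark: the final observation $\norm{M_\A^{-1}} \geq \norm{\Id} > 0$ is not actually needed --- limitedness of $\norm{M_\A^{-1}}$ alone already guarantees that its reciprocal is non-infinitesimal by \autoref{nsa_basics:arith}.
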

\begin{proof}
    We know that all entries of both $M_\A$ and $p_\A$ are limited. The matrix-vector multiplication consists of multiplication and summation, which preserve the property of being limited (see \autoref{nsa_basics:arith}). So the norm of $M_\A p_\A$ cannot be unlimited. 

    $M_\A p_\A$ cannot be an infinitesimal vector either. Assume the contrary, then we can use \autoref{nsa_basics:sh_prop} and find
    \[
        0 = \sh(M_\A p_\A) = \sh(M_\A) \sh(p_\A).
    \]

    Since $p_\A$ has appreciable length, it holds true that $\sh(p_\A) \in \K^3 \Smz$. But this means that $\sh(p_\A)$ is an element of the kernel of $\sh(M_\A)$ and so $\sh(M_\A)$ is singular. But this implies that either $M$ is singular or almost singular, which is a contradiction.
\end{proof}
\begin{definition}[$\epsilon$-Kernel]
    Let $M \in \Ks^{3 \times 3}$. We define the \textit{$\mathit{\epsilon}$-kernel} by 
    \[
        \epsilon(M):= \{ v \in \CsP^2 \bbar M_\A v_\A \simeq 0\}
    \]
where $M_\A$ and $p_\A$ are appreciable representatives of $M$ and $p$.
\end{definition}
\begin{remark}
    By \autoref{nsa_pg:appr_image} we know that for regular matrices the $\epsilon$-kernel consists only of vectors which are almost equivalent to the zero vector, which are not part of a projective space.
\end{remark}
We define the adjugate of a matrix in the usual way:
\begin{definition}[Adjugate non--standard matrix]
    For an appreciable matrix $M_\A \in \Cs^{3 \times 3}$ with 
    \[
        M_\A = \begin{pmatrix}a & b & c\\ d & e & f \\ g & h & i\end{pmatrix}
    \]
    we define the \textit{(non-standard) adjugate matrix $\mathit{M_\A^\triangle}$} by
    \[
        \mathit{M_\A^\triangle}:=
\begin{pmatrix}
\quad\det\begin{pmatrix}e & f\\ h & i\end{pmatrix} &
- \det\begin{pmatrix}d & f\\ g & i\end{pmatrix} &
\quad\det\begin{pmatrix}d & e\\ g & h\end{pmatrix} \\
- \det\begin{pmatrix}b & c\\ h & i\end{pmatrix} &
\quad\det\begin{pmatrix}a & c\\ g & i\end{pmatrix} &
- \det\begin{pmatrix}a & b\\ g & h\end{pmatrix} \\
\quad\det\begin{pmatrix}b & c\\ e & f\end{pmatrix} &
- \det\begin{pmatrix}a & c\\ d & f\end{pmatrix} &
\quad\det\begin{pmatrix}a & b\\ d & e\end{pmatrix}
\end{pmatrix}^T
    \]
    where the determinant $\det$ is defined in the usual way, just with non--standard arithmetic.
\end{definition}
\begin{lemma}[Adjugate Non-standard Matrix is Limited]
    \label{nsa_pg:lem:adju_appr}
Let $M_\A$ be an appreciable matrix. The entries of the adjugate matrix 
$M_\A^\triangle$ are limited.
\end{lemma}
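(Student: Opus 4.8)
The plan is to observe that every entry of $M_\A^\triangle$ is, up to sign, a $2\times 2$ minor of $M_\A$, that is, an expression of the form $ps - qr$ where $p,q,r,s$ are entries of $M_\A$, and then to invoke closure of the limited numbers under the ring operations. First I would record the purely structural fact that each of the nine entries of the adjugate is a cofactor $\pm\det\begin{pmatrix} p & q\\ r & s\end{pmatrix} = \pm(ps - qr)$ with $p,q,r,s$ drawn from the entries of $M_\A$, so that it suffices to show that any such $2\times 2$ determinant built from entries of $M_\A$ is limited.

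Next, by the appreciable matrix property established just above (all entries of an appreciable matrix are limited and at least one is appreciable), every $p,q,r,s$ lies in $\Lim$. It then remains to check that $\Lim$ is closed under the three operations occurring in $ps - qr$: multiplication, additive inverse, and addition.

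This is the only point requiring a moment's care, since \autoref{nsa_basics:arith} is phrased for the finer classes (infinitesimal, appreciable, unlimited) rather than directly for a "limited" number, which is by definition appreciable or infinitesimal. A short case distinction settles it while appealing only to the cases actually listed in \autoref{nsa_basics:arith}: a product of two limited numbers is appreciable when both factors are appreciable and infinitesimal as soon as one factor is infinitesimal, hence limited in every case; the additive inverse of an appreciable (resp.\ infinitesimal) number is appreciable (resp.\ infinitesimal), hence limited; and a sum of two limited numbers is limited, since $b+c$ is limited, $b+\epsilon$ is appreciable, and $\epsilon+\delta$ is infinitesimal, so no combination yields an unlimited number.

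Combining these observations, each cofactor $ps - qr$ is a difference of products of limited numbers and is therefore itself limited, so every entry of $M_\A^\triangle$ is limited, which is the claim. I do not expect any genuine obstacle here; the entire content is the closure of $\Lim$ under the ring operations, and the only thing to be careful about is to organize the case analysis so that it invokes exclusively the cases tabulated in \autoref{nsa_basics:arith}.
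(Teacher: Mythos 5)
Your proposal is correct and follows essentially the same route as the paper: each adjugate entry is a $2\times 2$ cofactor built from entries of $M_\A$, and one concludes by closure of the limited numbers under multiplication, negation, and addition via \autoref{nsa_basics:arith}. If anything, your version is slightly more careful than the paper's, which speaks of ``multiplication and subtraction of appreciable numbers'' even though the entries of $M_\A$ are only guaranteed to be limited; your explicit case analysis over the decomposition $\Lim = \I \cup \A$ closes that small imprecision.
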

\begin{proof}
  A.\ Leitner gave a similar proof in \cite{leitner2015limits} for the real case. The entries of $M_\A^\triangle$ are generated by multiplication and substraction of appreciable numbers. This yields limited (if the sum is infinitesimal) or appreciable results by \autoref{nsa_basics:arith}.
\end{proof}
\begin{lemma}[Norm of the Inverse]
    \label{nsa_pa:norm_inverse}
    Let $M \in \mathrm{GL}_3(\Ks)$ and let $M$ be regular. Then it holds true that $\norm{M_\A^{-1}}$ is appreciable. 
\end{lemma}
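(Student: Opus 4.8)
The plan is to use the classical adjugate formula for the inverse, transferred to the non-standard setting, and then to pin down $\norm{M_\A^{-1}}$ from two sides: first showing it is limited (an upper-size bound), and then showing it is bounded below away from $0$, so that it must in fact be appreciable.

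First I would record that, since $M$ is regular, the determinant $\det(M_\A)$ is appreciable, in particular nonzero, so $M_\A$ is invertible and universal transfer (\autoref{nsa_basics:transfer}) of Cramer's rule yields
\[
    M_\A^{-1} = \frac{1}{\det(M_\A)} M_\A^\triangle.
\]
For the upper bound I would then argue that $\norm{M_\A^{-1}}$ is limited: by \autoref{nsa_pg:lem:adju_appr} every entry of the adjugate $M_\A^\triangle$ is limited, and $\tfrac{1}{\det(M_\A)}$ is appreciable as the reciprocal of an appreciable number (\autoref{nsa_basics:arith}). Hence each entry of $M_\A^{-1}$ is a product of a limited and an appreciable number, therefore limited, and so $\norm{M_\A^{-1}}$ is limited.

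For the lower bound I would exploit the submultiplicativity of the chosen matrix norm together with the identity $M_\A M_\A^{-1} = I$. Submultiplicativity gives
\[
    \norm{I} = \norm{M_\A \, M_\A^{-1}} \le \norm{M_\A}\, \norm{M_\A^{-1}},
\]
so that $\norm{M_\A^{-1}} \ge \norm{I} / \norm{M_\A}$. Since $M_\A$ is an appreciable matrix, $\norm{M_\A} \in \A$, while $\norm{I}$ is a fixed positive real number (indeed $\norm{I} \ge 1$ for any submultiplicative norm, because $\norm{I} = \norm{I \cdot I} \le \norm{I}^2$ with $\norm{I} > 0$). Thus $\norm{I}/\norm{M_\A}$ is appreciable, and $\norm{M_\A^{-1}}$ is bounded below by an appreciable quantity, so it cannot be infinitesimal.

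Combining the two bounds, $\norm{M_\A^{-1}}$ is limited and not infinitesimal, hence appreciable, which is the claim. I expect the main subtlety to be precisely the lower bound: the adjugate argument alone only controls the entries from above, so the non-vanishing of $\norm{M_\A^{-1}}$ genuinely requires both the submultiplicativity of the norm and the appreciability of $\norm{I}$. One must therefore make sure the chosen norm really is submultiplicative and self-adjoint, which is exactly the standing assumption imposed on the matrix norm in \autoref{nsa_pg:appr_matrix}.
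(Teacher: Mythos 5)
Your proposal is correct and follows essentially the same two-part strategy as the paper: the lower bound via submultiplicativity applied to $M_\A M_\A^{-1} = \Id$, and the upper bound via the adjugate formula $M_\A^{-1} = \det(M_\A)^{-1} M_\A^\triangle$ combined with \autoref{nsa_pg:lem:adju_appr}. The only (welcome) refinement is your care with $\norm{\Id}$, which the paper simply takes to be $1$ even though for the Frobenius norm it equals $\sqrt{3}$ -- your observation that $\norm{\Id}$ is an appreciable fixed real is all that is actually needed.
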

\begin{proof}
   It is obvious that $\norm{M_\A}$ has appreciable norm just by definition. Since $\norm{\cdot}$ is submultiplicative we know that there is a $c \in \R \Smz$ such that \[
        1 = \norm{\Id} = \norm{M_\A (M_\A)^{-1}} \leq \norm{M_\A} \norm{ (M_\A)^{-1}}.
    \]
    Then we know that $\frac{1}{\norm{M_\A}} \leq  \norm{ (M_\A)^{-1}}$ and since $\norm{M_\A}$ is appreciable its inverse is also appreciable and so $ \norm{ (M_\A)^{-1}}$ cannot be infinitesimal.

    We still have to show that the norm of $(M_\A)^{-1}$ is not unlimited: 
    \begin{align*}
        (M_\A)^{-1} &= \frac{1}{\det(M_\A)} (M_\A)^\triangle \\
        \Rightarrow  \norm{(M_\A)^{-1}} &= \frac{1}{\abs{\det(M_\A)}} \norm{ (M_\A)^\triangle} 
    \end{align*}
    By \autoref{nsa_pg:lem:adju_appr} we know that all entries of the adjugate matrix $M_\A^\triangle$ are appreciable and furthermore ${\abs{\det(M_\A)}}^{-1}$ is appreciable since $M$ is non-singular. The product of an appreciable and a limited number is limited.

Combining both arguments the norm of the inverse has to be appreciable.
\end{proof}
Completely analogously to projective geometry over $\R$ or $\C$ we define a projective transformation.
\begin{definition}[Appreciable Projective Transformation]
    Let $M \in \mathrm{GL}_3(\Ks)$. For $p \in \PCs$ and $l \in \LCs$ we define the \textit{appreciable projective transformation by $\mathit{M}$} as follows: 
    \[
      p \mapsto  M_\A p_\A \quad \text{and} \quad l \mapsto (M_\A)^{-H} l_\A
    \]
    with $M^{-H}:= (\overline{M^{-1}})^T$. We call $M_\A$ an \textit{appreciable transformation matrix}. If $M$ is non-singular, we call the projective transformation \textit{non-singular} as well.
\end{definition}
\begin{theorem}[Non-singular Projective Transformations and Almost Equivalent]
   Let $M$ be a non-singular projective transformation and let $p, p' \in \PCs$ be almost equivalent. Then the images of $p$ and $p'$ under the regular projective transformation $M$ are also almost equivalent. 
\end{theorem}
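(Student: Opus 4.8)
The plan is to transport the scaling relation $p_\A \simeq \lambda p'_\A$ through the matrix $M_\A$ and to control the resulting error term using the submultiplicative matrix norm. Concretely, I would first fix appreciable representatives $p_\A, p'_\A$ of $p$ and $p'$. Since $p$ and $p'$ are almost equivalent, by the definition of almost equivalence together with the Appreciable Scaling Factor lemma there is a $\lambda \in \A$ with $p_\A \simeq \lambda p'_\A$; writing $\epsilon := p_\A - \lambda p'_\A$, this says precisely that $\epsilon$ is an infinitesimal vector.

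Next I would apply $M_\A$ and exploit its linearity: $M_\A p_\A - \lambda M_\A p'_\A = M_\A(p_\A - \lambda p'_\A) = M_\A \epsilon$. The key estimate is the submultiplicative inequality $\norm{M_\A \epsilon} \le \norm{M_\A}\,\norm{\epsilon}$. Since $M_\A$ is an appreciable matrix, $\norm{M_\A}$ is appreciable, hence limited, while $\norm{\epsilon}$ is infinitesimal; by \autoref{nsa_basics:arith} the product $\norm{M_\A}\,\norm{\epsilon}$ is infinitesimal, so by \autoref{nsa_pg:rep_norms} the vector $M_\A \epsilon$ is itself infinitesimal. This yields $M_\A p_\A \simeq \lambda M_\A p'_\A$, exactly the sought scaling relation for the images, carried by the same factor $\lambda$.

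It then remains to check that $M_\A p_\A$ and $M_\A p'_\A$ really are appreciable representatives of the image points, so that the displayed relation genuinely expresses almost equivalence of the images in the sense of the definition. This is where regularity of $M$ enters: by \autoref{nsa_pg:appr_image} the products $M_\A p_\A$ and $M_\A p'_\A$ are appreciable vectors, hence valid appreciable representatives of $M(p)$ and $M(p')$. Because $\lambda \in \A \subset \Cs \setminus \I$, the relation $M_\A p_\A \simeq \lambda M_\A p'_\A$ is precisely the statement that the images are almost equivalent.

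The main obstacle is not the algebra but the bookkeeping of representatives: one must ensure that the vectors being compared are appreciable representatives of the image classes, since otherwise the relation $\simeq$ need not certify almost equivalence, and one must keep the scaling factor away from $\I$. Both points are supplied by the appreciable image theorem and the Appreciable Scaling Factor lemma respectively, so the only genuinely quantitative input is the norm estimate in the second step, which rests on having chosen a submultiplicative norm in the definition of the appreciable matrix.
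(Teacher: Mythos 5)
Your proof is correct, but it takes a genuinely different route from the paper's. The paper argues through the cross-product characterization of almost equivalence: it expresses the normalized cross product of the two image vectors via the identity $M_\A p_\A \times M_\A p'_\A = \det(M_\A)\, M_\A^{-T}(p_\A \times p'_\A)$, then bounds its norm by an appreciable factor (assembled from $\abs{\det(M_\A)}$, $\norm{M_\A^{-1}}$ via \autoref{nsa_pa:norm_inverse}, and the appreciable norms $\norm{M_\A p_\A}$, $\norm{M_\A p'_\A}$ from \autoref{nsa_pg:appr_image}) times the infinitesimal $\norm{p_\A \times p'_\A}$, invoking the Cross-Product and Almost Equivalent lemma on both ends. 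You instead work straight from the definition: you push the infinitesimal defect $\epsilon = p_\A - \lambda p'_\A$ through $M_\A$ by linearity and kill it with the single estimate $\norm{M_\A \epsilon} \le \norm{M_\A}\,\norm{\epsilon}$. Your version is more elementary --- it avoids the cross-product identity and the Norm of the Inverse lemma entirely --- and it isolates the role of non-singularity to exactly one place, namely guaranteeing via \autoref{nsa_pg:appr_image} that $M_\A p_\A$ and $M_\A p'_\A$ are appreciable vectors, hence valid appreciable representatives of the image classes, so that $M_\A p_\A \simeq \lambda M_\A p'_\A$ with $\lambda \in \A$ genuinely certifies almost equivalence; this is precisely the step that breaks down for almost singular $M$, as the paper's subsequent counterexample shows. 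What the paper's computation buys in exchange is an explicit quantitative link between the cross product of the preimages and that of the images, in keeping with the bracket-style arguments used elsewhere in the text; as a proof of this theorem, however, your argument is complete and, if anything, cleaner.
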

\begin{proof}
   We have to show that 
   \[
       \frac{1}{\norm{M_\A p_\A}} M_\A p_\A \times \frac{1}{\norm{M_\A p'_\A}} M_\A p'_\A \in \I^3.
   \]
   Equivalently we show that the norm of the cross-product above is infinitesimal:
   \begin{align*}
       \frac{1}{\norm{M_\A p_\A}} M_\A p_\A \times \frac{1}{\norm{M_\A p'_\A}} M_\A p'_\A &= \norm{\frac{{\det(M_\A)} }{ \norm{M_\A p_\A}\norm{M_\A p'_\A} } {M_\A^{-T}} \left( p_\A \times p'_\A \right) } \\  &\leq
   \frac{\abs{\det(M_\A)}}{ \norm{M_\A p_\A}\norm{M_\A p'_\A} }\norm{M_\A^{-T}} \norm{ p_\A \times p'_\A } \\
   &=  \frac{\abs{\det(M_\A)}}{ \norm{M_\A p_\A}\norm{M_\A p'_\A} }\norm{M_\A^{-1}} \norm{ p_\A \times p'_\A }
   \end{align*}
   Where we used that we chose $\norm{\cdot}$ submultiplicative in the inequality and that 
   transposition do not change the norm of a matrix for self-adjoint matrix norms. By assumption $M$ is regular and therefore the determinant of $M_\A$ is appreciable (and so is its absolute value). By \autoref{nsa_pg:appr_image}, we know that the terms $\norm{M_\A p_\A}, \norm{M_\A p'_\A}$ are appreciable and so are their inverse fractions. Then by \autoref{nsa_pa:norm_inverse}, we know that the norm of the inverse of $M_\A$ is also appreciable. So the first two terms yield an appreciable result. Finally, by assumption we know that $\norm{ p_\A \times p'_\A }$ is infinitesimal, which multiplied with an appreciable number is infinitesimal, which proofs the claim.
\end{proof}
\begin{theorem}[Regular Projective Transformations and Almost Incident]
    \label{nsa_pg:reg_trafo_incident}
   Let $l \in \PCs$ and $p \in \LCs$ be almost incident and let $M$ be an non-singular projective transformation. Then the images of $l$ and $p$ under $M$ are also almost incident.
\end{theorem}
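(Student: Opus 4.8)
The plan is to reduce the claim to the algebraic fact that the line rule $l \mapsto (M_\A)^{-H} l_\A$ is precisely the one designed to preserve the scalar product against the point rule $p \mapsto M_\A p_\A$. Writing $\langle x, y \rangle = \overline{x}^T y$ for the complex scalar product, the images of the point $p$ and the line $l$ are almost incident exactly when
\[
    \langle M_\A p_\A,\ (M_\A)^{-H} l_\A \rangle_* \in \I.
\]
By \autoref{nsa_pg:i_well_def} the membership of an appreciable scalar product in $\I$ is independent of the chosen appreciable representatives, so I am free to evaluate $\langle \cdot, \cdot \rangle_*$ on any pair of appreciable representatives of the two image classes. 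The natural candidates are the transformed vectors $M_\A p_\A$ and $(M_\A)^{-H} l_\A$ themselves, \emph{provided} they are genuinely appreciable.

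First I would verify that $M_\A p_\A$ is an appreciable representative of the image point: this is exactly \autoref{nsa_pg:appr_image}, since $M$ is non-singular. For the image line I would argue that $(M_\A)^{-H}$ is itself a non-singular appreciable matrix: its norm equals $\norm{(M_\A)^{-1}}$ because the chosen matrix norm is self-adjoint, and this is appreciable by \autoref{nsa_pa:norm_inverse}; moreover $\det((M_\A)^{-H}) = \overline{\det(M_\A)^{-1}}$ is appreciable because $\det(M_\A)$ is. Applying the reasoning of \autoref{nsa_pg:appr_image} to the non-singular appreciable matrix $(M_\A)^{-H}$ and the appreciable vector $l_\A$ then shows $(M_\A)^{-H} l_\A$ is appreciable. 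Concretely, it is limited as a product of appreciable entries by \autoref{nsa_pg:appr_mv}, and it cannot be infinitesimal, since from $l_\A = M_\A^{H}\big((M_\A)^{-H} l_\A\big)$ an infinitesimal right-hand factor multiplied by the limited matrix $M_\A^{H}$ would force $l_\A \simeq 0$, contradicting appreciability of $l_\A$.

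With both representatives established, the core computation is immediate. Using $(AB)^H = B^H A^H$ together with the identity $(M_\A)^{-H} = (M_\A^H)^{-1}$, I obtain
\[
    \langle M_\A p_\A,\ (M_\A)^{-H} l_\A \rangle = \overline{p_\A}^T\, M_\A^H (M_\A^H)^{-1}\, l_\A = \overline{p_\A}^T l_\A = \langle p_\A, l_\A \rangle.
\]
Since this equals $\langle p, l \rangle_*$, which lies in $\I$ by the hypothesis that $p$ and $l$ are almost incident, the relation $\langle M_\A p_\A,\ (M_\A)^{-H} l_\A \rangle_* \in \I$ holds, and the images are almost incident as well.

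I expect the only genuine obstacle to be the appreciability of the transformed line $(M_\A)^{-H} l_\A$, which forces me to first certify that $(M_\A)^{-H}$ is a non-singular appreciable matrix via \autoref{nsa_pa:norm_inverse} and the self-adjointness of the norm. The cancellation $M_\A^H (M_\A^H)^{-1} = \Id$ that drives the whole argument is trivial once this setup is correctly in place, so all the real work is in justifying that the appreciable scalar product may legitimately be evaluated on the transformed vectors.
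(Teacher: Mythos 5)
Your proposal is correct and follows essentially the same route as the paper: the cancellation $\langle (M_\A)^{-H} l_\A, M_\A p_\A\rangle = \langle l_\A, p_\A\rangle$ combined with \autoref{nsa_pg:appr_image} and \autoref{nsa_pa:norm_inverse} to certify that the transformed vectors are appreciable representatives, so that membership in $\I$ transfers to the appreciable scalar product of the images. Your direct argument that $(M_\A)^{-H} l_\A$ cannot be infinitesimal (via $l_\A = M_\A^{H}\bigl((M_\A)^{-H} l_\A\bigr)$) is a slightly cleaner packaging of what the paper does by bounding $\norm{(M_\A)^{-H} l_\A}$, but it is the same argument in substance.
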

\begin{proof}
   Since $l$ and $p$ are almost incident there is an $\epsilon \in \I$ such that 
   \begin{align*}
       &\epsilon = \langle l, p \rangle_* = \langle l_\A, p_\A \rangle = \langle (M_\A)^{-H} l_\A, M_\A p_\A \rangle  \\
                \Leftrightarrow &\epsilon \cdot \norm{M_\A p_\A} \norm{(M_\A)^{-H} l_\A}  = \langle \frac{(M_\A)^{-H} l_\A}{\norm{(M_\A)^{-H}l_\A}}, \frac{M_\A p_\A}{\norm{M_\A p_\A}} \rangle
   \end{align*}
   By \autoref{nsa_pg:appr_image}, we know that $M_\A p_\A$ is appreciable and so is its norm. 
   
We chose $M$ to be a regular matrix and so the inverse of $M_\A$ exists and its matrix norm is appreciable by \autoref{nsa_pa:norm_inverse}. Analogously to the proof of \autoref{nsa_pg:appr_image} we can bound the term $\norm{(M_\A)^{-H} l_\A} \leq  \norm{(M_\A)^{-H}} \norm{ l_\A} =  \norm{(M_\A)^{-1}} \norm{ l_\A}$ which is a product of appreciable numbers and therefore appreciable. Then $\epsilon':= \epsilon \cdot  \norm{(M_\A)^{-H}}\norm{M_\A p_\A}$ is infinitesimal as product of an infinitesimal and two appreciable numbers. Then, we can conclude for arbitrary $c, c' \in \A$
\[
    \Leftrightarrow \epsilon'':= \epsilon \cdot c \cdot c'= \langle \frac{(M_\A)^{-H} l_\A}{c' \norm{(M_\A)^{-H} l_\A}}, \frac{M_\A p_\A}{c \norm{M_\A p_\A}} \rangle = \langle M^{-H} l, M p \rangle_*
\]
which shows that the appreciable projective transformation of $l$ and $p$ are almost incident, that was the claim.
\end{proof}
\begin{definition}[Almost Affine Projective Transformation]
    \label{nsa_pg:def:almost_affine}
    Let $M \in \mathbb{GL}_3(\Rs)$ be a projective transformation. We call $M$ \textit{almost affine}, if there is an appreciable representative $M_\A$ of $M$ which can be written as 
   \[
M_\A =       \begin{pmatrix}
           c & s & a \\ -s & c & b \\ \epsilon & \delta &1
       \end{pmatrix}
   \]
 with $\epsilon, \delta \in \I$ and $c^2 + s^2 \not \in \I$.
\end{definition}
\begin{lemma}[Almost Collinear and Regular Projective Transformations]
   Let $p,q,r \in \PCs$ be three distinct points almost conlinear points in $\RsP^2$ and let $M$ be a regular non-standard projective transformation. Then it holds true that the transformed points $Mp, Mq, Mr$ are also almost collinear.
\end{lemma}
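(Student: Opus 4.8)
The plan is to reduce the statement to the already-established preservation of almost incidence under non-singular projective transformations (\autoref{nsa_pg:reg_trafo_incident}). By \autoref{nsa_pg:almost_collinear}, saying that $p,q,r$ are almost collinear is precisely saying that the point $p$ is almost incident to the line $l := \join_*(q,r)$. Under the transformation $M$ the point $p$ is sent to $M_\A p_\A$ and the line $l$ is sent to $(M_\A)^{-H} l_\A$; since we work in $\RsP^2$ we have $(M_\A)^{-H} = (M_\A)^{-T}$. \autoref{nsa_pg:reg_trafo_incident} then guarantees that the images of $p$ and $l$ remain almost incident, so the whole argument hinges on identifying the transformed line with the join of the transformed points $Mq$ and $Mr$.

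The key step is the classical cofactor identity, transferred to the non-standard setting: for every matrix one has $(M_\A q_\A)\times (M_\A r_\A) = \det(M_\A)\,(M_\A)^{-T}(q_\A \times r_\A)$, which holds for all complex matrices and vectors and hence, by universal transfer (\autoref{nsa_basics:transfer}), for the hypercomplex objects here. First I would verify, via \autoref{nsa_pg:appr_image}, that $M_\A q_\A$ and $M_\A r_\A$ are appreciable, so that they are legitimate appreciable representatives of $Mq$ and $Mr$ and the left-hand side is exactly $\join_*(Mq,Mr)$. The right-hand side is $\det(M_\A)$ times the transformed line $(M_\A)^{-T} l_\A$, where $l_\A = q_\A \times r_\A = \join_*(q,r)$. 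Because $M$ is regular, $\det(M_\A)$ is appreciable, so $\join_*(Mq,Mr)$ and the transformed line differ only by an appreciable scalar and therefore represent the same element of $\RsP^2$.

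Having established that the transformed line and $\join_*(Mq,Mr)$ coincide projectively up to an appreciable factor, I would invoke the well-definedness of almost incidence (\autoref{nsa_pg:well_def}) to conclude that almost incidence of the image of $p$ with the image of $l$ is the same as almost incidence of $Mp$ with $\join_*(Mq,Mr)$. By \autoref{nsa_pg:almost_collinear} this is exactly the assertion that $Mp, Mq, Mr$ are almost collinear, which is the claim.

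The main obstacle I anticipate is the careful bookkeeping of representatives and scalars: one must make sure that the appreciable representatives chosen for $Mq$ and $Mr$ really are $M_\A q_\A$ and $M_\A r_\A$ (legitimised by \autoref{nsa_pg:appr_image}), and that the scalar $\det(M_\A)$ produced by the cofactor identity lies in $\A$ rather than $\I$. This is where the regularity hypothesis on $M$ is essential: for an almost singular $M$ the factor $\det(M_\A)$ would collapse into $\I$ and the two lines could fail to be almost equivalent, so the reduction to \autoref{nsa_pg:reg_trafo_incident} would break down. Everything else is routine bilinearity of the cross product together with the arithmetic of \autoref{nsa_basics:arith}.
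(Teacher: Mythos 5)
Your proposal is correct and takes essentially the same route as the paper, whose entire proof is to form the join of two of the points and apply \autoref{nsa_pg:reg_trafo_incident} to that line and the third point. The extra work you do --- checking via \autoref{nsa_pg:appr_image} and the cofactor identity $(M_\A q_\A)\times(M_\A r_\A)=\det(M_\A)(M_\A)^{-T}(q_\A\times r_\A)$ that the transformed line agrees projectively (up to the appreciable factor $\det(M_\A)$) with $\join_*(Mq,Mr)$ --- is a step the paper leaves implicit, so your version is simply a more complete account of the same argument.
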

\begin{proof}
    Consider the line $l:= \join(p,q)$ and apply \autoref{nsa_pg:reg_trafo_incident} to $l$ and $r$.
\end{proof}
\begin{lemma}[Almost Singular Transformations and Almost Relations]
    There are almost singular projective transformations which do \textit{not} preserve the property of being almost equivalent and almost collinear.
\end{lemma}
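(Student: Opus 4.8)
The plan is to prove the lemma by exhibiting an explicit counterexample — a single almost singular projective transformation together with points (or lines) on which it fails to preserve one of the almost relations. Since the statement is existential ("There are almost singular...which do not preserve..."), one well-chosen family of examples suffices. An almost singular matrix $M$ is one whose appreciable representative $M_\A$ satisfies $\det(M_\A) \in \I \setminus \{0\}$, so the natural candidate is a matrix that degenerates to a rank-deficient standard matrix in the shadow. The intuition is that $\sh(M_\A)$ collapses a whole line (or at least a two-dimensional subspace) to a single point, and almost equivalence / almost collinearity should be exactly the properties destroyed by such a collapse.

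First I would write down a concrete $M_\A$, for instance
\[
M_\A = \begin{pmatrix} 1 & 0 & 0 \\ 0 & 1 & 0 \\ 0 & 0 & \epsilon \end{pmatrix}, \qquad \epsilon \in \I \setminus \{0\},
\]
whose determinant is $\epsilon \in \I \setminus \{0\}$, so $M$ is almost singular by definition. Next I would pick two points that are \emph{not} almost equivalent but whose images are. The idea is to use the third coordinate, which $M_\A$ crushes by a factor of $\epsilon$: take $p = (1,0,1)^T$ and $p' = (1,0,0)^T$, which are appreciable and not almost equivalent (their shadows $(1,0,1)^T$ and $(1,0,0)^T$ are genuinely distinct in $\CP^2$, and their cross-product is appreciable, so by \autoref{nsa_pg:aeq_angles} the angle between them is not infinitesimal). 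Their images are $M_\A p = (1,0,\epsilon)^T$ and $M_\A p' = (1,0,0)^T$. These two image vectors \emph{are} almost equivalent, since $M_\A p - M_\A p' = (0,0,\epsilon)^T \simeq 0$, so $M_\A p \simeq M_\A p'$; equivalently their cross-product $(0,0,\epsilon)^T \times$-type computation is infinitesimal. This already proves the failure of preservation of almost equivalence.

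For the almost-collinearity half of the statement I would choose three points that are \emph{not} almost collinear but whose images are. A convenient choice is $x = (1,0,1)^T$, $y = (0,1,1)^T$, $z = (0,0,1)^T$, whose appreciable determinant $\det_*[x,y,z]$ is an appreciable number (indeed $\pm 1$), so by \autoref{nsa_pg:appr_det_ac} they are not almost collinear (and they are pairwise not almost equivalent). Applying $M_\A$ scales only the last row, giving images whose determinant is $\epsilon \cdot \det_*[x,y,z] \in \I$, so the images \emph{are} almost collinear. I would verify that the three image points remain pairwise not almost equivalent so that the criterion of \autoref{nsa_pg:appr_det_ac} genuinely applies, or alternatively fall back on the definition via \autoref{nsa_pg:almost_collinear} directly with the normalized determinant to sidestep that hypothesis.

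\textbf{The main obstacle} I anticipate is bookkeeping around representatives and normalization rather than any conceptual difficulty: I must make sure the chosen $p, p', x, y, z$ are already appreciable (so that $p_\A = p$ etc.\ and no hidden rescaling changes the magnitude analysis), and I must confirm that $M_\A$ as written really is an appreciable matrix representative, i.e.\ $\|M_\A\|_F$ is appreciable and $1 \in \magni(\|M\|)$ for the natural scaling — which holds because the top-left block contributes an appreciable Frobenius norm. The only subtle point is guaranteeing that the failure is genuine and not an artifact of a bad representative choice: because almost equivalence and almost collinearity are well-defined (by the lemmas cited above, ultimately \autoref{nsa_pg:i_well_def}), once I exhibit the phenomenon for \emph{one} appreciable representative it holds for all, so the counterexample is robust. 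I would close by remarking that this is exactly the expected behavior: an almost singular transformation has a shadow that is a degenerate (rank-deficient) standard projective map, and such maps are precisely the ones that fail to preserve incidence-type relations in $\CP^2$.
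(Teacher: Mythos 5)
Your choice of matrix is exactly the paper's, but your choice of test points sends the argument in the wrong direction, and one of your two computations is actually incorrect. Read in context (this lemma is the negative counterpart of the two preceding positive results, which say that \emph{non-singular} transformations map almost equivalent points to almost equivalent points and almost collinear points to almost collinear points), ``does not preserve'' means: there exist points that \emph{are} almost equivalent (resp.\ almost collinear) whose images are \emph{not}. You instead exhibit points that are not almost equivalent whose images are, and points that are not almost collinear whose images you claim are. The first is the converse implication and does not contradict preservation in the intended sense. The second is simply false: the image of $z=(0,0,1)^T$ under $M_\A$ is $(0,0,\epsilon)^T$, an \emph{infinitesimal} vector, so before you may apply the appreciable determinant or the incidence definition you must rescale it to its appreciable representative $(0,0,1)^T$. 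Doing so cancels the factor $\epsilon$ you are counting on: $\det[(1,0,\epsilon)^T,(0,1,\epsilon)^T,(0,0,1)^T]=1\notin\I$, and indeed $\join_*\bigl((0,1,\epsilon)^T,(0,0,1)^T\bigr)=(1,0,0)^T$ pairs with $(1,0,\epsilon)^T$ to give $1\notin\I$. The images are not almost collinear; $[z]$ is in fact a fixed point of $M$ in $\CsP^2$ since $Mz=\epsilon z$.

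The paper's proof uses the same $M$ but chooses $a=(0,0,1)^T$, $b=(\epsilon,0,1)^T$, $c=(0,\epsilon,1)^T$, which \emph{are} pairwise almost equivalent and almost collinear, and which lie in the $\epsilon$-kernel of $M$: their images $(0,0,\epsilon)^T$, $(\epsilon,0,\epsilon)^T$, $(0,\epsilon,\epsilon)^T$ are infinitesimal vectors, and the renormalization to $(0,0,1)^T$, $(1,0,1)^T$, $(0,1,1)^T$ magnifies their infinitesimal differences into appreciable ones, destroying both relations at once. That is the idea your proposal is missing: the failure of preservation is produced not by the linear map itself but by the normalization of infinitesimal images, so the witnessing points must be chosen inside the $\epsilon$-kernel rather than away from it.
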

\begin{proof}
    By example: let $\epsilon$ be a positive hyperreal and define the appreciable projective transformation (which is already an appreciable representative)
   \[
   M = \begin{pmatrix}
       1 & 0 & 0\\ 0& 1 & 0\\ 0& 0& \epsilon 
   \end{pmatrix}.
   \]
   Then $\det(M) = \epsilon \in \I$, so $M$ is almost singular. Pick the following points $a,b,c \in \PCs$:
   \[
       a = \begin{pmatrix}
           0 \\ 0 \\ 1 
       \end{pmatrix} \quad
       b = \begin{pmatrix}
           \epsilon \\ 0 \\ 1 
       \end{pmatrix} \quad
       c = \begin{pmatrix}
           0 \\ \epsilon \\ 1 
       \end{pmatrix} 
   \]
   It's easy to see that the points are almost equivalent. Additionally the connecting line $l$ of $a$ and $b$ is almost incident to $c$: 
   \[
       \join_*(a,b) = \frac{a \times b}{\norm{a\times b}} =  \begin{pmatrix}
           0 \\ 1 \\ 0 
       \end{pmatrix} \Rightarrow \langle c, l \rangle_* = 0 + \epsilon + 0 = \epsilon \in \I
   \]
   But the appreciable projective transformations of $a,b$ and $c$ are
   \begin{align*}
      M a =  \begin{pmatrix}
           0 \\ 0 \\ \epsilon
       \end{pmatrix} \sim \begin{pmatrix}
           0 \\ 0 \\ 1       \end{pmatrix}, \quad
      M b =  \begin{pmatrix}
           \epsilon \\ 0 \\ \epsilon
       \end{pmatrix} \sim \begin{pmatrix}
           1 \\ 0 \\ 1       \end{pmatrix}, \quad
M c =  \begin{pmatrix}
            0 \\ \epsilon \\ \epsilon 
       \end{pmatrix} \sim \begin{pmatrix}
           0 \\ 1 \\ 1       \end{pmatrix} 
   \end{align*}
   which are obviously neither almost collinear nor almost equivalent!
\end{proof}
\begin{remark}
    This is quite remarkable since linear functions as they are described by matrices are continuous everywhere and so one would expect the transformations to preserve ``almost''-relations. The crucial point here is not the transformation itself but the normalization afterwards: the function $\frac{ x}{ \norm{x}}$ is undefined for zero vector and is discontinuous for infinitesimal vectors and this is where the points are ``pushed apart''. 

    The points $a,b$ and $c$ in the previous statement were chosen such that they are in the $\epsilon$-kernel of $M$. Hence, $Ma$, $Mb$ and $Mc$ yield vectors with infinitesimal length that are after normalization not almost equivalent anymore.
\end{remark}
\section{Non-standard Cross-Ratios}
We will start with the basic definition of a cross-ratio in $\Ks^2$.
\begin{definition}[Cross-Ratio in $\Ks^2$]
    For $A,B,C,D \in \Ks^2$ we define the cross-ratio in the standard way (see \eg ``Geometriekalküle'' \cite{richter2009geometriekalkule} \page 42).
   \[
       (A,B;C,D) := \frac{[A,C] [B,D]}{[A,D] [B,C]}
   \]
\end{definition}
\begin{remark}
    It is a bit surprising that the cross-ratio is analogously defined as in the standard setting, one would assume that due the appearance of infinitesimal and unlimited values a normalization would be necessary (for example the employment of the appreciable or the normalized determinant). But the cross-ratio is completely independent of the representative of a vector as we will see in the next statement.
\end{remark}
\begin{lemma}[Cross-Ratios and Representatives]
   Let $\lambda_A, \lambda_B, \lambda_C, \lambda_D \in \Ks \Smz$. Then it holds true that
   \[
       (A,B; C, D) = (\lambda_A \cdot A, \lambda_B \cdot  B; \lambda_C \cdot  C, \lambda_D \cdot  D).
   \]
\end{lemma}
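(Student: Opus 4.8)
The plan is to exploit the homogeneity of the bracket $[\cdot,\cdot]$ in each of its two arguments and to observe that each scaling factor $\lambda_A,\lambda_B,\lambda_C,\lambda_D$ appears exactly once in the numerator and once in the denominator of the cross-ratio, so that all four factors cancel. No normalization is needed, which is precisely the point stressed in the preceding remark: since all $\lambda_i \in \Ks \Smz$ are invertible and $\Ks$ is a field, no infinitesimal or unlimited phenomena enter the computation.

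First I would record the relevant property of the bracket. Writing $[X,Y]$ for the determinant of the matrix with columns $X$ and $Y$, it satisfies $[\mu X, Y] = \mu\,[X,Y] = [X, \mu Y]$ for every $\mu \in \Ks$. This is a purely algebraic identity over any field; in the non-standard setting it holds by universal transfer (\autoref{nsa_basics:transfer}) applied to the corresponding statement over $\C$ (respectively $\R$), since $\Ks$ is a field.

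Next I would substitute the scaled vectors into each of the four brackets and pull out the factors, obtaining
\[
[\lambda_A A, \lambda_C C] = \lambda_A \lambda_C\,[A,C]
\]
and analogously $[\lambda_B B, \lambda_D D] = \lambda_B \lambda_D\,[B,D]$, $[\lambda_A A, \lambda_D D] = \lambda_A \lambda_D\,[A,D]$, and $[\lambda_B B, \lambda_C C] = \lambda_B \lambda_C\,[B,C]$. Assembling these into the cross-ratio, the numerator acquires the factor $\lambda_A \lambda_B \lambda_C \lambda_D$ and the denominator acquires exactly the same factor; since each $\lambda_i$ is nonzero and hence invertible in the field $\Ks$, this common factor cancels and the two cross-ratios coincide.

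I do not expect a genuine analytic obstacle here, as the result is a formal cancellation. The only point requiring care is the bookkeeping: one must verify that each of the four factors occurs with the same multiplicity (exactly once) above and below the fraction bar. This is guaranteed by the combinatorial pattern of the definition, namely the pairs $AC, BD$ in the numerator against $AD, BC$ in the denominator, in which every letter appears exactly twice overall — once on top and once on the bottom. It is worth remarking explicitly that, unlike the determinant-based incidence tests of the previous section, the cross-ratio needs neither the appreciable nor the normalized determinant, precisely because the scaling factors cancel identically.
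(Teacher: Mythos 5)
Your proposal is correct and is essentially the argument the paper has in mind: the paper simply defers to the standard proof in ``Geometriek\"ulk\"ule'', which is exactly this cancellation via the homogeneity of the $2\times 2$ bracket in each argument, valid over any field and hence over $\Ks$ by transfer. The bookkeeping you describe (each $\lambda_i$ appearing exactly once in numerator and denominator) is the whole content of that proof, so there is nothing to add.
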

\begin{proof}
    Completely analogously to ``Geometriekalküle'' \cite{richter2009geometriekalkule} \page 42.
\end{proof}
\begin{remark}
   Particularly interesting is the fact that we did only exclude $0$ from the possible scaling factors of $A,B,C,D$ and not all infinitesimal numbers. The reason for this is that the cross-ratio is completely independent of the representative even for non-Archimedean fields.
\end{remark}
\begin{lemma}[Projective Transformations and Cross-Ratios]
    Let $M \in \Ks^{3\times 3}$ be regular or almost singular then 
    \[
        (A,B; C,D) = (M \cdot A, M \cdot B; M \cdot C; M \cdot D).
    \]
\end{lemma}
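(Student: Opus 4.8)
The plan is to reduce everything to the multiplicativity of the determinant. Writing $[X,Y] := \det[X,Y]$ for the $2\times 2$ bracket appearing in the definition of the cross-ratio, the single fact I need is the identity
\[
[MX, MY] = \det(M)\,[X,Y],
\]
which is just the multiplicativity $\det(M)\det[X,Y]$ of the determinant applied to the $2\times 2$ matrix whose columns are $X$ and $Y$. This is a polynomial identity in the entries of $M,X,Y$ that holds over $\K$, hence by universal transfer (\autoref{nsa_basics:transfer}) it holds verbatim over $\Ks$; crucially, no appreciability assumption is needed for it, since it is a pure field identity.

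First I would substitute this identity into each of the four brackets of the cross-ratio. Each bracket acquires a single factor $\det(M)$, so
\[
(MA,MB;MC,MD) = \frac{[MA,MC]\,[MB,MD]}{[MA,MD]\,[MB,MC]} = \frac{\det(M)^2\,[A,C]\,[B,D]}{\det(M)^2\,[A,D]\,[B,C]}.
\]
Provided $\det(M)\neq 0$, the factors $\det(M)^2$ in numerator and denominator cancel and the right-hand side collapses to $(A,B;C,D)$, which is exactly the claim. Note that this cancellation is an identity in the field $\Ks$ and does not care whether $\det(M)$ is appreciable, unlimited, or infinitesimal --- only that it is nonzero. The same nonvanishing also guarantees that the transformed denominators $[MA,MD]=\det(M)[A,D]$ and $[MB,MC]=\det(M)[B,C]$ stay nonzero, so the cross-ratio remains well defined after transformation.

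Hence the entire content of the hypothesis ``$M$ regular or almost singular'' is to secure $\det(M)\neq 0$. I would check this by passing to an appreciable representative: with $M_\A = \tfrac 1\lambda M$ and $\lambda \in \magni(\norm{M})$ one has $\det(M) = \lambda^{k}\det(M_\A)$ (with $k$ the appropriate power), and $\lambda \neq 0$. If $M$ is regular then $\det(M_\A)\in\A$, and if $M$ is almost singular then $\det(M_\A)\in\I\Smz$; in either case $\det(M_\A)\neq 0$, so $\det(M)\neq 0$. The genuinely singular case is excluded precisely because there $\det(M_\A)=0$, which would both invalidate the cancellation and allow a nonzero bracket to collapse to zero.

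The main point to be careful about is the book-keeping of dimensions: the cross-ratio is built from $2\times 2$ brackets of the line-coordinates $A,B,C,D\in\Ks^2$, so $M$ must be understood as acting on those $2$-vectors (the map it induces on the line), and it is the determinant of that action whose nonvanishing the regular/almost-singular hypothesis supplies. Once this is pinned down, the computation is exactly the standard one of ``Geometriekalküle'' \cite{richter2009geometriekalkule} \page~42 transported to $\Ks$ by transfer, with the single extra observation that an infinitesimal-but-nonzero determinant does no harm because it cancels exactly.
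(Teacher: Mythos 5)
Your proof is correct and is essentially the argument the paper intends: the paper's proof simply defers to the standard determinant-multiplicativity cancellation in ``Geometriek\"alkule'', and your version makes explicit the one point that matters in the non-standard setting, namely that an infinitesimal but nonzero $\det(M)$ still cancels exactly, so no appreciability or normalization is needed. Your reading of the ``regular or almost singular'' hypothesis as precisely securing $\det(M) \neq 0$ (via $\det(M_\A) \in \A$ or $\det(M_\A) \in \I \setminus \{0\}$) matches the paper's definitions.
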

\begin{proof}
Similar to ``Geometriekalküle'' \cite{richter2009geometriekalkule} \page 42.
\end{proof}
\begin{remark}
   Again it is at least a bit astonishing that even if a projective transformation is almost singular it still preserves cross-ratios. 
\end{remark}
\begin{theorem}[Almost Equivalent Cross-Ratios]
   Let $A, B, C, D$ and $A', B', C', D'$ be vectors with $A \simeq A', B \simeq B', C \simeq C', D \simeq D'$ then it holds true:
   \[
       \sh((A,B;C,D)) = \sh((A',B',C',D'))
   \]
\end{theorem}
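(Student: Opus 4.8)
The plan is to lean on two facts already in hand: the cross-ratio is invariant under independent rescaling of each of its four arguments (Lemma ``Cross-Ratios and Representatives''), and the scalar shadow is a ring homomorphism on the limited numbers (Lemma ``Shadow Properties''). The strategy is to normalize first, then push the shadow through the four $2\times 2$ bracket determinants, and finally reassemble the quotient.

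First I would normalize. Since $A \simeq A'$ means the two objects are almost equivalent, the Appreciable Scaling Factor lemma yields an appreciable $\lambda_A \in \A$ with $A_\A \simeq \lambda_A A'_\A$, and likewise $\lambda_B,\lambda_C,\lambda_D \in \A$ for the remaining pairs. Using scale-invariance of the cross-ratio I replace $A,B,C,D$ by $A_\A,\dots,D_\A$ and replace $A',B',C',D'$ by $\lambda_A A'_\A,\dots,\lambda_D D'_\A$; neither substitution changes the value of the respective cross-ratio, because $\lambda_X X'_\A$ is just a nonzero rescaling of $X'$. After this reduction I am left with two quadruples of appreciable vectors that are pairwise infinitely close, namely $A_\A \simeq \lambda_A A'_\A$ and so on, all with limited components.

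Next I would pass each bracket to its shadow. Every determinant $[\,\cdot\,,\,\cdot\,]$ is a sum of products of appreciable entries, hence limited by \autoref{nsa_basics:arith}, and since the shadow commutes with sums and products on limited numbers we get $\sh([A_\A,C_\A]) = [\sh(A_\A),\sh(C_\A)] = [\sh(\lambda_A A'_\A),\sh(\lambda_C C'_\A)] = \sh([\lambda_A A'_\A,\lambda_C C'_\A])$, using $A_\A \simeq \lambda_A A'_\A$ and $C_\A \simeq \lambda_C C'_\A$ together with the fact that infinitely close limited numbers share a shadow. The same identity holds for each of the four brackets occurring in the cross-ratio, so the numerators and the denominators of the two cross-ratios have equal shadows. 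In the generic case where the denominator $[A_\A,D_\A][B_\A,C_\A]$ is appreciable -- equivalently neither $A \simeq D$ nor $B \simeq C$ -- its shadow is nonzero, and the quotient rule for shadows (part 3 of \autoref{nsa_basics:sh_prop}) gives $\sh((A,B;C,D)) = \sh(\text{num})/\sh(\text{den}) = \sh((A',B',C',D'))$ at once.

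The hard part will be the degenerate case where a denominator bracket is infinitesimal, so the quotient rule does not apply directly. Here I would use that being infinitesimal is itself a $\simeq$-invariant: a bracket $[A_\A,D_\A]$ lies in $\I$ exactly when $A_\A$ and $D_\A$ are almost equivalent (this is the two-dimensional analogue of the cross-product criterion in \autoref{nsa_pg:aeq_angles}), and since $A \simeq A'$ and $D \simeq D'$ this is equivalent to $[\lambda_A A'_\A,\lambda_D D'_\A] \in \I$. Hence the primed and unprimed cross-ratios degenerate in exactly the same pattern. The cleanest remedy is then to read the cross-ratio projectively, as the homogeneous pair $\bigl([A,C][B,D],\,[A,D][B,C]\bigr) \in \CsP^1$, and replace $\sh$ by the projective shadow $\psh$: the coordinatewise equalities of bracket shadows established above force $\psh$ of the two pairs to coincide, which recovers the stated identity wherever the scalar shadow is defined and at the same time covers the limiting values.
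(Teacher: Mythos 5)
Your generic-case argument is sound and is essentially a fully worked-out version of the paper's own proof, which consists of the single sentence ``By the continuity of the determinant'': you normalize to appreciable representatives, observe that each bracket is a limited sum of products so that $\sh$ commutes with it, and finish with the quotient rule for shadows when the denominator bracket product has nonzero shadow. That part is correct, and it makes explicit a reduction (to appreciable representatives with appreciable scaling factors) that the paper skips entirely.

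The gap is in your degenerate case. When the numerator and denominator brackets are all infinitesimal, knowing that the corresponding bracket shadows agree tells you only that both homogeneous pairs have shadow $(0,0)$, and that does \emph{not} determine $\psh$ of the pair --- this is precisely the trap the paper itself warns about in the remark following \autoref{nsa_pg:psh}. Concretely, take $\epsilon \in \I \Smz$ and
\[
A=A'=\begin{pmatrix}1\\0\end{pmatrix},\quad B=B'=\begin{pmatrix}0\\1\end{pmatrix},\quad D=D'=\begin{pmatrix}1\\\epsilon\end{pmatrix},\quad C=\begin{pmatrix}1\\\epsilon\end{pmatrix},\quad C'=\begin{pmatrix}1\\2\epsilon\end{pmatrix}.
\]
All hypotheses hold and every vector is appreciable, yet $(A,B;C,D)=\frac{\epsilon\cdot(-1)}{\epsilon\cdot(-1)}=1$ while $(A',B';C',D')=\frac{2\epsilon\cdot(-1)}{\epsilon\cdot(-1)}=2$; the homogeneous pairs $(-\epsilon,-\epsilon)$ and $(-2\epsilon,-\epsilon)$ have projective shadows $[1:1]\neq[2:1]$, so the proposed projective reading does not rescue the claim. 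In other words, the theorem is simply false without a nondegeneracy hypothesis (e.g.\ that the four points are pairwise not almost equivalent, so that all four brackets are appreciable and your first argument applies). To be fair, the paper's one-line proof silently assumes exactly this generic situation; your write-up is the only one of the two that notices the degenerate case exists, but the patch you propose does not close it.
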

\begin{proof}
    By the continuity of the determinant.
\end{proof}
\begin{remark}
    One has to be careful with the relation above: it does \textit{not} hold true that 
    \[
(A,B;C,D)) \simeq ((A',B',C',D').
        \]
        Take for example three disjoint points $A,B,C$ and consider the cross-ratio $R:=(A,B;C,D)$ for $D=A$. The value $R$ of this cross-ratio is $\infty$ since we divide by zero. If one now takes $D \simeq A, D \neq A$ then the cross-ratio $r$ will be unlimited, but $R \not \simeq \infty$.
\end{remark}
\section{Non-standard Conics}
\begin{definition}[Non-standard Conic]
    \label{nsa_pg:nst_conic}
    For a given matrix $M \in \Cs^{3 \times 3}$ whose entries are not all equal to zero. We define a \textit{non-standard conic with associated matrix $\mathit{{M}}$} by 
    \[
        \mathcal C_{M} := \{ \, [p] \in  \mathcal{P}_\Cs \bbar p_\A^T M_\A p_\A  \, \simeq  \, 0 \}
    \]
    where $p_\A$ denotes an appreciable representative of $p$ and $M_\A$ is an appreciable matrix representation of $M$.
\end{definition}
\begin{definition}[Point and Conic Almost Incidence Relation]
    \label{nsa_pg:pc_inci}
    Let a point $ [p] \in  \mathcal{P}_\Cs$ and a non--standard conic $\mathcal C_{M}$ with associated matrix $\mathit{{M}}$ fulfill the relation of \autoref{nsa_pg:nst_conic}: $ p_\A^T M_\A p_\A  \, \simeq  \, 0$. Then we call $p$ and $\mathcal C_{M}$ \textit{almost incident} and write 
    \[
    [p] \, \mathcal{I}_\Cs \, [\mathcal C_M]    
    \]
\end{definition}
\begin{remark}
    Of course the standard incidence relation of a point and a conic is a special case of the almost incidence relation due to the fact that $0$ is infinitesimal (and also the only infinitesimal value in $\C$). 
\end{remark}
\begin{lemma}[Well-defined Conic]
    The almost incidence relation of points and conics in \autoref{nsa_pg:pc_inci} is well defined.
\end{lemma}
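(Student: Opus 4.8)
The plan is to show that the truth of the relation $p_\A^T M_\A p_\A \simeq 0$ is unchanged when $p_\A$ and $M_\A$ are replaced by any other appreciable representatives $\hat p_\A$ and $\hat M_\A$ of $p$ and $M$. This mirrors the argument for the scalar product in \autoref{nsa_pg:i_well_def}; the only new features are that the point representative enters the form twice and that we must also absorb the freedom in choosing the matrix representative.

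First I would record how two appreciable representatives are related. For the point, \autoref{nsa_pg:diff_repr} supplies a unique $c \in \A$ with $\hat p_\A = c\, p_\A$. For the matrix there is no separate lemma, so I would derive the analogue directly: writing $M_\A = \frac 1 \lambda M$ and $\hat M_\A = \frac 1 \mu M$ with $\lambda, \mu \in \magni(\norm M)$, one has $\hat M_\A = \frac \lambda \mu M_\A$, and $\frac \lambda \mu \in \A$ because both $\lambda,\mu$ lie in $\magni(\norm M)$, so that $\norm M = A_1 \lambda = A_2 \mu$ for some $A_1, A_2 \in \A$ and hence $\frac \lambda \mu = \frac{A_2}{A_1} \in \A$. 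Put $d := \frac \lambda \mu$.

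The computation is then immediate. Since the conic form uses the transpose and not the conjugate transpose, it is bilinear, so
\[
    \hat p_\A^T \hat M_\A \hat p_\A = (c\, p_\A)^T (d\, M_\A)(c\, p_\A) = c^2 d \,\bigl(p_\A^T M_\A p_\A\bigr).
\]
By \autoref{nsa_basics:arith} the factor $k := c^2 d$ is a product of appreciable numbers, hence appreciable; in particular $k \neq 0$ and $k^{-1} \in \A$. Because an appreciable multiple of an infinitesimal is again infinitesimal, and $p_\A^T M_\A p_\A = k^{-1}\bigl(\hat p_\A^T \hat M_\A \hat p_\A\bigr)$ with $k^{-1} \in \A$, I would conclude $p_\A^T M_\A p_\A \in \I \Leftrightarrow \hat p_\A^T \hat M_\A \hat p_\A \in \I$, i.e. the relation $\simeq 0$ is independent of the chosen representatives.

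I do not expect a serious obstacle: the content is routine once the two representative-comparison facts are in place. The one point needing care is that $k$ must be appreciable rather than merely limited, so that the implication runs in both directions — this is precisely why we divide the matrix by the magnitude of its norm and the point by its own, ensuring neither rescaling is infinitesimal or unlimited. For completeness I would also note that $p_\A^T M_\A p_\A$ is limited by \autoref{nsa_pg:appr_mv}, so the relation $\simeq 0$ is meaningful to begin with.
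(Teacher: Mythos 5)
Your proof is correct, but it takes a different route from the paper's. The paper reduces the claim to the well-definedness of the appreciable scalar product (\autoref{nsa_pg:i_well_def}) by introducing the polar $l_\A := M_\A p_\A$ and reading $p_\A^T M_\A p_\A$ as $\langle p_\A, l_\A\rangle$; you instead compare two arbitrary choices of representatives directly and track the resulting scalar $k = c^2 d \in \A$ through the bilinear form. The paper's route is shorter and reuses existing machinery, but it has two soft spots that your argument avoids: first, \autoref{nsa_pg:appr_mv} only guarantees that $M_\A p_\A$ is \emph{limited}, not appreciable (and for an almost singular or singular $M$ it can indeed be infinitesimal), so treating $l_\A$ as an appreciable representative of a line in order to invoke \autoref{nsa_pg:i_well_def} requires an extra word; second, $p_\A^T M_\A p_\A$ uses the transpose rather than the conjugate transpose, so identifying it with the Hermitian scalar product is only literally correct over $\Rs$ — a point you explicitly flag by noting the form is bilinear. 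Your derivation of the matrix analogue of \autoref{nsa_pg:diff_repr} (that two appreciable matrix representatives differ by $\lambda/\mu \in \A$) is the one genuinely new ingredient and is carried out correctly. In short: same conclusion, but your computation is more self-contained and slightly more careful about the edge cases.
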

\begin{proof}
   We have to show that the relation $ p_\A^T M_\A p_\A  \, \simeq  \, 0$ is well defined. Define $l_\A:= M_\A p_\A$, then  
   \[
        p_\A^T M_\A p_\A  = \langle p_\A, M_\A p_\A \rangle = \langle p_\A, l_\A \rangle 
   \]
   We can interpret $l_\A$ as element of $\mathcal{L}_\Cs$ (the polar) which is appreciable by \autoref{nsa_pg:appr_mv}. Use \autoref{nsa_pg:i_well_def} where we showed that the property of being infinitesimal in the appreciable scalar product is well defined. This is the claim.
\end{proof}
\begin{lemma}[Conics and Projective Halo]
   Let $\mathcal C$ be a non-standard conic and $p \in \CP^2$ be incident to $C$. Then a point $p'$ which almost equivalent $p' \simeq p$ is almost incident to $\mathcal C$.

   In other words: the projective halo $\phal(p)$ is almost incident to $C$.
\end{lemma}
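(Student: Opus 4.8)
The plan is to reduce the almost incidence of $p'$ to that of $p$ by expanding the quadratic form $(\,\cdot\,)^T M_\A (\,\cdot\,)$ and controlling each resulting term with the hyperreal arithmetic of \autoref{nsa_basics:arith}. First I would unwind the definitions: since $p$ is (almost) incident to $\mathcal C = \mathcal C_M$, \autoref{nsa_pg:pc_inci} together with \autoref{nsa_pg:nst_conic} gives $p_\A^T M_\A p_\A \simeq 0$, and the goal is to establish the analogous relation $(p'_\A)^T M_\A p'_\A \simeq 0$ for $p'$.

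Next, since $p' \simeq p$, the Appreciable Scaling Factor lemma furnishes an appreciable $\lambda \in \A$ with $p'_\A \simeq \lambda\, p_\A$; writing $p'_\A = \lambda\, p_\A + \eta$, where $\eta$ is an infinitesimal vector (every component in $\I$), I would expand the form by bilinearity:
\[
(p'_\A)^T M_\A p'_\A = \lambda^2\, p_\A^T M_\A p_\A + \lambda\, p_\A^T M_\A \eta + \lambda\, \eta^T M_\A p_\A + \eta^T M_\A \eta.
\]
The heart of the argument is then a term-by-term estimate, using \autoref{nsa_basics:arith} and the fact (established above for appreciable matrices) that $M_\A$ has only limited entries. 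In the first summand $\lambda^2$ is appreciable and $p_\A^T M_\A p_\A \simeq 0$ is infinitesimal, so the product is infinitesimal. In each of the two middle summands every factor drawn from $\lambda$, $p_\A$ and $M_\A$ is limited while $\eta$ is infinitesimal, so these finite sums of products are infinitesimal; the last summand $\eta^T M_\A \eta$ is infinitesimal a fortiori. Since a finite sum of infinitesimals is infinitesimal, I conclude $(p'_\A)^T M_\A p'_\A \simeq 0$, i.e.\ $p'$ is almost incident to $\mathcal C$; as $p'$ ranges over all of $\phal(p)$, the whole projective halo is almost incident to $\mathcal C$.

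The only genuine subtleties, rather than true obstacles, are bookkeeping ones. I must carry out the estimate on fixed appreciable representatives, and then appeal to the well-definedness of the point--conic almost incidence relation to see that the conclusion is independent of this choice. It is also worth noting that the quadratic form uses the ordinary transpose rather than the conjugate transpose, so the expansion is a genuinely bilinear (not sesquilinear) identity: no conjugation of $\lambda$ or $\eta$ enters, and each estimate goes through verbatim.
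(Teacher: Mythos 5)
Your proof is correct and follows essentially the same route as the paper's: obtain an appreciable scaling factor $\lambda$ from almost equivalence and transfer the infinitesimal value of the quadratic form from $p$ to $p'$. In fact you are more careful than the paper, whose single step $0 = p^T M_\A p \simeq \lambda^2\, p'^T M_\A p'$ silently relies on exactly the bilinear expansion with an infinitesimal error vector $\eta$ and the term-by-term estimates via \autoref{nsa_basics:arith} that you spell out.
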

\begin{proof}
    Let $M \in \Cs^{3 \times 3}$ be the associated matrix to $C$. Since $p'$ is almost equivalent to $p$ there is $\lambda \in \A$ such that $p_\A = \lambda \cdot p'_\A$ and due to the incidence of $p$ and $\mathcal C$ it holds true:
    \[
        0 = p^T M_\A p \simeq \lambda^2 p'^T M_\A p'.    \]
    Since $\lambda^2 \in \A$ it holds true that $ p'^T M_\A p' \in \I$ and thus $  p' \, \mathcal{I}_\Cs \, \mathcal C$.
\end{proof}
We will now analyze cocircularity and therefore we need to define two special points of $\CP^2$.
\begin{definition}[The Points $\II$ and $\JJ$, \cite{richter2011perspectives} \page 330]
   We define the points $\II$ and $\JJ$ in $\CP^2$ by
   \[
       \II := \begin{pmatrix}
          -\ii \\ 1 \\ 0 
       \end{pmatrix}
       \quad \text{and} \quad
       \JJ := \begin{pmatrix}
          \ii \\ 1 \\ 0 
       \end{pmatrix}.
   \]
\end{definition}
\begin{remark}
As proven in ``Perspectives on Projective Geometry'' (\cite{richter2011perspectives} \page 330 ff.) the points $\II$ and $\JJ$ are incident to all circles and a conic section is a circle if it passes through $\II$ and $\JJ$.
\end{remark}
\begin{definition}[Almost Cocircular]
    We call four points $A,B,C,D \in \RsP^2$ \textit{almost cocircular}, if $\JJ$ is almost incident to the conic section defined by $A,B,C,D,\II$.
\end{definition}
\begin{theorem}[Cocircluarity]
   Four points $A,B,C,D \in \RsP^2$ are almost cocircular, if and only if the following relations holds true:
   \[
       [CA\II][DB\II][DA\JJ][CB\JJ] - [CA\JJ][DB\JJ][DA\II][CB\II] \simeq 0.
   \]
\end{theorem}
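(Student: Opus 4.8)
The plan is to reduce the statement to the standard cross-ratio characterization of conics and then bridge from exact to ``almost'' incidence by means of the shadow calculus. First I would rewrite the bracket polynomial into a more transparent form. Using only the antisymmetry of the determinant -- each of the eight brackets differs from the corresponding bracket $[\II\,\cdot\,\cdot]$ or $[\JJ\,\cdot\,\cdot]$ by a single transposition, and the four sign flips in each product cancel -- one checks that
\[
  [CA\II][DB\II][DA\JJ][CB\JJ]-[CA\JJ][DB\JJ][DA\II][CB\II]
  =[\II AC][\II BD][\JJ AD][\JJ BC]-[\JJ AC][\JJ BD][\II AD][\II BC].
\]
Dividing by the product $[\II AD][\II BC][\JJ AD][\JJ BC]$ exhibits this as
\[
  \Big( (A,B;C,D)_\II-(A,B;C,D)_\JJ \Big)\cdot [\II AD][\II BC][\JJ AD][\JJ BC],
\]
where $(A,B;C,D)_P:=\tfrac{[PAC][PBD]}{[PAD][PBC]}$ is the cross-ratio of the four points $A,B,C,D$ seen (as a pencil of lines) from $P$. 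Thus the polynomial is, up to the factor formed by the four brackets, the difference of the two cross-ratios measured from $\II$ and from $\JJ$.

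Next I would invoke the standard theory. By the classical conic/cross-ratio theorem (\cite{richter2011perspectives}, together with the characterization of circles via $\II,\JJ$), six points $A,B,C,D,\II,\JJ$ of $\CP^2$ lie on a common conic if and only if $(A,B;C,D)_\II=(A,B;C,D)_\JJ$, i.e.\ if and only if the bracket polynomial vanishes. This is a purely algebraic equivalence, so by universal transfer (\autoref{nsa_basics:transfer}) it remains valid verbatim over $\Rs$: for hyperreal points the six points are exactly co-conical iff the polynomial is exactly $0$. In particular $\JJ$ lies exactly on the conic through $A,B,C,D,\II$ iff the two cross-ratios coincide.

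To get the ``almost'' version I would pass to appreciable representatives of $A,B,C,D$ (the points $\II,\JJ$ are already appreciable standard points) and apply the shadow lemmas. By definition $A,B,C,D$ are almost cocircular iff $\JJ$ is almost incident to the conic through the other five, which -- unwinding the definition of conic incidence as the infinitesimality of $\JJ_\A^T M_\A \JJ_\A$ -- is equivalent, via \autoref{nsa_pg:appr_mv} and \autoref{nsa_pg:scalar_sh}, to the defining value of that conic at $\JJ$ having vanishing shadow. Writing that value through the transferred five-point conic construction as a bracket expression, and using that the shadow is additive and multiplicative on limited numbers (\autoref{nsa_basics:sh_prop}) while all brackets in play are limited (determinants of appreciable vectors), one shows that the shadow of the incidence value vanishes exactly when $\sh$ of the displayed bracket polynomial vanishes, i.e.\ when the polynomial is infinitesimal.

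The main obstacle is the normalization: controlling the factor $[\II AD][\II BC][\JJ AD][\JJ BC]$ and the proportionality constant relating the conic's value at $\JJ$ to the bracket polynomial. These determinants are automatically limited, but to conclude that ``polynomial $\simeq 0$'' is genuinely equivalent to ``the cross-ratios are infinitely close'' one needs this factor to be appreciable, which holds precisely when $A,B,C,D$ are in almost general position with respect to $\II,\JJ$ (no relevant triple almost collinear, the points pairwise not almost equivalent). Under this hypothesis the equivalence follows immediately from the shadow calculus; the degenerate configurations, where the five-point conic itself degenerates or the factor becomes infinitesimal, have to be treated separately, and this case analysis is where the real work lies.
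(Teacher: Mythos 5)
Your route is the same one the paper takes: the paper's entire proof is the remark that one repeats the argument of ``Perspectives on Projective Geometry'' with $=$ replaced by $\simeq$, and your reduction of the bracket polynomial to $\bigl((A,B;C,D)_\II-(A,B;C,D)_\JJ\bigr)\cdot[\II AD][\II BC][\JJ AD][\JJ BC]$ is exactly the algebraic core of that argument, made explicit and combined with transfer and the shadow calculus. Your sign bookkeeping is correct, and your insistence on appreciable representatives addresses a point the statement itself leaves implicit (the infinitesimality of the bracket polynomial is not invariant under rescaling of representatives).

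However, the obstacle you flag in your last paragraph is not merely ``where the real work lies'' --- it is a genuine gap, and the ``if'' direction is in fact false without a general-position hypothesis, so the deferred case analysis cannot be completed along your lines. Concretely, take appreciable representatives $A=(0,0,1)^T$, $B=(1,0,1)^T$, $C=(0,1,1)^T$, $D=(\epsilon,\epsilon,1)^T$ with $\epsilon\in\I_\R\Smz$. Then $[DA\II]=-\epsilon(1+\ii)$ and $[DA\JJ]=-\epsilon(1-\ii)$ are infinitesimal, so each of the two products contains an infinitesimal factor and the whole bracket expression is $\simeq 0$. But the conic through $A,B,C,D,\II$ has as its shadow the member of the pencil through $A,B,C,\II$ tangent at $A$ to the line $x=y$; a circle tangent to that line at the origin cannot pass through both $B$ and $C$, so the shadow conic is not a circle, hence does not contain $\JJ$, hence $\JJ^T M_\A\JJ\not\simeq 0$ and the points are not almost cocircular. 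This is precisely the degeneration your own analysis predicts when the factor $[\II AD][\II BC][\JJ AD][\JJ BC]$ becomes infinitesimal. The correct conclusion is that the theorem (and the paper's one-line proof) needs the additional hypothesis that no two of $A,B,C,D$ are almost equivalent and no relevant triple among $A,B,C,D,\II,\JJ$ is almost collinear; under that hypothesis your argument goes through, and you should state the hypothesis rather than defer the degenerate cases.
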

\begin{proof}
    Essentially the same proof as in \cite{richter2011perspectives} page~331 ff.\ if one replaces $=$ with $\simeq$.
\end{proof}
\begin{theorem}[Almost Cocircular and Almost Affine Transformations]
    An almost affine projective transformation preserves the almost cocircular property and all non-singular projective transformations that leave $\II'$ and $\JJ'$, with $\II \simeq \II'$ and $\JJ' \simeq \JJ$, almost invariant (\ie $I' \simeq M\II', \JJ' \simeq M\JJ'$) are almost affine.
\end{theorem}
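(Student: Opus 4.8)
The plan is to route both halves of the statement through the projective shadow and reduce them to the classical fact that a real projective transformation fixes $\II$ and $\JJ$ projectively \emph{if and only if} it is a direct similarity, together with the standard statement that similarities preserve cocircularity (\cite{richter2011perspectives}). The bridge is that, for appreciable representatives, almost cocircularity of $A,B,C,D$ is equivalent to ordinary cocircularity of the shadows $\psh(A),\ldots,\psh(D)$: the bracket polynomial $F$ in the cocircularity criterion is a sum of products of limited entries, so by \autoref{nsa_basics:sh_prop} one has $\sh(F)=F(\psh(A),\ldots,\psh(D))$, and $F\simeq 0$ means exactly $\sh(F)=0$. I would also record at the outset that an almost affine $M$ almost fixes $\II$ and $\JJ$: with $w:=c+\ii s$ the matrix form of \autoref{nsa_pg:def:almost_affine} gives $M_\A\II=w\,\II+(0,0,\delta-\ii\epsilon)^T$, and since $c^2+s^2=\abs{w}^2\not\in\I$ makes $w$ appreciable while the last entry is infinitesimal, $M\II\simeq\II$ and likewise $M\JJ\simeq\JJ$. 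The same expansion shows $\det(M_\A)\simeq c^2+s^2\not\in\I$ is appreciable, so an almost affine transformation is automatically non-singular.

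For the forward implication I would take shadows of the transformed points. By \autoref{nsa_pg:appr_image} the vector $M_\A A_\A$ is limited, so \autoref{nsa_basics:sh_prop} yields $\sh(M_\A A_\A)=\sh(M_\A)\,\sh(A_\A)$ and hence $\psh(MA)=\bar M\,\psh(A)$ with $\bar M:=\sh(M_\A)$. Letting the infinitesimals $\epsilon,\delta$ vanish, $\bar M$ has bottom row $(0,0,1)$ and an exactly conformal top-left block with nonzero $(\sh c)^2+(\sh s)^2$, \ie $\bar M$ is a genuine direct similarity. Since similarities preserve cocircularity, the shadows $\psh(A),\ldots,\psh(D)$ are cocircular exactly when $\bar M\psh(A),\ldots=\psh(MA),\ldots$ are; translating both sides back through the bridge of the first paragraph shows that $A,B,C,D$ are almost cocircular if and only if $MA,MB,MC,MD$ are.

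For the converse I would first push the almost invariance onto $\II,\JJ$ themselves: from $\II\simeq\II'\simeq M\II'$ and the preservation of almost equivalence under non-singular transformations (the theorem ``Non-singular Projective Transformations and Almost Equivalent''), one gets $M\II\simeq\II$, and symmetrically $M\JJ\simeq\JJ$. Taking shadows as in the previous paragraph turns this into the statement that the \emph{real} matrix $\bar M$ fixes both $\II$ and $\JJ$ projectively. An elementary computation with $\bar M(-\ii,1,0)^T=\mu(-\ii,1,0)^T$ forces the bottom row of $\bar M$ to be $(0,0,z_0)$ with $z_0\neq 0$ (nonzero since $M$ is non-singular) and the top-left block to be exactly conformal; rescaling $M_\A$ to normalize the bottom-right entry to $1$ then exhibits an appreciable representative with bottom row $(\epsilon,\delta,1)$, $\epsilon,\delta\in\I$, and top-left block infinitely close to $\left(\begin{smallmatrix}c&s\\-s&c\end{smallmatrix}\right)$.

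The main obstacle is precisely this last reconciliation: \autoref{nsa_pg:def:almost_affine} demands an \emph{exactly} conformal top-left block (equal diagonal entries, negated off-diagonals), whereas the shadow argument only yields a block conformal up to an infinitesimal perturbation, and rescaling, which preserves entry ratios, cannot cancel a genuine infinitesimal mismatch between the two diagonal entries. The robust content is therefore ``$M$ is infinitely close to a direct similarity''; to land exactly on the stated definition one must either read it up to infinitesimal perturbation of the top block, or argue that the residual infinitesimal deviations can be folded into the free parameters $c,s,a,b$ alongside the bottom-row slack $\epsilon,\delta$. This bookkeeping is the one delicate point; everything else reduces to continuity of the bracket polynomial and the standard classification of circle-preserving maps, both of which are routine.
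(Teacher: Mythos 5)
Your route is genuinely different from the paper's. The paper works entirely inside the non-standard arithmetic: for the forward direction it expands $M_\A \II'$ with $\II' = \lambda \II + \Tau$, $\Tau \in \I^3$, and reads off that the image is $(c+\ii s)\lambda\,\II$ plus an infinitesimal last entry, hence almost equivalent to $\II$ (likewise for $\JJ$); for the converse it computes directly that the bottom-left entries $x,y$ of $M_\A$ satisfy $-\ii x + y \simeq 0$, hence are infinitesimal because they are real and limited, and defers the block structure to the standard argument in Richter-Gebert. You instead push everything through $\psh$ and reduce to the classical classification of $\{\II,\JJ\}$-fixing maps as similarities. What your approach buys is an explicit link between ``$M$ almost fixes $\II,\JJ$'' and ``$M$ preserves almost cocircularity of $A,B,C,D$'' --- a step the paper's forward direction never actually supplies (its proof stops once $M\II' \simeq \II$ is established). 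The price is your bridge lemma ``$F \simeq 0$ iff the shadows are cocircular'': this needs the brackets evaluated on appreciable representatives and, strictly, a non-degeneracy caveat, since if two of the points are almost equivalent the bracket expression can be infinitesimal for spurious reasons, exactly as with the appreciable determinant and false-positive collinearity. That caveat deserves a sentence in a write-up.

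The obstacle you flag in the converse is real, and it is not an artifact of your method: the paper's proof has the same gap hidden in ``the rest of the argumentation is analogous.'' \autoref{nsa_pg:def:almost_affine} requires a representative whose top-left block is \emph{exactly} of the form $\left(\begin{smallmatrix} c & s \\ -s & c\end{smallmatrix}\right)$, but a non-singular $M$ that almost fixes $\II$ and $\JJ$ need only have such a block up to infinitesimal perturbation. Concretely, $M = \mathrm{diag}(1,\,1+\epsilon,\,1)$ with $0 \neq \epsilon \in \I$ satisfies $M\II \simeq \II$ and $M\JJ \simeq \JJ$ and is non-singular, yet no scalar multiple of $M$ has equal diagonal entries in the top block, so it is not almost affine in the stated sense. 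Your diagnosis is correct, and your observation that the mismatch cannot be folded into $c,s,a,b$ is also right, since each of those parameters is forced to appear in two entries simultaneously. The honest conclusion is the one you give: the converse proves ``$M$ is infinitely close to a direct similarity,'' and either the definition or the theorem statement must be weakened to match.
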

\begin{proof}
    By \autoref{nsa_pg:def:almost_affine} a almost affine projective transformation $M$ has an appreciable representative of the form
   \[
     M =  \begin{pmatrix}
           c & s & a \\ -s & c & b \\ \epsilon & \delta &1
       \end{pmatrix}
   \]
   with $\epsilon, \delta \in \I$ and $c^2 + s^2 \not \in \I$. Let $I'$ be infinitely close to $\II$ which means there is $\lambda \in \A$ and $\Tau:= (\tau_1, \tau_2, \tau_3)^T \in \I^3$ such that 
   \[
       \II \simeq \II' \Leftrightarrow \II' = \lambda \begin{pmatrix}
          -\ii \\ 1 \\ 0 
       \end{pmatrix} + \begin{pmatrix}
       \tau_1 \\ \tau_2 \\ \tau_3        \end{pmatrix}  
   \]
   By \autoref{nsa_pg:appr_image} we know that the product of $\lambda  M \cdot I$ is appreciable and the product of $ M \cdot \Tau$ is infinitesimal, so $\lambda M \cdot \II + M \cdot \Tau$ almost equivalent to $\lambda M \cdot \II$, then we find
\begin{align*}
    M \cdot \II' &=       \begin{pmatrix}
           c & s & a \\ -s & c & b \\ \epsilon & \delta &1
       \end{pmatrix}  \left(
\lambda\begin{pmatrix}           -\ii \\ 1 \\ 0 
       \end{pmatrix} + \begin{pmatrix}
   \tau_1 \\ \tau_2 \\ \tau_3        \end{pmatrix} \right)   
   \\ &\simeq \lambda \begin{pmatrix}
           c & s & a \\ -s & c & b \\ \epsilon & \delta &1
       \end{pmatrix}  \begin{pmatrix}           -\ii \\ 1 \\ 0 
       \end{pmatrix} = 
       \begin{pmatrix}
      -\ii \cdot c +s \\ \ii \cdot s + c   \\ -\epsilon \cdot \ii + \delta
  \end{pmatrix} \simeq  
       \begin{pmatrix}
       -\ii \cdot c +s \\ \ii \cdot s + c   \\ 0       \end{pmatrix} = (c+ \ii \cdot s)  \II \sim \II
\end{align*}
Analogously we can show the same property for $\JJ$.

Conversely let $M$ be a matrix with the property $\II \simeq M \cdot \II'$. We show the claim analogously to ``Perspectives on Projective Geometry'' \cite{richter2011perspectives} \page 336 ff. We start with the first two entries of the last row of $M_\A$: we claim that they are infinitesimal. Remember that all entries of an appreciable matrix are limited and at least one is appreciable.
\[
    \begin{pmatrix}
        \bullet & \bullet & \bullet \\
        \bullet & \bullet & \bullet \\
        x & y & \bullet
    \end{pmatrix}
    \left(
    \begin{pmatrix}
        -\ii  \\ 1 \\ 0
    \end{pmatrix}
    +
    \begin{pmatrix}
        \epsilon_x  \\ \epsilon_y \\ \epsilon_z
    \end{pmatrix}
    \right)
    \simeq
    \lambda
    \begin{pmatrix}
        -\ii  \\ 1 \\ 0
    \end{pmatrix}
\]
This means that $-\ii x + y \simeq 0$ but since $x,y $ are real and limited this means that both entries have to be infinitesimal. The rest of the argumentation is analogously to the one in the mentioned source.
\end{proof}

\printbibliography
\end{document}